\documentclass[11pt,letterpaper]{amsart}
\usepackage[margin=0.75in]{geometry}
\usepackage{amsmath,amsfonts,amssymb} 
\usepackage{algpseudocode}
\usepackage{algorithm}
\usepackage{seqsplit}
\usepackage{subcaption}
\usepackage{lipsum}
\usepackage{numprint}
\usepackage{bigintcalc}
\usepackage{mathtools}
\usepackage{mathrsfs}
\usepackage{graphicx}
\usepackage{xcolor}
\usepackage{titling}
\newcommand{\headers}[2]{} 
\usepackage{amsthm} 
\newtheorem{theorem}{Theorem}[section]
\newtheorem{corollary}[theorem]{Corollary}
\newtheorem{definition}[theorem]{Definition}
\newtheorem{conjecture}[theorem]{Conjecture}
\newtheorem{lemma}[theorem]{Lemma}

\newcommand{\newsiamremark}[2]{} 
\newcommand{\eps}{\varepsilon}
\definecolor{myblue}{rgb}{0,0,1}
\definecolor{mypurple}{rgb}{0.5,0,0.5}
\usepackage{hyperref}
\hypersetup{
    colorlinks=true,
    linkcolor=myblue,
    citecolor=mypurple,
    urlcolor=cyan
}

\newcommand{\ignore}[1]{}

\algnewcommand{\LineComment}[1]{\State \(\triangleright\) #1}

\def\R{\mathbb{R}}
\def\Q{\mathbb{Q}}

\def\Z{\mathbb{Z}}

\def\S[#1]{S^{#1}}

\def\ra{\rightarrow}

\def\sgn{\mathrm{sgn \, }}

\def\R{\mathbb{R}}
\def\Q{\mathbb{Q}}

\def\eps{\varepsilon}
\def\Z{\mathbb{Z}}

\begin{document}

  \begin{center}
    \vspace*{-1.0em}
    {\Large \bfseries Variants on the $abc$-Conjecture using Alternative Quality Metrics$^{1}$}\\
    \vspace{0.8em}
    
    {\large Akilan Sankaran$^{2}$}\\
    \vspace{0.3em}
    {\small Harvard University, Cambridge, MA 02138}\\
    \vspace{0.6em}
    {\small \today}\\
    \vspace{1.2em} 

    {\normalsize\bfseries\scshape {ABSTRACT}}
    \vspace{0.4em}
    
    \begin{minipage}{0.92\textwidth}
      \small
      The $abc$-conjecture (Masser and Oesterl\'e) has remained open for decades. By measuring $abc$-triples using a particular \textit{quality metric}, the conjecture seeks the asymptotic distribution of triples of sufficient quality. We create new classes of quality metrics to develop variants on the $abc$-conjecture, with each metric based upon the doubly geometric mean of the prime factors of triples. We investigate the behavior of the resulting class of quality metrics; by determining families of triples that yield high quality, we establish several asymptotic results that are analogous to the $abc$-conjecture for our metrics. We also establish sharp phase transitions for the behavior of families of such quality metrics within specified parametrizations for smoothness of primes in $abc$-triples, using heuristics from the Szpiro ratio for associated Frey curves. Finally, we implement algorithms to determine triples with high qualities with sub-linear runtime, an asymptotic speedup over naïve approaches. Our analysis offers robust variations of, and connections to, the $abc$-conjecture that offer independent questions of analytical interest.
    \end{minipage}
    \vspace{2em} 
  \end{center}

%footnotes
\footnotetext[1]{May 2026 update to mss.\ includes Theorems \ref{T_Chen}–\ref{T_abc_eta}. Results in Sec. \ref{S_DGM_Quality} were formulated in 2023.}
\footnotetext[2]{Correspondence: \href{mailto:akilansankaran@college.harvard.edu}{akilansankaran@college.harvard.edu}. I am grateful for the guidance of Dr.\ David Metzler at Albuquerque Academy throughout this effort.}
\setcounter{footnote}{2}

\section{Introduction}

The $abc$-conjecture, introduced by Joseph Oesterl\'e \cite{oesterle1988} and David Masser \cite{masser1985} in the 1980s, has remained an active conjecture in number theory. The conjecture asserts an asymptotic result on what we denote the \textit{quality} $q(\cdot)$ of triples $(a, b, c)$ with $a + b = c$, for coprime $a, b, c$; such triples are denoted $abc$-triples. The significance of the $abc$-conjecture extends far beyond an isolated statement; a proof would immediately imply Fermat's Last Theorem for $n\ge 6$ \cite{wiles1995, waldschmidt2015}, the Fermat--Catalan and Beal conjectures, and Szpiro's conjecture on elliptic curves \cite{waldschmidt2015}, while also shedding light on the distribution of prime gaps. Outside pure mathematics, deeper understanding of prime structure underlies cryptographic primitives (e.g.\ RSA), efficient integer factorization (the General Number Field Sieve depends on the abundance of smooth numbers \cite{gnfs}), and even biological phenomena – such as the prime-numbered life cycles of cicadas. 

Several recent works have established theoretical results that provide insight towards the conjecture itself \cite{granville2002} and developed computer algorithms to find high-quality triples quickly \cite{desmit2022, nitaj1993-algorithms, nitaj1993-an}. Nitaj characterized several of the relevant methodologies for triple searches \cite{nitaj2022}; van der Horst introduced an algorithm using elliptic curves \cite{horst2010}; and Elkies illustrated connections to other unsolved problems in number theory \cite{elkies2007}. Given the intractable nature of determining high-quality triples under the standard \textit{quality metric} to evaluate the suitability of $abc$-triples as measured by the conjecture, a natural strategy is to pursue \emph{alternative} quality metrics that preserve the conjecture's qualitative content while being more tractable from analytic standpoints. This approach is the focus of the ensuing manuscript; we note that previous variants of the quality used in the $abc$-conjecture can be found in ratios involving conductors and discriminants of associated elliptic curves, as well as minor algebraic variations \cite{waldschmidt2015, horst2010, martin2016}. Our metrics, which are based on the Doubly Geometric Mean of the primes dividing $abc$ for each such $abc$-triple ($a, b, c$), assume a distinct approach by directly penalizing the number $\omega$ of distinct prime divisors of $abc$.

The main contributions of this work are as follows:
\begin{enumerate}
    \item We define several new quality metrics (Definitions \ref{D_DGM_Quality}, \ref{D_DGM_class}) in the same spirit as the standard quality for the $abc$-conjecture, which (i) penalize the number of distinct prime divisors of $abc$, and (ii) incorporate roundness of an $abc$-triple.
    \item We analyze all variant quality metrics by constructing families of high-quality triples and investigating asymptotic growth rates (Theorems \ref{T_2_3}, \ref{T_power_of_p_q}, \ref{T_3_4}.) We develop an analogue of the $abc$-conjecture (Theorem \ref{C_abc_DGM}) and prove it conditionally on the infinitude of twin primes or Mersenne primes (Theorem \ref{T_twin_DGM}), and \emph{unconditionally} through Chen's theorem (Theorem \ref{T_Chen_DGM}). 
    \item We also prove results on analogues of transition phases with respect to asymptotic behavior for parametrized families of quality metrics in Theorems \ref{T_phase_transition}, \ref{T_uncond_LB}, and \ref{T_cond_UB} – and develop connections to elliptic curves and the $abc$-conjecture itself in Theorems \ref{T-szpiro}, \ref{t-big}, \ref{T_abc_eta}, and \ref{T_qt_UB}.
    \item We show an asymptotic improvement on the brute force algorithm (Algorithm \ref{A_Naive}) via constructive enumerations of high-quality triples (Algorithms \ref{A_2_3}, \ref{A_cyclo}), enabling scalable, sublinear-time computation of high-quality triples under our alternative quality metrics.
\end{enumerate}

\section{Background}
\label{S_Background}

We first offer a brief description of the frame of reference from which we approach the $abc$-conjecture, as a manner of motivating our discussions in Section \ref{S_DGM_Quality}.

\iffalse
% [CUT: original §2.1 motivation chain: nonlinear Diophantine equation,
%  Fermat's Last Theorem statement, Fermat's Last Theorem for polynomials,
%  Mason-Stothers theorem, integer analogue, test quality definition,
%  Theorem \ref{T_test_quality} that test quality is unbounded.
%  This entire motivation chain is condensed to one paragraph below.]

\subsection{Motivation for Quality Metric}
We can motivate the idea of a quality metric of an $abc$-triple through Fermat's Last Theorem. Consider the nonlinear Diophantine equation
\begin{align}\label{E_Fermat_Equation} x^n + y^n = z^n \end{align}
where $x, y, z, n \in \mathbb{Z}$. [... full text retained in original; see \cite{wiles1995, granville2002, mason1984, stothers1981} for the chain Fermat $\to$ Mason-Stothers $\to$ integer analogue $\to$ quality ...]
\begin{theorem}[Fermat's Last Theorem]\label{T_FLT}\end{theorem}
\begin{theorem}[Mason-Stothers]\label{T_Mason}\end{theorem}
\begin{definition}[Test Quality]\label{D_Test_Quality}\end{definition}
\begin{theorem}\label{T_test_quality}\end{theorem}
\fi

\subsection{From Mason-Stothers to the Standard Quality}

The $abc$-conjecture arises as the integer analogue of the Mason--Stothers Theorem \cite{mason1984,stothers1981}. The latter result asserts that if there exist nonconstant polynomials $x(t),y(t),z(t)\in\mathbb{C}[t]$ sharing no common roots, such that $x(t)+y(t)=z(t)$, then $\max\{\deg x,\deg y,\deg z\}<|\{\mu\in\mathbb{C}:(xyz)(\mu)=0\}|$. The integer analogue of this conjecture thereby replaces the degree $\deg$ by the absolute value, and the count of distinct roots $\mu$ of $(xyz)(t)$ by the \textit{radical} of a given triple. The following definitions arise.
\begin{definition}[$abc$-triple]\label{D_abc_triple}
A triple $(a,b,c) \in (\Z^{+})^3$ is an \emph{$abc$-triple} if, and only if, $a+b=c$ and $\gcd(a,b)=\gcd(b,c)=\gcd(a,c)=1$.
\end{definition}
\begin{definition}[Radical]\label{D_radical}
The \emph{radical} of an integer $n\in\Z^{+}$ is given by $\mathrm{rad}(n) : =\prod_{p\mid n,\,p\in\mathbb{P}} \, p$.
\end{definition}
A first attempt at a quality metric is therefore of the form $q_{\text{test}}(a,b,c)=c/\mathrm{rad}(abc)$, which measures the \textit{roundness} of the triple $(a, b, c)$ as per the size of the largest entry, $c$. However, since the family $(1,2^{p(p-1)n}-1,2^{p(p-1)n})$ for $p\in\mathbb{P}$, $n\in\Z^{+}$ has $\mathrm{rad}(abc)<2c/p$ (by Fermat's Little Theorem), it follows that $q_{\text{test}}$ may become arbitrarily large. To produce a more analytically interesting metric, one might reduce the growth rate of the quality by introducing a logarithmic term in the numerator, developing the following.\footnote{We designate this quality as the \textit{standard quality} since it has been analyzed extensively and is the focal point of the conjecture itself \cite{waldschmidt2015, desmit2022, horst2010, martin2016}. We restrict to $abc$-triples (with $a,b,c$ coprime); otherwise triples of the form $(2^h,2^h,2^{h+1})$ have unbounded quality.}
\begin{definition}[\textbf{Standard Quality}]\label{D_standard_quality}
For an $abc$-triple $(a, b, c)$, the standard quality is
\[
    q_s(a, b, c) = \frac{\ln(c)}{\ln(\mathrm{rad}(abc))}.
\]
\end{definition}

To our knowledge, the tightest uniform bound on the asymptotics of the quality $q_s(\cdot)$ for large triples is the following.
\begin{theorem}[Stewart and Yu, 2001 \cite{stewart2001}]\label{T_Stewart}
There exists some fixed $C_0$ such that
$
c < \exp\bigl(C_0\sqrt[3]{\mathrm{rad}(abc)}\,(\ln \mathrm{rad}(abc))^{3}\bigr)
$
for all $abc$-triples.
\end{theorem}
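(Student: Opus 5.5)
This is Stewart and Yu's result \cite{stewart2001}, which the paper quotes rather than proves. The natural route is through Baker's theory of linear forms in logarithms, archimedean and $p$-adic, using Yu's $p$-adic estimates with explicit dependence on the prime $p$. Write $R=\mathrm{rad}(abc)$ and $n=\omega(abc)$.

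The first step is to express each valuation as a linear form. Take a prime $p\mid a$. Since $1-b/c=a/c$ and $p\nmid bc$, we get
\[
v_p(a)=v_p\!\left(\prod_{q\mid bc} q^{e_q}-1\right),
\]
where $b/c=\prod_{q\mid bc}q^{e_q}$ with exponents $|e_q|\le \log_2 c$. Yu's theorem bounds this valuation by
\[
C(n)\,\frac{p}{\ln p}\prod_{q\mid bc}\ln q\;\ln\ln c ,
\]
and the analogous bounds hold for $p\mid b$ and $p\mid c$. Summing $v_p(a)\ln p$ over $p\mid a$ gives
\[
\ln a\ll C(n)\,P_a\prod_{q\mid bc}\ln q\,\ln\ln c ,
\]
where $P_a$ is the largest prime of $a$, and similarly for $b$ and $c$. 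One of the three terms must be large: the archimedean form $\ln(c/b)$ is small when $a$ is small, so a complex Baker bound covers the remaining case. This lets us bound $\ln c$ by the contribution of whichever of $a,b,c$ we choose.

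Next, exploit the factorisation $R=\mathrm{rad}(a)\,\mathrm{rad}(b)\,\mathrm{rad}(c)$. Some factor, say $\mathrm{rad}(a)$, is at most $R^{1/3}$, so $P_a\le R^{1/3}$. We then want to run the argument from that member. Since $a,b,c$ have symmetric roles up to sign, the linear form in which the valuation of that member appears can be chosen freely. This is the source of the exponent $\tfrac13$.

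The main obstacle is the dependence on $n$. The constant $C(n)$ grows roughly like $n^{cn}$, and $\prod_{q\mid bc}\ln q$ can be as large as $(\ln R)^n$. Naively this gives $\exp(O(n\ln n+n\ln\ln R))$, which is $R^{o(1)}$ only because $n\ll \ln R/\ln\ln R$; but it still swamps the target $(\ln R)^3$. I would follow Stewart and Yu's refinement: use Yu's sharper estimate, where the product of logarithms enters essentially linearly in the parameters and the small primes are handled by grouping them. That is, split the primes dividing $bc$ at a threshold $T$, and bound each $p$-adic valuation either trivially (for $p\le T$, using $\ln c$ times the number of small primes) or by the linear-form estimate. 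Optimising $T$ should collapse the $n$-dependence to $(\ln R)^3$, which yields $\ln c\le C_0 R^{1/3}(\ln R)^3$. I expect this bookkeeping of the $n$-dependent constants to be where the real difficulty lies.
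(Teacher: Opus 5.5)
The paper does not prove this statement. It quotes Stewart--Yu and uses it only to remark that $q_s$ is bounded by a sublinear function of the radical, so your outline has to stand on its own. Its framework is the right one:
\begin{itemize}
\item for $p\mid a$, $v_p(a)=v_p(b/c-1)$ is a $p$-adic linear form in the primes of $bc$;
\item an archimedean Baker bound covers the case where the chosen member is the smallest;
\item the exponent $\tfrac13$ comes from the choice of member.
\end{itemize}
But it stops exactly at the step that produces the stated bound, and both concrete claims you make about that step are wrong.

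First, with $C(n)\approx n^{cn}$ and $\prod_{q\mid bc}\ln q\le(\ln R)^n$, the factor $\exp(O(n\ln n+n\ln\ln R))$ is $R^{O(1)}$, not $R^{o(1)}$, when $n\asymp \ln R/\ln\ln R$. Both $n\ln n$ and $n\ln\ln R$ are then of order $\ln R$. Even an $R^{o(1)}$ loss needs two things: a constant that is only single-exponential in the number of logarithms, and AM--GM in the form $\prod_q\ln q\le(\ln R/n)^n$. That yields $\ln c\le R^{1/3+o(1)}$, not $R^{1/3}(\ln R)^3$.

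Second, the threshold repair fails. If you bound the valuations at small primes trivially, each gives $v_p(\cdot)\ln p\le\ln c$, so their total can reach $\pi(T)\ln c$, which is useless as an upper bound for $\ln c$. If the split is meant for the bases $q\mid bc$ instead, none of them can be dropped from $b/c=\prod_{q\mid bc}q^{e_q}$. Merging the small ones into one base gives it height up to $\ln c$, which makes the estimate vacuous.

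The underlying problem is what happens after you use $P_a\le\mathrm{rad}(a)\le R^{1/3}$. The remaining factor $C^{n}\prod_{q\mid bc}\ln q$ depends only on the primes involved, and it is not $O((\ln R)^3)$ for every configuration. For example, if $a,b,c$ each have $m$ prime factors of size about $Q$, the factor is about $(C\ln Q)^{2m}$, while $\ln R\approx 3m\ln Q$. No choice of parameters fixes this; the factor has to be paid for by the primes themselves. In the example, $P(w)\approx Q$ while $\mathrm{rad}(w)\approx Q^m$, and your step discards exactly that slack.

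Here is one way to use it. Grant an estimate of the shape you wrote, but with constant $C^{k}$ for $k$ logarithms. Summing over $p\mid w$ gives $\sum_{p\mid w}p\le\omega(w)P(w)$, not $P(w)$. So for each $w\in\{a,b,c\}$ you get $\ln c\ll D_w\ln\ln c$, where $D_w=\omega(w)P(w)\prod_{q\nmid w}C\ln q$ (read $\omega(w)P(w)$ as $1$ if $w=1$; the archimedean bound handles the smallest member). Each prime of $abc$ is a base for exactly two of the three choices, so
\[
D_aD_bD_c\ll(\ln R)^3\,R\prod_{q\in M}(C\ln q)^2\prod_{q\notin M}\frac{(C\ln q)^2}{q},
\]
where $M=\{P(a),P(b),P(c)\}$. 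The last product is bounded by an absolute constant, because $(C\ln q)^2<q$ for all but finitely many primes. Hence $\min_w D_w\le(D_aD_bD_c)^{1/3}\ll R^{1/3}(\ln R)^3$, which gives $\ln c\ll R^{1/3}(\ln R)^4$. Reaching exactly $(\ln R)^3$ then depends on the finer form of Yu's estimate. This three-way comparison, together with single-exponential dependence on the number of logarithms, is the missing idea; it is not a matter of bookkeeping or of optimising a threshold.
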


Taking exponentials reveals the bound $q_s(a, b, c) \leq C_0 (\mathrm{rad}(abc))^{1/3} (\ln(\text{rad}(abc))^{1/3}$, which implies that the quality is bounded above by a sublinear function in the radical $\text{rad}(abc)$.

Only 241 triples with $q_s>1.4$ are known \cite{desmit2022}, with the highest quality obtained being $q_s\approx 1.6299$ (due to Ressyat, per continued fractions \cite{waldschmidt2015,horst2010}). See Table \ref{tab1} for a few such examples. We denote triples with quality $q_s>1$ to be \emph{$abc$-hits}; those with $q_s>1.4$ are \emph{high-quality}, per the classification given by the standard quality. We note, additionally, that the triple with fifth-highest known quality, $q_s$, is of the form $(7^3, 3^{10}, 2^{11} \cdot 29)$; this triple utilizes a remarkably smaller number of primes than the other highest-ranking triples, under the standard quality \cite{deweger1989}. This principle of prioritizing few primes in the coupled factorization of $abc$ motivates our alternative metrics in Section \ref{S_DGM_Quality}.

\iffalse
% [CUT: long-form abc-conjecture exposition, including
%  Theorem \ref{T_standard_infinite_values_bigger_than_1} (q_s>1 for infinitely
%  many triples), Corollary \ref{C_limsup_quality}, two alternate forms,
%  full Fermat's Last Theorem-from-abc proof (Conjecture \ref{C_strong_abc} and
%  consequent FLT theorem). Conjecture \ref{C_abc_conjecture} retained inline below.]
\fi

\subsection{The $abc$-Conjecture}
Having motivated the standard quality metric $q_s(a, b, c)$ itself, we now present the statement of the $abc$-conjecture.

\begin{conjecture}[$abc$-conjecture \cite{oesterle1988, masser1985}]\label{C_abc_conjecture}
For any real $\varepsilon > 0$, there exist only finitely many $abc$-triples $(a,b,c)$ with quality $q_s(a, b, c) > 1 + \varepsilon$. Equivalently, $\lim\sup_{c \rightarrow \infty} q_s(a, b, c) = 1$.
\end{conjecture}

Via the previously described family of $abc$-triples \newline $(1, 2^{p( p - 1)n} - 1, 2^{p(p - 1)n})$, for $p$ prime, it is known that the quality $q_s$ satisfies $q_s(a,b,c)>1$ for infinitely many such $abc$-triples (\cite{waldschmidt2015}). However, the $\limsup$ criterion remains open; additionally, it remains to be established that \newline $q_s(a, b, c) \leq 2$ over all such triples.  This bound – which corresponds to the $\eps = 1$ case in Conjecture \ref{C_abc_conjecture} above – is known to imply Fermat's Last Theorem for $n \ge 6$, for a primitive solution to the Diophantine equation $x_0^n+y_0^n=z_0^n$ would yield $q_s\ge n/3$, contradicting $q_s \leq 2$ \cite{wiles1995, waldschmidt2015}.

\begin{table*}[t]
    \centering
    \begin{tabular}{|c|c|c|c|c|}
    \hline
        $a$ & $b$ & $c$ & \textbf{Quality $q_s$} & \textbf{Creator} \\
    \hline
        $2$ & $3^{10} \cdot 109$ & $23^5$ & $1.6299$ & Ressyat \cite{waldschmidt2015}\\
        $11^2$ & $3^2 \cdot 5^6 \cdot 7^3$ & $2^{21} \cdot 23$ & $1.6260$ & de Weger \cite{deweger1989}\\
        $19 \cdot 1307$ & $7 \cdot 29^2 \cdot 31^8$ & $2^8 \cdot 3^{22} \cdot 5^4$ & 1.6235 & Browkin, Brzezinski \cite{browkin1994} \\
        $283$ & $5^{11} \cdot 13^2$ & $2^8 \cdot 3^8 \cdot 17^3$ & 1.5808 & Browkin, Brzezinski, Nitaj \cite{nitaj1993-algorithms, browkin1994} \\
        $1$ & $2 \cdot 3^7$ & $5^4 \cdot 7$ & 1.5679 & de Weger \cite{deweger1989} \\
        $7^3$ & $3^{10}$ & $2^{11} \cdot 29$ & 1.5471 & de Weger \cite{deweger1989}\\
        $7^2 \cdot 41^2 \cdot 311^3$ & $11^{16}\cdot 13^2 \cdot 79$ & $2\cdot 3^3\cdot 5^{23}\cdot 953$ & 1.5444 & Nitaj \cite{nitaj1993-algorithms} \\ \hline
    \end{tabular}
    \caption{Several of the known high-quality triples under the standard quality $q_s$ (see Definition \ref{D_standard_quality}).}
    \label{tab1}
\end{table*}

\iffalse
% [CUT: Original Section 2.4 (Properties of Standard Quality) with full
%  prime-factorization expansion of q_s leading to Equation \ref{eq1}
%  and \ref{GMsq}. Condensed below to retain only the final GM form,
%  which is what motivates the DGM quality.]
\fi
We may also view the standard quality metric as an effective geometric mean of the primes dividing $abc$, when each prime is provided with the corresponding weight.

For instance, we may write the prime factorization of $abc$ with distinct primes $p_1,\dots,p_\omega$, such that $\omega$ is the number of primes dividing $abc$. Manipulation yields
\begin{align}
\label{eq1}
    q_s(a, b, c) = \frac{\sum_{p_i  \mid c} e_i \ln(p_i)}{\sum_{i = 1}^{\omega} \ln(p_i)},
\end{align}
where we write $c = \prod p_i^{e_i}$ for all such primes $p_i$ dividing $c$. Recognizing the resulting $\omega$-th root as a geometric mean, we observe
\begin{align}
\label{GMsq}
    q_s(a, b, c) = \frac{\ln(c)}{\ln\!\bigl(\,\sqrt[\omega]{\prod_{i = 1}^{\omega} p_i^{\omega}}\,\bigr)} = \frac{\ln(c)}{\ln\!\bigl(\mathrm{GM}\{p_i^\omega\}\bigr)},
\end{align}
where $\mathrm{GM}(\cdot)$ denotes the geometric mean of the given entities. Thus, it follows that the standard quality is a comparison of the benchmark size measure $\ln(c)$ to the size measure provided by the geometric mean of the primes dividing $abc$, each weighted by the corresponding $\omega$. Thus, one might seek to define a quality metric that seeks to penalize the \textit{number} of primes much more harshly than their size, effectively selecting \textit{against} smoothness of $abc$-triples. This effort is the task of the following section.

\section{DGM Quality Metric}
\label{S_DGM_Quality}

\subsection{Motivation and Definition}

Observe, in Table \ref{tab1}, that many high-quality triples (for instance, the third highest-quality triple of $(19 \cdot 1307, 7 \cdot 29^2 \cdot 31^8, 2^8 \cdot 3^{22} \cdot 5^4)$) use significantly more primes than those of the first, fifth, and sixth highest quality triples. Notably, the characterization of the standard quality in Definition \ref{D_standard_quality} guarantees that quality metrics that may use a large number of primes, yet ensure that the radical $\text{rad}(abc)$ remains small through comparatively large exponents on primes of small magnitude, might be prioritized over triples that utilize a small number of (possibly large) primes. Thus, we seek a metric that aggressively privileges a small \textit{overall} number of primes within the factorization of $abc$.\footnote{For instance, the cardinality of the solution set to some Diophantine equations may be bounded specifically with respect to a function of the number of prime factors of a given integer in consideration. Explicitly, the Thue equation $F(x, y) = m$ (for a homogeneous form $F$, irreducible over $\Q$ with degree $d$ greater than $3$) has solutions bounded by $O(d^{\omega(m) + 1})$, where $\omega(m)$ is the number of primes in the factorization of $m$. If $m$ represents $abc$ in our case, an analogy may be drawn.}

Our solution invokes the doubly geometric mean of such primes dividing $abc$ in each $abc$-triple. As motivation, consider the sequence of means with progressively stronger \textit{taming} of large values; that is, if we write the sequence of means as
\begin{align*}
\mathrm{AM}(x_i) &:= \tfrac{1}{k}\sum_{i = 1}^{k} x_i,\\
\mathrm{GM}(x_i) &:= \exp\bigl(\mathrm{AM}\{\ln x_i\}\bigr),\\
\mathrm{DGM}(x_i) &:= \exp\bigl(\exp(\mathrm{AM}\{\ln\ln x_i\})\bigr),
\end{align*} 
then each step progressively adds an inner logarithmic factor and an outer exponential, which rescales such that all three means agree when the $x_i$ are all identical. For instance, on the set $\{2,3,5,7,1009\}$, the arithmetic mean $205.2$ is dominated by the single large prime $1009$ (for removing it drops the AM to $4.25$), while the GM is $24$ and the DGM is $5.04 \approx 5$. Thus, the DGM rewards triples that avoid many primes in their factorizations with lower prioritization of size, which isolates $\omega$ (the number of primes dividing $abc$) as the dominant parameter, to a greater extent than the partial role of $\omega$ in the standard quality metric \cite{deweger1989}. Explicitly, we have the following.
\begin{definition}[Doubly Geometric Mean]\label{D_DGM_Mean}
For a set $S=\{x_1,\dots,x_k\}\subset\mathbb{R}_{>0}$, we evaluate
\[
    \mathrm{DGM}(S) := \exp\Big(\exp\Big(\tfrac{1}{k} \sum_{i = 1}^{k} \ln\ln(x_i)\Big)\Big).
\]
\end{definition}
Using the DGM metric, the resulting \textit{DGM Quality} may be defined in the natural extension of equation (\ref{GMsq}).
\begin{definition}[DGM Quality]\label{D_DGM_Quality}
Over all $(a,b,c) \in (\Z^{+})^3$ an $abc$-triple, and $\omega$ being the number of primes dividing $abc$, with $P_\omega=\{p^\omega:p\in \mathbb{P}, p \mid abc \}$, we define the quality $q_{DGM}: (\Z^{+})^3 \mapsto \R^{+}$ by
\[
    q_{\mathrm{DGM}}(a, b, c) := \frac{\ln(c)}{\ln(\mathrm{DGM}(P_{\omega}))}.
\]
\end{definition}
Using Definition \ref{D_DGM_Mean}, the DGM quality may be rewritten in a suggestive manner. For a given $abc$-triple $(a, b, c)$ with corresponding set $P_{\omega}$ as given in Definition \ref{D_DGM_Quality} above, we have
\begin{align*}
q_{\mathrm{DGM}}(a, b, c) &= \frac{\ln(c)}{\exp\!\bigl(\tfrac{1}{\omega}\sum_i \ln\ln(p_i^\omega)\bigr)} \\
&= \frac{\ln(c)}{\exp\!\bigl(\ln\omega + \tfrac{1}{\omega}\sum_i \ln\ln p_i\bigr)},
\end{align*}
which may be rewritten as
\begin{align}\label{eq3}
    q_{\mathrm{DGM}}(a,b,c) = \frac{\ln(c)}{\omega \cdot \sqrt[\omega]{\prod_{i = 1}^{\omega} \ln(p_i)}}.
\end{align}

We remark, upon comparing equation (\ref{eq3}) with that of (\ref{eq1}), that while both effectively involve a geometric mean of the prime divisors of $abc$ as a \textit{penalty} for each triple, the quality $q_{\mathrm{DGM}}$ includes an explicit multiplicative contribution of $1/\omega$. Thus, the DGM quality penalizes triples with many prime divisors overall with a linearly dependent penalty on $\omega$. Additionally, the contribution of each prime towards the geometric mean is \textit{logarithmic}, rather than a contribution of $p_i^\omega$ within the standard quality. Thus, the DGM quality directly prioritizes triples with a smaller overall number of primes in their factorizations.

\subsection{Elementary Bounds and Families of High-Quality Triples}
\label{S_families}

We begin by establishing a series of straightforward bounds on the DGM Quality, to contextualize later results on asymptotics. We begin by establishing the following lemma.

\begin{figure}[t]
\centering
\includegraphics[width=0.7\columnwidth]{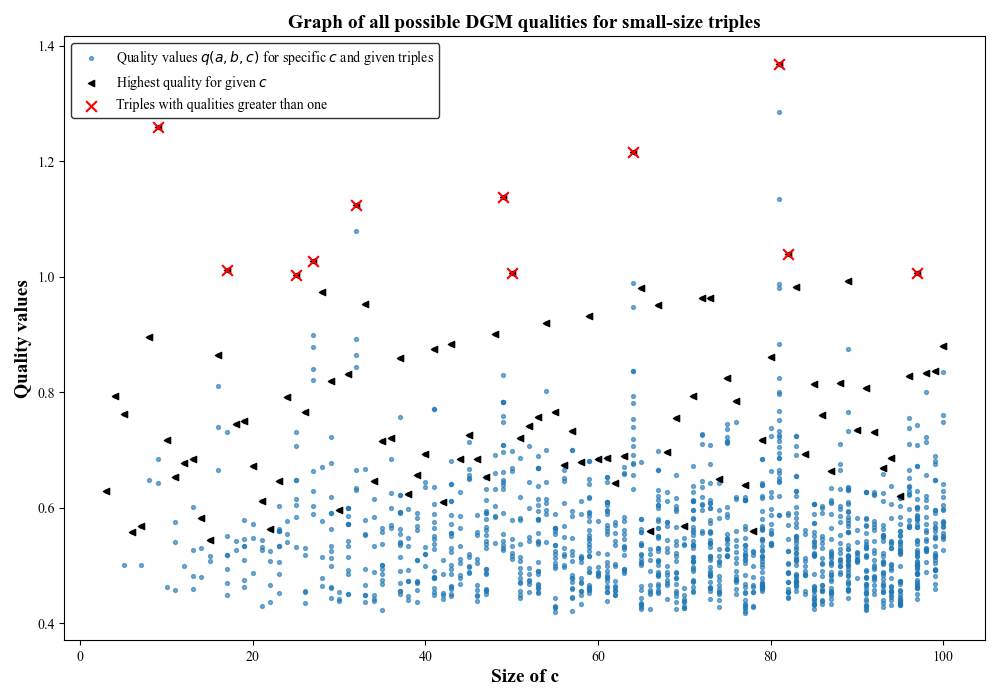}
\caption{$q_{\mathrm{DGM}}$ evaluated on all $abc$-triples $(a, b, c)$ with $c \le 100$. Red “x"s are DGM-hits, and black “x"s give the highest-quality triples for a valid $abc$-triples upon a given value of $c$.\label{fig1}}
\end{figure}

\begin{lemma}\label{L_ln_primes_DGM}
For any fixed integer $s\in\Z^{+}$ with distinct prime divisors of the form $p_1,\dots,p_{\omega_s}$, we must have that
$\sum_{i=1}^{\omega_s}\ln(p_i)\le\ln(s)$, with equality if, and only if, $s$ is a squarefree integer.
\end{lemma}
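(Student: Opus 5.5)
The plan is to use the prime factorization of $s$ directly. By the fundamental theorem of arithmetic, write $s=\prod_{i=1}^{\omega_s}p_i^{e_i}$ with distinct primes $p_i$ and exponents $e_i\ge 1$. Taking logarithms gives
\[
\ln(s)=\sum_{i=1}^{\omega_s}e_i\ln(p_i),
\]
so that
\[
\ln(s)-\sum_{i=1}^{\omega_s}\ln(p_i)=\sum_{i=1}^{\omega_s}(e_i-1)\ln(p_i).
\]
In the language of Definition \ref{D_radical}, this difference is exactly $\ln\bigl(s/\mathrm{rad}(s)\bigr)$.

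For the inequality, I will note that each term $(e_i-1)\ln(p_i)$ is a product of the nonnegative integer $e_i-1$ and the positive number $\ln(p_i)$. Here $\ln(p_i)>0$ because $p_i\ge 2$. The sum is therefore nonnegative, which gives $\sum_i\ln(p_i)\le\ln(s)$.

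For the equality case, each summand is nonnegative and each $\ln(p_i)$ is strictly positive. So the sum vanishes if and only if $e_i=1$ for every $i$, which is precisely the statement that $s$ is squarefree. The case $s=1$ fits the convention: the sum over primes is empty, $\ln(1)=0$, and $1$ is squarefree.

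I expect no real obstacle. The only step needing care is the equality characterization, which relies on the strict positivity of $\ln(p_i)$ so that no term $(e_i-1)\ln(p_i)$ with $e_i\ge 2$ can vanish.
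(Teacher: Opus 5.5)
Your proof is correct and follows the same route as the paper's: write $s=\prod p_i^{e_i}$, take logarithms, and use $e_i\ge 1$. You also prove the equality case explicitly via the strict positivity of $\ln(p_i)$, and you check $s=1$; the paper's proof leaves both of these implicit.
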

\begin{proof}
Write the prime factorization of $s$ as $\prod_{i = 1}^{\omega_s} p_i^{e_i}$, with all exponents $e_i$ satisfying $e_i \ge 1$. Then, it follows that $\ln(s) = \sum_{i = 1}^{\omega_s} e_i \ln(p_i)$. Hence, we must have that $\ln s=\sum e_i\ln p_i\ge\sum\ln p_i$ since all the $e_i \ge 1$.
\end{proof}
Therefore, if we are able to convert the expression in equation (\ref{eq3}) to one involving a \textit{sum} of the logarithm of the primes dividing $abc$, the above result guarantees an upper bound on the denominator. The AM-GM inequality provides precisely this tool.
\begin{theorem}\label{T_13}
For every $abc$-triple, $q_{\mathrm{DGM}}(a, b, c) > \tfrac{1}{3}$.
\end{theorem}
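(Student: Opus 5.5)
We will bound the denominator of equation (\ref{eq3}) from above by a multiple of $\ln(c)$. By the AM-GM inequality applied to the positive reals $\ln(p_1),\dots,\ln(p_\omega)$, we have
\[
\omega\cdot\sqrt[\omega]{\prod_{i=1}^{\omega}\ln(p_i)} \;\le\; \omega\cdot\frac{1}{\omega}\sum_{i=1}^{\omega}\ln(p_i) \;=\; \sum_{i=1}^{\omega}\ln(p_i).
\]
(Positivity of each $\ln p_i$ holds since every prime satisfies $p_i\ge 2$.) Thus the denominator is at most the sum of the logarithms of the distinct primes dividing $abc$.

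Next we invoke Lemma \ref{L_ln_primes_DGM} with $s=abc$. This gives $\sum_{i=1}^{\omega}\ln(p_i)=\ln(\mathrm{rad}(abc))\le\ln(abc)$. Since $a,b<c$ for an $abc$-triple, we have $abc<c^3$, and hence $\ln(abc)<3\ln(c)$. Chaining these bounds yields
\[
q_{\mathrm{DGM}}(a,b,c)\;\ge\;\frac{\ln(c)}{\ln(abc)}\;>\;\frac{\ln(c)}{3\ln(c)}\;=\;\frac13.
\]
The strict inequality comes from $abc<c^3$, which in turn uses $a,b\ge1$ and $a+b=c$. We must also have $\ln c>0$, i.e.\ $c\ge2$; this holds because $a,b\ge1$ forces $c\ge 2$.

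The one step needing care is the degenerate case $abc=1$, which would give $\omega=0$ and an undefined denominator. This cannot occur, however, since $c\ge2$ guarantees at least one prime divisor. Otherwise the argument is routine. The only real content is recognizing that AM-GM converts the geometric mean in equation (\ref{eq3}) into the sum $\ln\mathrm{rad}(abc)$, so that the DGM quality dominates the standard quality: $q_{\mathrm{DGM}}\ge q_s$. The claimed bound then follows from the trivial estimate $q_s>1/3$, via $\mathrm{rad}(abc)\le abc<c^3$.
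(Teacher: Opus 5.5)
Your proposal is correct and follows essentially the same route as the paper: apply AM--GM to the $\ln p_i$ in equation (\ref{eq3}), bound $\sum_i \ln p_i \le \ln(abc)$ via Lemma \ref{L_ln_primes_DGM}, and conclude using $a,b<c$. Your extra checks that $c\ge 2$ (so $\omega\ge 1$ and $\ln c>0$) and that each $\ln p_i>0$ are details the paper leaves implicit.
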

\begin{proof}
Let $(a, b, c)$ an $abc$-triple, with $\omega$ defined as before. We apply the AM-GM inequality on equation (\ref{eq3}):
\begin{align*}
q_{\mathrm{DGM}} (a, b, c) &\ge \frac{\ln(c)}{\omega \cdot \frac{1}{\omega} \sum_{i = 1}^{\omega} \ln(p_i)} \ge\frac{\ln(c)}{\ln(abc)},
\end{align*}
where the second inequality invokes Lemma \ref{L_ln_primes_DGM}. Since $a, b < c$, we have 
\begin{align*}
    q_{\mathrm{DGM}} (a, b, c) > \frac{\ln(c)}{\ln(c^3)} = \frac{1}{3}.
\end{align*}
\end{proof}

As in the case for the standard quality, we state that an $abc$-triple is a \textit{DGM hit} if we have $q_{\text{DGM}}(a, b, c) > 1$; per Definition \ref{D_DGM_Quality}, this is equivalent to the condition $\mathrm{DGM}(P_\omega)<c$. Figure \ref{fig1} shows $q_{\mathrm{DGM}}$ evaluated on all $abc$-triples with $c\le 100$, with DGM hits isolated. 

For any positive integer $c$, we define the sequence $s_c := \max\{q_{\mathrm{DGM}}(a,b,c):(a,b,c)\text{ is an }abc\text{-triple}\}$ at fixed $c$. We now seek to investigate $\limsup s_c$ via various families of triples; this is suggested by certain subsets of triples in Figure \ref{fig1} that appear to ascend roughly logarithmically in the size of $c$.

Upon inspection many of the high-quality triples $(a, b, c)$ in Figure \ref{fig1} utilize components in $a, b$ with maximally simple factorization – for instance, powers of $2$ or $3$. For instance, we may set $a=2^i$ and $b=3^j$ for indices $i,j$; if we have that $c=a+b$ is prime, then $\omega=3$. Consequently, equation (\ref{eq3}) immediately gives
\begin{align*}
q_{\mathrm{DGM}}(2^i,3^j,c) &= \frac{\ln c}{3({\ln 2\cdot\ln 3\cdot\ln c})^{1/3}}\\
&= (\ln c)^{2/3}\cdot\frac{1}{3\sqrt[3]{\ln 2\cdot\ln 3}},
\end{align*}
which diverges asymptotically as a function of $c$. We designate these triples to constitute the \textit{power of 2, 3} family; the asymptotic distribution of triples of this form influences the  behavior of the DGM quality, as follows.
\begin{theorem}\label{T_2_3}
If there exist infinitely many pairs $(i,j) \subset (\Z^{+})^2$ such that $c=2^i+3^j$ is prime, then $\limsup_{c\to\infty}s_c=\infty$.
\end{theorem}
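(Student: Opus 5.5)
The plan is to use the computation just before the statement and show that the hypothesized primes yield infinitely many distinct values of $c$ along which $s_c$ grows without bound.

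First, fix a pair $(i,j)$ with $i,j\ge 1$ such that $c=2^i+3^j$ is prime. I would check that $(2^i,3^j,c)$ is an $abc$-triple in the sense of Definition \ref{D_abc_triple}. The entries $2^i$ and $3^j$ are coprime, and $c$ is a prime exceeding both of them, so $c$ is coprime to each. Hence $abc = 2^i3^jc$ has exactly the three distinct prime divisors $2$, $3$, $c$, so $\omega=3$. Substituting into equation (\ref{eq3}) gives the closed form already displayed:
\[
q_{\mathrm{DGM}}(2^i,3^j,c)=\frac{(\ln c)^{2/3}}{3\sqrt[3]{\ln 2\cdot \ln 3}}.
\]
By the definition of $s_c$ as a maximum over all $abc$-triples with that value of $c$, this yields $s_c\ge (\ln c)^{2/3}/(3\sqrt[3]{\ln2\ln3})$ for every such prime $c$.

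Next, I would show that the set of such primes $c$ is infinite. Each $c$ is produced by only finitely many pairs, since $2^i<c$ and $3^j<c$ bound both exponents. So infinitely many pairs $(i,j)$ cannot all give values of $c$ lying below a fixed bound. Therefore the attained primes $c$ form an infinite, unbounded set $\{c_k\}$ with $c_k\to\infty$.

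Finally, along this sequence $s_{c_k}\ge K(\ln c_k)^{2/3}\to\infty$, where $K=1/(3\sqrt[3]{\ln2\ln3})>0$. Hence $\limsup_{c\to\infty}s_c=\infty$.

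The only mildly delicate step is the passage from infinitely many pairs to infinitely many distinct, unbounded values of $c$. The finiteness-of-preimages observation above handles it.
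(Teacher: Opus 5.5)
Your proof is correct and takes essentially the paper's approach: both use the closed form $q_{\mathrm{DGM}}(2^i,3^j,c)=(\ln c)^{2/3}/(3\sqrt[3]{\ln 2\ln 3})$ for $\omega=3$ and let $c\to\infty$ along the hypothesized primes. The paper states this with an explicit threshold $c_0>\exp(3N\sqrt{3N})$, using $\ln 2\ln 3<1$; your finiteness-of-preimages remark makes explicit a step the paper leaves implicit, namely that infinitely many pairs $(i,j)$ produce arbitrarily large primes $c$.
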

\begin{proof}
Let $N\in\Z^{+}$, and pick $c_0>\exp(3N\sqrt{3N})$. Then, by the above bound, $s_c \ge q_{\mathrm{DGM}}(a, b, c)>\frac{1}{3}\cdot (\ln(c_0))^{2/3}>N$ for all \textit{power of 2, 3} triples $(a, b, c)$ with $c>c_0$.
\end{proof}
This method may be extended to establish the following theorem in greater generality, with a slight modification of the relevant bound.
\begin{theorem}\label{T_power_of_p_q}
For distinct primes $p,q$, if there are infinitely many pairs $(i,j)$ with $c=p^i+q^j$ prime, then
$q_{\mathrm{DGM}}(p^i,q^j,c)=(\ln c)^{2/3}/\sqrt[3]{3\ln p\ln q}\to\infty$, such that $\limsup s_c=\infty$.
\end{theorem}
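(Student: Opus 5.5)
The plan mirrors the proof of Theorem \ref{T_2_3}: evaluate equation (\ref{eq3}) explicitly on the family $(p^i,q^j,c)$, and then use the hypothesis to make $c$ arbitrarily large within that family.

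First I check that each such triple is admissible.
\begin{itemize}
\item Since $p\neq q$ are primes and $i,j\ge 1$, we have $\gcd(p^i,q^j)=1$.
\item Since $c=p^i+q^j$ is prime and $c>\max\{p^i,q^j\}\ge\max\{p,q\}$, the prime $c$ differs from both $p$ and $q$.
\item Hence $c$ is coprime to $p^i$ and to $q^j$, and $(p^i,q^j,c)$ is an $abc$-triple in the sense of Definition \ref{D_abc_triple}.
\item The primes dividing $abc$ are exactly $p$, $q$ and $c$, so $\omega=3$.
\end{itemize}

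Next I substitute into equation (\ref{eq3}), whose denominator is $\omega\sqrt[\omega]{\prod\ln p_i}$:
\[
q_{\mathrm{DGM}}(p^i,q^j,c)=\frac{\ln c}{3\,(\ln p\,\ln q\,\ln c)^{1/3}}=\frac{(\ln c)^{2/3}}{3\sqrt[3]{\ln p\,\ln q}} .
\]
This is the computation displayed just before Theorem \ref{T_2_3}, with $2,3$ replaced by $p,q$.

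I note that the constant written in the statement, $\sqrt[3]{3\ln p\ln q}$, appears to be a slip. The computation gives $3\sqrt[3]{\ln p\ln q}=\sqrt[3]{27\ln p\ln q}$. In the proof I will record the correct constant and point out that only the factor $(\ln c)^{2/3}$ matters for the conclusion. Since $p,q$ are fixed, the constant $K_{p,q}:=3\sqrt[3]{\ln p\ln q}$ is positive and independent of $(i,j)$.

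Finally, I show that $c\to\infty$ along the hypothesized pairs. For any bound $X$, only finitely many pairs $(i,j)$ satisfy $p^i+q^j\le X$, because $i\le\log_p X$ and $j\le\log_q X$. So infinitely many such pairs yield prime values $c$ that are unbounded. For a given $N$, choose one with $c>\exp\bigl((NK_{p,q})^{3/2}\bigr)$. Then
\[
s_c\ \ge\ q_{\mathrm{DGM}}(p^i,q^j,c)\ =\ \frac{(\ln c)^{2/3}}{K_{p,q}}\ >\ N,
\]
where the first inequality holds by the definition of $s_c$ as a maximum. Therefore $\limsup s_c=\infty$.

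The only delicate points are the coprimality and $\omega=3$ bookkeeping, in particular ruling out $c\in\{p,q\}$, and the conversion from infinitely many pairs to unbounded $c$. Both are routine.
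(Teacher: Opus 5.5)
Your proposal is correct and follows the paper's own route: the paper gives no separate proof, obtaining the result by analogy with Theorem~\ref{T_2_3} and as the $m=n=1$ case of Theorem~\ref{T_main}, which substitutes $\omega=3$ into \eqref{eq3} and lets $c\to\infty$ along the family. Your correction of the constant to $3\sqrt[3]{\ln p\ln q}$ is right, matches the paper's own $K=\tfrac13(\ln p\ln q)^{-1/3}$ in Theorem~\ref{T_main}, and does not affect the divergence.
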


\iffalse
% [CUT: §3.3.3 Fixed Sequence of Primes Method (Theorem \ref{T_main}). The
%  result generalizes \ref{T_2_3}/\ref{T_power_of_p_q} to any finite
%  fixed prime sets; conclusion q_DGM ~ C (ln c)^{1 - 1/(m+n+1)} -> infty.
%  Replaced by a remark.]
\fi

The previous two constructions are special cases of the following general theorem, which allows the integers $a,b$ in an $abc$-triple to be more general products of powers over \textit{fixed} sets of primes.

\begin{theorem}[Fixed sequence of primes]\label{T_main}
For fixed primes $a_1,\dots,a_n$ and $b_1,\dots,b_m$ and exponents $i_1,\dots,i_n,j_1,\dots,j_m$ in $\Z^{+}$, let $a=\prod_{k=1}^n a_k^{i_k}$, $b=\prod_{k=1}^m b_k^{j_k}$, $c=a+b$. If there are infinitely many tuples $(i_1,\dots,i_n,j_1,\dots,j_m)$ such that $c$ is prime, then $\limsup_{c\to\infty}s_c=\infty$.
\end{theorem}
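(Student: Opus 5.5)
The plan is to follow the proof of Theorem \ref{T_2_3} almost verbatim, replacing the explicit constant by one depending only on the fixed primes. Whenever $c=a+b$ is prime for one of the admissible tuples, the primes dividing $abc$ are exactly $a_1,\dots,a_n,b_1,\dots,b_m$ and $c$ itself. I will assume the $a_k$ and $b_k$ are pairwise distinct, which is needed for $(a,b,c)$ to be an $abc$-triple at all. Hence $\omega=m+n+1$ is fixed along the family. The prime $c$ is distinct from every $a_k,b_k$ once $c$ exceeds $\max\{a_k,b_k\}$, and this holds for all but finitely many tuples because $c>a_k^{i_k}\ge a_k$. Coprimality of $a,b,c$ follows from the disjointness of the two prime sets together with the primality of $c$.

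Substituting into equation (\ref{eq3}) and setting $K:=\prod_{k}\ln a_k\prod_k\ln b_k$, which is a positive constant, gives
\begin{align*}
q_{\mathrm{DGM}}(a,b,c)=\frac{\ln c}{(m+n+1)\,(K\ln c)^{1/(m+n+1)}}=\frac{(\ln c)^{1-1/(m+n+1)}}{(m+n+1)K^{1/(m+n+1)}}.
\end{align*}
Since $m+n+1\ge 3$, the exponent $1-1/(m+n+1)$ is at least $2/3$, which is strictly positive. So this quantity tends to $\infty$ as $c\to\infty$ within the family.

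It remains to check that the family really produces arbitrarily large $c$. The hypothesis supplies infinitely many tuples with $c$ prime. Each $c$ arises from only finitely many tuples, because every exponent satisfies $i_k\le\log_2 c$ and $j_k\le\log_2 c$, so the set of primes $c$ obtained is unbounded. Given $N$, I would choose $c_0$ with $(\ln c_0)^{1-1/(m+n+1)}>N(m+n+1)K^{1/(m+n+1)}$ and $c_0>\max\{a_k,b_k\}$. For every prime $c>c_0$ in the family, $s_c\ge q_{\mathrm{DGM}}(a,b,c)>N$. Since such $c$ are unbounded and $N$ is arbitrary, $\limsup_{c\to\infty}s_c=\infty$.

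The main obstacle is bookkeeping rather than analysis: the exact count of $\omega$. I need to confirm that each fixed prime actually divides $abc$, which holds because all exponents are positive, and that $c$ contributes exactly one new prime. Everything else is the computation already carried out for Theorem \ref{T_power_of_p_q}, with $3$ replaced by $m+n+1$.
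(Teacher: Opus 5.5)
Your proposal is correct and follows the paper's proof. With $\omega=m+n+1$ fixed, equation (\ref{eq3}) gives $q_{\mathrm{DGM}}=(\ln c)^{1-1/(m+n+1)}$ times a constant depending only on the fixed primes, so the quality diverges; your extra bookkeeping (disjointness of the prime sets, and that each $c$ comes from only finitely many tuples, so the primes $c$ are unbounded) fills in points the paper leaves implicit.
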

\begin{proof}
Let $\left( \prod_{k = 1}^n a_k^{i_k}, \prod_{k = 1}^{m} b_k^{j_k}, c\right)$ be such a valid $abc$-triple. For such triples, we have $\omega=m+n+1$, and equation (\ref{eq3}) gives
\begin{align*}
q_{\mathrm{DGM}}(a,b,c) &= \frac{\ln c}{(m\!+\!n\!+\!1)\left( \ln(c) \cdot {\prod_{i=1}^{m+n} (\ln (p_i))}\right)^{1/{m + n + 1}}}\\
 &= (\ln c)^{1-\frac{1}{m+n+1}}\cdot K,
\end{align*}
where $K=\frac{1}{m + n + 1}\bigl(\prod_{i=1}^{m+n}\ln p_i\bigr)^{-1/(m+n+1)}$ is independent of the choice of exponents $i_1, \dotsc, i_n, j_1, \dotsc, j_m$. Since $1-1/(m+n+1)>0$, the logarithmic term, and hence $q_{\mathrm{DGM}}\to\infty$, diverge along the infinite family of $abc$-triples as $c \ra \infty$.
\end{proof}
Theorems \ref{T_2_3} and \ref{T_power_of_p_q} thereby give the cases $m = n = 1$ for the above result, with the rate of growth decreasing as more primes are admitted (and $m,n$ grow).

Such methods of searching for triples with concise factorizations also yield other families of the form $(1, c - 1, c)$ for various special choices of $c$. For instance, for a given Fermat prime $p = 2^{n} + 1$, a \textit{Fermat triple} of the form $(1, 2^n,2 ^{n + 1})$ may be formed, such that $\omega = 2$. An analogous derivation to Thm. \ref{T_2_3} for this value of $\omega$ gives $q_{DGM}(1, 2^n, 2^{n + 1}) \ge \sqrt{n}/2$ for all such triples. Hence, divergent asymptotic behavior occurs, conditionally on the infinitude of Fermat primes, but only five such primes are known \cite{tsang2010}).
\subsubsection{Mersenne Triples}
A natural extension of this approach, however, is to invoke Mersenne primes (of the form $2^n - 1$) rather than Fermat primes. For all primes of the form $2^n - 1$, we must have that $n$ itself is prime, creating \textit{Mersenne triples }of the form $(1, 2^{n} - 1, 2^{n})$ for suitable prime $n, 2^n - 1$ \cite{mersenne}. There are 51 known Mersenne primes; the largest known prime overall is Mersenne, with $n=82{,}589{,}933$ \cite{mersennelist}. In an analogous manner to the Fermat triple case, a bound on $q_{\text{DGM}}$ may be established for Mersenne triples.

\begin{theorem}\label{T_3_4}
For all Mersenne triples $(1,2^n-1,2^n)$, $q_{\mathrm{DGM}}(a,b,c)=\sqrt{n}/2$.
\end{theorem}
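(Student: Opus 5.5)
The plan is a direct evaluation of equation (\ref{eq3}) on the Mersenne triple $(a,b,c)=(1,2^n-1,2^n)$, in the same way as the power of $2,3$ computation preceding Theorem \ref{T_2_3}.

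\textbf{Step 1 (the triple and $\omega$).} First I will check that $(1,2^n-1,2^n)$ is an $abc$-triple in the sense of Definition \ref{D_abc_triple}. Indeed $1+(2^n-1)=2^n$, and $\gcd(2^n-1,2^n)=1$ because the two numbers are consecutive. Since $2^n-1$ is assumed prime, $abc=2^n(2^n-1)$ has exactly the two distinct prime divisors $p_1=2$ and $p_2=2^n-1$. Hence $\omega=2$.

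\textbf{Step 2 (substitution).} Equation (\ref{eq3}) then gives
\[
q_{\mathrm{DGM}}(1,2^n-1,2^n)=\frac{\ln(2^n)}{2\sqrt{\ln 2\cdot\ln(2^n-1)}}=\frac{n\ln 2}{2\sqrt{\ln 2\cdot\ln(2^n-1)}}.
\]

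\textbf{Step 3 (reduction to $\sqrt{n}/2$).} Write $\ln(2^n-1)=n\ln 2+\ln(1-2^{-n})$. If the correction $\ln(1-2^{-n})$ is dropped, i.e.\ the Mersenne prime $2^n-1$ is identified with $c=2^n$ inside the logarithm, then the denominator becomes $2\sqrt{n}\,\ln 2$. The quotient is then
\[
\frac{n\ln 2}{2\sqrt{n}\,\ln 2}=\frac{\sqrt{n}}{2},
\]
which is the stated value.

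\textbf{Main obstacle.} Step 3 is the delicate point. Taken literally, equation (\ref{eq3}) gives
\[
q_{\mathrm{DGM}}=\frac{\sqrt{n}}{2}\cdot\sqrt{\frac{n\ln 2}{\ln(2^n-1)}}.
\]
The second factor exceeds $1$, because $\ln(2^n-1)<n\ln 2$. It differs from $1$ only by $O(2^{-n}/n)$, since $-\ln(1-2^{-n})\approx 2^{-n}$.

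So the displayed equality $\sqrt{n}/2$ holds exactly under the identification $\ln(2^n-1)=\ln(2^n)$ used in Step 3. Under the literal definition it holds only up to this exponentially small factor, and the true value is then slightly larger than $\sqrt{n}/2$.

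I would state this convention explicitly in the proof. I would also record the exact expression above, so that any later use of the theorem, such as divergence conditional on infinitely many Mersenne primes, rests on the precise formula. That use is unaffected either way, because the correction factor tends to $1$ and is at least $1$.
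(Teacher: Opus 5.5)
Your proposal is correct and follows the paper's argument: the paper also just substitutes $\omega=2$ into equation (\ref{eq3}), and its displayed denominator $2\sqrt{\ln 2\cdot\ln c}$ silently makes the same identification $\ln(2^n-1)=\ln(2^n)$ that you flag in Step 3. Your caveat is justified. Literally, $q_{\mathrm{DGM}}(1,2^n-1,2^n)=\tfrac{\sqrt n}{2}\sqrt{n\ln 2/\ln(2^n-1)}>\tfrac{\sqrt n}{2}$; for $n=2$ this gives $\sqrt{\ln 2/\ln 3}\approx 0.794$ versus $\sqrt2/2\approx 0.707$, and the entry $1.80279$ for $n=13$ in Table \ref{Table_2} is the literal value rather than $\sqrt{13}/2\approx 1.80278$. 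So the stated ``$=$'' is only an asymptotic equality, while the strict inequality $q_{\mathrm{DGM}}>\sqrt n/2$, which is all the later divergence arguments use, holds exactly.
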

\begin{proof}
Let $(1, 2^{n} - 1, 2^n)$ be a Mersenne triple, such that $n, 2^{n} - 1$ are both prime. Then, we must have that $\omega = 2$, and hence, applying (\ref{eq3}),
\begin{align*}
q_{\mathrm{DGM}} = \frac{\ln c}{2\sqrt{\ln 2\cdot\ln c}}
&=\sqrt{n\ln 2}\cdot\frac{1}{2\sqrt{\ln 2}}=\frac{\sqrt{n}}{2}. \qedhere % added extra
\end{align*}
\end{proof}
Thus, for the largest known Mersenne prime, which corresponds to $n_m=82{,}589{,}933$, we have
\[
    q_{\mathrm{DGM}}(1,\,2^{n_m}-1,\,2^{n_m}) = \frac{\sqrt{82{,}589{,}933}}{2} \approx 4543.95,
\]
which is the triple corresponding to the highest DGM quality that we found using the Mersenne method.

Therefore, upon the Lenstra-Pomerance-Wagstaff conjecture that there exist an infinitude of Mersenne primes (with distribution such that the number of such primes below exponent $n$ is approximated by $\exp(\gamma) \cdot \log(\log(n)))$ for Euler-Mascheroni constant $\gamma$), it follows from Theorem \ref{T_3_4} that $\limsup_{c \ra \infty} s_c = \infty$ \cite{pomerance1981}. Conversely, if $s_c$ is bounded above, then it follows that the conjecture that there exist infinitely many Mersenne primes fails.

These results motivate the following analogue of the $abc$-conjecture for the DGM quality; although this analogue is not quite as interesting as the conjecture itself, since we establish that the quality is \textit{unbounded} asymptotically, it nonetheless lends itself to modifications that will be of interest in Section \ref{S_DGM_Class}.

\begin{theorem}[DGM Quality Asymptotics]\label{C_abc_DGM}
We have that $\limsup_{c\to\infty} s_c = \infty$.
\end{theorem}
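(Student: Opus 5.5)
We prove the statement unconditionally using Chen's theorem, which supplies infinitely many $abc$-triples with a uniformly bounded number of prime divisors. Theorems \ref{T_2_3}, \ref{T_power_of_p_q}, \ref{T_main} and \ref{T_3_4} all yield divergence, but each depends on an open conjecture about primes in a special family. By Theorem \ref{T_main}, any infinite family with $\omega$ bounded and all but a bounded number of the prime logarithms bounded has $q_{\mathrm{DGM}}\gtrsim(\ln c)^{1-1/\omega}$. The flexibility we need is a family where $\omega$ is bounded but several primes may still be large.

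We use Chen's theorem: there are infinitely many primes $p$ such that $p+2$ is either prime or a product of two primes. For each such odd $p$, consider the triple $(a,b,c)=(2,p,p+2)$. Coprimality holds because $p$ and $p+2$ are odd and $\gcd(p,p+2)\mid 2$, so this is an $abc$-triple. Since $p+2=q$ or $p+2=qr$ (with $q,r$ primes, possibly equal), the primes dividing $abc$ are $2$, $p$, and at most two further primes. Hence $\omega\in\{3,4\}$.

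Next we bound the denominator in equation (\ref{eq3}) by a power of $\ln c$ strictly smaller than $1$. Each of $p,q,r$ is at most $c$, so $\ln p,\ln q,\ln r\le\ln c$. In the case $\omega=4$,
\[
\ln 2\cdot\ln p\cdot\ln q\cdot\ln r\le \ln 2\,(\ln c)^3,
\]
so
\[
q_{\mathrm{DGM}}(2,p,p+2)\ge\frac{\ln c}{4(\ln 2)^{1/4}(\ln c)^{3/4}}=\frac{(\ln c)^{1/4}}{4(\ln 2)^{1/4}}.
\]
The case $\omega=3$ likewise gives $q_{\mathrm{DGM}}\ge(\ln c)^{1/3}/(3(\ln 2)^{1/3})$. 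If $r=q$, then $\omega=3$ with $\ln q\le\ln c$, and the same bound applies. In every case $q_{\mathrm{DGM}}\ge C(\ln c)^{1/4}$ for an absolute constant $C>0$, once $c$ is large enough that $\ln c\ge1$.

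Finally, given $N$, Chen's theorem provides Chen primes $p$ with $c=p+2$ arbitrarily large. For such $c$ we have $s_c\ge q_{\mathrm{DGM}}(2,p,p+2)>N$, so $\limsup_{c\to\infty}s_c=\infty$.

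The only real obstacle is securing a family with bounded $\omega$ unconditionally, and Chen's theorem is exactly the input that does this. Once $\omega\le 4$ is fixed, the crude bound $\ln(\text{prime})\le\ln c$ is enough. The factor $\ln 2$ coming from $a=2$ keeps the geometric mean of the logarithms below $\ln c$ by the fixed power $(\ln c)^{1/\omega}$, and this is what produces divergence.
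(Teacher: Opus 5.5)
Your proof is correct and is essentially the paper's unconditional argument (Theorem \ref{T_Chen_DGM}): the Chen family $(2,p,p+2)$ with $\omega\in\{3,4\}$, split into the cases where $p+2$ is prime or a $P_2$ number. The only difference is that you bound $\ln q\cdot\ln r\le(\ln c)^2$ crudely instead of using AM--GM to get $(\ln c)^2/4$; this changes the constant from $(\ln 2/4)^{1/4}$ to $(\ln 2)^{1/4}$ and does not affect divergence.
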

We remark that Theorem \ref{C_abc_DGM} follows from either (i) the infinitude of Mersenne primes, (ii) the infinitude of Fermat primes, (iii) the infinite of primes of the form $p^i+q^j$ for fixed primes $p,q$, or (iv) any case covered by Theorem \ref{T_main}. These observations follow from Theorems \ref{T_2_3}, \ref{T_power_of_p_q}, \ref{T_main}, and \ref{T_3_4}.

\subsubsection{Alternative Conditional Proof through Twin Primes}

The constructions in Theorems \ref{T_2_3}-\ref{T_3_4}  all share the structural property that at most one of the entries of the $abc$-triple $(a, b, c)$ remains a prime, such that $\omega(abc)$ remains invariant over the resulting families. We now give a distinct conditional route in which two such primes grow with the size of $c$, such that the constraint is provided by twin primes. This yields a new family of triples whose existence is not subsumed by Theorems \ref{T_2_3}–\ref{T_3_4}, since two entries of the triple vary (rather than being drawn from a fixed prime set).\footnote{While this method is less computationally efficient, it motivates the result in section \ref{S_chen} that gives an unconditional proof of Theorem \ref{C_abc_DGM}.}

\begin{theorem}[Twin-prime approach to Theorem \ref{C_abc_DGM}]\label{T_twin_DGM}
All triples of the form $(a,b,c)=(2,p,p+2)$ for $p$ prime satisfy
\[
q_{\mathrm{DGM}}(2,p,p+2) \in \mathcal{O} \left( \frac{(\ln c)^{1/3}}{3\,(\ln 2)^{1/3}} \right)
\]
as $c \ra \infty$, and hence if the twin prime conjecture holds, $\limsup_{c\to\infty} s_c=\infty$, establishing Theorem \ref{C_abc_DGM}.
\end{theorem}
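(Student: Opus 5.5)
The plan is to reduce everything to the exact formula (\ref{eq3}), exactly as in the proofs of Theorems \ref{T_2_3} and \ref{T_main}, but now with two of the three prime divisors growing. I will read the statement as concerning triples $(2,p,p+2)$ in which both $p$ and $p+2$ are prime, i.e.\ twin primes. Only then is $\omega$ pinned down: $abc=2\cdot p\cdot(p+2)$ has exactly $\omega=3$ distinct prime divisors for $p\ge 3$. If $p+2$ is composite, $\omega$ is not controlled and the claimed order of growth need not hold. I will remark on this reading explicitly rather than try to bound the general case.

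\textbf{Step 1 (exact formula).} First I check that $(2,p,p+2)$ is an $abc$-triple: $p$ is odd, so $\gcd(2,p)=\gcd(2,p+2)=1$, and $\gcd(p,p+2)\mid 2$ forces $\gcd(p,p+2)=1$. Then (\ref{eq3}) with $\omega=3$ and primes $2,p,c=p+2$ gives
\[
q_{\mathrm{DGM}}(2,p,p+2)=\frac{\ln c}{3\,(\ln 2\cdot \ln p\cdot \ln c)^{1/3}}=\frac{(\ln c)^{2/3}}{3\,(\ln 2)^{1/3}(\ln p)^{1/3}}.
\]

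\textbf{Step 2 (comparing $\ln p$ with $\ln c$).} Since $c/2\le p<c$, we have $\ln c-\ln 2\le \ln p<\ln c$, so $(\ln p)/(\ln c)\to 1$. Substituting gives the two-sided estimate
\[
\frac{(\ln c)^{1/3}}{3(\ln 2)^{1/3}}< q_{\mathrm{DGM}}(2,p,p+2)\le \frac{(\ln c)^{2/3}}{3(\ln 2)^{1/3}(\ln c-\ln 2)^{1/3}}.
\]
Hence the quality is asymptotic to $(\ln c)^{1/3}/(3(\ln 2)^{1/3})$. This yields the stated $\mathcal{O}$-bound, and in fact the matching lower bound, which is what the divergence argument actually needs. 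This is the only place needing care: the point is to make sure the bound used for divergence is the \emph{lower} bound. It comes for free from $\ln p<\ln c$, so I do not expect a real obstacle here.

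\textbf{Step 3 (conclusion under the twin prime conjecture).} Assume there are infinitely many twin primes. Given $N\in\Z^{+}$, pick a twin pair with $c=p+2>\exp\bigl(27N^3\ln 2\bigr)$. Then by Step 2,
\[
s_c\ge q_{\mathrm{DGM}}(2,p,c)>\frac{(\ln c)^{1/3}}{3(\ln 2)^{1/3}}>N.
\]
Because such $c$ are unbounded, $\limsup_{c\to\infty}s_c=\infty$, which is Theorem \ref{C_abc_DGM}. I will end by noting that the growth rate $(\ln c)^{1/3}$ is slower than the $(\ln c)^{2/3}$ of Theorem \ref{T_2_3}, because two of the three primes now contribute logarithms of size $\ln c$ to the geometric mean. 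This is the feature that the later unconditional Chen-type argument must tolerate.
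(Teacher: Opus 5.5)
Your proof is correct and takes the same route as the paper: you specialize (\ref{eq3}) to $\omega=3$ with primes $2,p,p+2$ under the twin-prime reading, then compare $\ln p$ with $\ln c$. The paper uses a $(1+o(1))$ expansion at this step, whereas your explicit bounds $\ln c-\ln 2\le \ln p<\ln c$ give a non-asymptotic lower bound, which is the direction the divergence conclusion actually needs; this is a refinement of the same argument, not a different one.
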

\begin{proof}
Accordingly, all such triples $(a, b, c) = (2, p, p + 2)$ are $abc$-triples, since $p$ is odd and both $p, p + 2$ are primes. Thus, $\omega = 3$ results, with all three primes being distinct since $p\ge 3$ is required of a twin prime pair.

Thus, applying equation (\ref{eq3}), we have
\[
q_{\mathrm{DGM}}(2,p,p+2)=\frac{\ln(p+2)}{3 \left(\ln 2\cdot\ln(p)\cdot \ln(p+2) \right)^{1/3}}.
\]
For large $p$, we may invoke the approximation that $\ln(p+2)=\ln p+\ln(1+2/p)=\ln p+\mathcal{O}(1/p)$, and hence $\ln p\cdot\ln(p+2)=(\ln p)^2(1+o(1))$, yielding the result that
\[
({\ln 2\cdot \ln p\cdot \ln(p+2)})^{1/3} = (\ln 2)^{1/3}(\ln p)^{2/3}(1+o(1)).
\]
Substitution yields
\begin{align*}
q_{\mathrm{DGM}}(a, b, c) &= \frac{\ln(c)}{3\,(\ln 2)^{1/3}(\ln(c))^{2/3} (1+o(1))} \\& \in \mathcal{O}\left( \frac{(\ln(c))^{1/3}}{3\,(\ln 2)^{1/3}} \right)
\end{align*}
for triples of this form. Therefore, up to constant scaling, the DGM quality grows proportionately to $(\ln(c))^{1/3}$ along this family; thus, if the twin-prime family is infinite, then there are infinite such triples of the form $(2, p, p + 2)$, and we have $\limsup_{c \ra \infty} s_c=\infty$.
\end{proof}

We remark that Theorem \ref{T_twin_DGM} is structurally distinct from the strategy of Mersenne primes or fixed families of primes, since the twin prime direction yields a quality metric that grows slightly \textit{slower} in $\ln(c)$ asymptotically, and harnesses an entirely disjoint family of triples, yet nonetheless gives a divergent result for $s_c$.\footnote{By the Hardy-Littlewood prime $k$-tuples heuristic (which remains unproven), the density of twin primes below $N$ asymptotically approaches $2 C_2 N/(\ln N)^2$ with $C_2$ the twin-prime constant, so this family is heuristically far denser than the Mersenne family, which may be advantageous for future computational methods on the DGM quality \cite{hardy1923some}.}

\subsubsection{Implications of Chen's Theorem}
\label{S_chen}
All preceding routes to establish Theorem \ref{C_abc_DGM} are conditional on conjectures (for instance, the Twin-Prime or Lenstra-Pomerance-Wagstaff conjectures). We now demonstrate that, upon coupling the twin-prime construction provided by Theorem \ref{T_twin_DGM} with a slight weakening involving numbers that are the products of exactly two primes ($P_2$ numbers), then the theorem holds unconditionally. To establish this, we invoke the following result \cite{chen1973}.

\begin{theorem}[Chen \cite{chen1973}]\label{T_Chen}
There exist infinitely many primes $p$ such that $p+2$ is either (i) prime, or (ii) a product of two primes – i.e., a $P_2$ number.
\end{theorem}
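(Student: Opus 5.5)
Theorem \ref{T_Chen} is a deep input from sieve theory, and the paper only needs it as a black box \cite{chen1973}. Nonetheless, here is the route I would take to prove it: a \emph{weighted linear sieve} combined with Chen's \emph{switching principle}, with the Bombieri--Vinogradov theorem supplying the level of distribution.

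\textbf{Setup and the weighted count.} Fix a large $x$ and let $\mathcal{A}=\{p+2 : p\le x,\ p \text{ prime}\}$. For $d$ squarefree, write $\mathcal{A}_d=\{n\in\mathcal{A}: d\mid n\}$ and $S(\mathcal{A},z)=\#\{n\in\mathcal{A}: (n,P(z))=1\}$, where $P(z)$ is the product of the primes below $z$. Put $z=x^{1/10}$ and $y=x^{1/3}$. I would study the weighted quantity
\begin{align*}
W(x) = S(\mathcal{A},z) - \tfrac12\sum_{z\le q<y} S(\mathcal{A}_q,z) - \tfrac12 \Omega(x),
\end{align*}
where $\Omega(x)$ counts $n\in\mathcal{A}$ of the form $n=q_1q_2q_3$ with $z\le q_1<y\le q_2\le q_3$.

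\textbf{Why $W(x)>0$ suffices.} Any $n=p+2\le x+2$ counted with positive weight has no prime factor below $z$, so it has at most $9$ prime factors. The subtracted terms are designed so that a positive weight forces $n$ to have at most one prime factor in $[z,y)$. A short case analysis then shows $n$ is either prime or a product of two primes. The awkward case is three factors, each at least $y$; this is essentially impossible since $n\le x+2$. The case $n=q_1q_2q_3$ with exactly one small factor is cancelled precisely by the $\Omega$ term. Contributions from non-squarefree $n$ are $O(x^{9/10})$ and negligible. So it suffices to show $W(x)\gg x/(\ln x)^2$.

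\textbf{Estimating the three terms.} The first two terms are handled by the Jurkat--Richert linear sieve. I would use its lower bound $f(s)$ for $S(\mathcal{A},z)$ and its upper bound $F(s)$ for each $S(\mathcal{A}_q,z)$. Both feed on the fact that $\mathcal{A}_d$ has size $\approx \pi(x)/\varphi(d)$ with errors controlled on average over $d\le x^{1/2}(\ln x)^{-B}$; this is exactly the Bombieri--Vinogradov theorem applied to primes $p\equiv -2 \pmod d$. The third term is the main obstacle, because a direct upper bound for $\Omega(x)$ is too weak. Here I would apply switching. Let $\mathcal{B}=\{q_1q_2q_3-2\le x\}$ with the $q_i$ in the prescribed ranges. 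Then $\Omega(x)\le S(\mathcal{B},x^{1/2-\epsilon})+O(x^{1/2})$, and this is bounded above by the linear sieve applied to $\mathcal{B}$. The needed level of distribution for $\mathcal{B}$ is a Bombieri--Vinogradov theorem for the bilinear sequence of products $q_1q_2q_3$ in arithmetic progressions. I would obtain it through the large sieve inequality, as in the Bombieri--Vinogradov proof, using the well-factorable structure of these products.

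\textbf{The numerical step.} Finally I would insert the explicit sieve functions ($F(s)=2e^\gamma/s$ and $f(s)=2e^\gamma\ln(s-1)/s$ in the relevant ranges) and evaluate the resulting integrals. The goal is to verify that the main-term constant of $W(x)$, in units of $C_2 x/(\ln x)^2$, is positive; Chen's values give roughly $3.6 - 2.9 - 0.4 > 0$. This numerical margin is delicate, and it, together with the switched bilinear level of distribution, is where I expect the real difficulty to lie. Any positive constant gives infinitely many primes $p$ with $p+2\in P_2$ or prime, which is the statement of Theorem \ref{T_Chen}.
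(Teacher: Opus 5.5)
The paper gives no proof of this statement. It is imported from Chen's work as a black box, as you note yourself, so your outline reconstructs the cited proof rather than offering an alternative to anything in the paper. Your architecture is the standard one:
\begin{itemize}
\item Chen's weights $1,-\tfrac12,-\tfrac12$;
\item Bombieri--Vinogradov at level $x^{1/2}(\ln x)^{-B}$, both for $S(\mathcal{A},z)$ and for the $q$-sum, where the moduli $qd$ stay below $D$;
\item switching for the $\Omega$-term.
\end{itemize}
Your reduction from $W(x)\gg x/(\ln x)^2$ to the theorem is correct. For $\mathcal{B}$, what you actually need is a Bombieri--Vinogradov theorem for convolutions. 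Write $q_1q_2q_3=m\cdot q_3$, apply the large sieve to the bilinear character sums, use Siegel--Walfisz for the prime variable $q_3$, and separate the coupling conditions $q_1q_2q_3\le x+2$ and $q_3\ge q_2$. Well-factorability is a property of sieve weights, not of this sequence, so that phrase names the wrong mechanism, although the route is right.

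The step that does not go through as written is the numerical one.
\begin{itemize}
\item \emph{Range of the sieve functions.} Your closed forms hold only for $0<s\le 3$ (for $F$) and $2\le s\le 4$ (for $f$). With $z=x^{1/10}$ you need $f$ at $s=5$. You also need $F$ at $s_q=\ln(D/q)/\ln z=5-10\theta$ for $q=x^{\theta}$, which lies in $(3,4]$ whenever $q<x^{1/5}$.
\item \emph{At $s=5$.} Your formula for $f$ is merely a lower bound there, which is harmless.
\item \emph{On $(3,4]$.} Here $F(s)=\frac{2e^{\gamma}}{s}\bigl(1+\int_2^{s-1}\frac{\ln(t-1)}{t}\,dt\bigr)>\frac{2e^{\gamma}}{s}$. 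Inserting $2e^{\gamma}/s$ therefore understates the term you subtract and is not a valid upper bound.
\end{itemize}
The figures $3.6-2.9-0.4$ are also wrong. In units of $C_2x/(\ln x)^2$ the three terms are:
\begin{itemize}
\item about $11.21$ for $S(\mathcal{A},z)$, and at least $8\ln 4\approx 11.09$ from your formula;
\item about $8.56$ for the halved $q$-sum, versus $4\ln 8\approx 8.32$ if one wrongly uses $2e^{\gamma}/s$;
\item about $4c\approx 1.96$ for the halved $\Omega$-term, where $c=\int_{1/10}^{1/3}\frac{\ln(2-3\theta)}{\theta(1-\theta)}\,d\theta\approx 0.491$.
\end{itemize}
This leaves a margin of roughly $0.69$, and at least $0.57$, so the argument closes once you use the correct branch of $F$.

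Alternatively, take $z=x^{1/8}$. Then $s=4$ and $s_q=4-8\theta\le 3$, so your two elementary formulas apply exactly. The terms become $8\ln 3$, $4\ln 6$ and $4c'$, with $c'=\int_{1/8}^{1/3}\frac{\ln(2-3\theta)}{\theta(1-\theta)}\,d\theta\approx 0.363$, giving a margin of about $0.17$. Your weight analysis is unaffected by this change of $z$.
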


Let the set $S$ be the set of primes $p \in \Z^{+}$ such that $p + 2$ lies in $\mathbb{P} \cup P_2$. Then, by the above theorem, we know $|S|=\infty$. Harnessing this result gives the following.

\begin{theorem}[Unconditional proof of Theorem \ref{C_abc_DGM}]\label{T_Chen_DGM}
We have $\limsup_{c \ra \infty} s_c = \infty$ per the following family: for all $p\in S$, the triple $(a, b, c) = (2,p,p+2)$ satisfies
\[
q_{\mathrm{DGM}}(a, b, c) \;\ge\; \frac{(\ln(c))^{1/4}}{4\,(\ln (2)/4)^{1/4}}.
\]
\end{theorem}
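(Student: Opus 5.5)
The plan is a case analysis on the factorization of $c=p+2$, evaluating equation (\ref{eq3}) directly and bounding the denominator from above.

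First I will verify that $(2,p,p+2)$ is an $abc$-triple for every $p\in S$. Here $p$ is odd ($p=2$ gives $c=4$, which is not coprime to $a=2$; I exclude it, or note that $S$ may be taken to consist of odd primes). Then $p+2$ is odd, and $\gcd(p,p+2)\mid 2$ forces $\gcd(p,p+2)=1$. Consequently the primes dividing $c$ are distinct from both $2$ and $p$.

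By Theorem \ref{T_Chen} there are three cases.
\begin{enumerate}
\item $c$ is prime.
\item $c=q^2$ for a prime $q$.
\item $c=qr$ with distinct primes $q,r$.
\end{enumerate}

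Case (3) is the one that dictates the shape of the bound. Here $\omega=4$, and (\ref{eq3}) gives
\[
q_{\mathrm{DGM}}=\frac{\ln c}{4\,(\ln 2\cdot\ln p\cdot\ln q\cdot\ln r)^{1/4}}.
\]
I bound $\ln p<\ln c$. By AM--GM, $\ln q\cdot\ln r\le\bigl(\tfrac{\ln q+\ln r}{2}\bigr)^2=(\ln c)^2/4$. The denominator is therefore at most $4\,(\ln 2/4)^{1/4}(\ln c)^{3/4}$, which yields exactly the stated lower bound $(\ln c)^{1/4}/\bigl(4(\ln 2/4)^{1/4}\bigr)$.

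Cases (1) and (2) both have $\omega=3$.
\begin{itemize}
\item In case (1), $\ln p<\ln c$ gives a denominator of at most $3(\ln 2)^{1/3}(\ln c)^{2/3}$, so $q_{\mathrm{DGM}}\ge(\ln c)^{1/3}/\bigl(3(\ln 2)^{1/3}\bigr)$.
\item In case (2), $\ln q=\tfrac12\ln c$ gives the even larger bound $(\ln c)^{1/3}/\bigl(3(\ln 2/2)^{1/3}\bigr)$.
\end{itemize}
It then suffices to show that the case-(1) bound dominates the target bound. Rearranging, this is the inequality
\[
(\ln c)^{1/12}\ge \tfrac34(\ln 2)^{1/3}(\ln 2/4)^{-1/4}\approx 1.03,
\]
which holds once $\ln c\gtrsim 1.41$, i.e.\ for every $c\ge 5$. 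That covers all $p\in S$ with $p\ge 3$.

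The limsup conclusion follows because $|S|=\infty$ by Theorem \ref{T_Chen}. This gives infinitely many distinct $c$ with $s_c\ge q_{\mathrm{DGM}}(2,p,p+2)$, and the lower bound tends to $\infty$ along them.

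The main obstacle is not asymptotic but uniformity. The statement asserts the inequality for \emph{all} $p\in S$, so the $\omega=3$ cases must be shown to beat the weaker $\omega=4$ exponent already at small $c$, not merely eventually. I would settle this with the explicit threshold computation above, checking the smallest instance $(2,3,5)$ by hand: it gives $\approx 0.505$ against a bound of $\approx 0.436$. I would also take care that the $P_2$ case allows $q=r$, which is why case (2) is treated separately rather than folded into case (3).
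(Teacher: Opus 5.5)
Your proposal is correct and follows essentially the same route as the paper. Both split on whether $p+2$ is prime, $q^2$, or $qr$ with $q<r$, and both use AM--GM ($\ln q\ln r\le(\ln c)^2/4$) together with $\ln p<\ln c$ in the $\omega=4$ case. Your explicit threshold $(\ln c)^{1/12}\ge\tfrac34(\ln 2)^{1/3}(\ln 2/4)^{-1/4}\approx 1.03$, valid for all $c\ge 5$, actually tightens the paper, which only asserts the $\omega=3$ comparison for sufficiently large $c$ even though the statement claims it for every $p\in S$ (with $p=2$ excluded in both). One small numerical slip: $q_{\mathrm{DGM}}(2,3,5)\approx 0.501$, not $0.505$.
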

\begin{proof}
Fix some $p\in S$ with $p\ge 3$; Theorem \ref{T_Chen} guarantees infinitely many such $p$. Note that either $p + 2$ lies in $\mathbb{P}$, or it is a $P_2$ number. In either case, since $p, p + 2$ are odd, the elements $2, p, p + 2$ are pairwise coprime, such that $(a, b, c) := (2, p, p + 2)$ is an $abc$-triple. We now split into two cases. 

The first case is when $p + 2$ is prime. In this case, we simply apply the argument of Theorem \ref{T_twin_DGM} since $\omega = 3$ in this case, we have
\[
q_{\mathrm{DGM}}(a, b, c) \ge \frac{(\ln c)^{1/3}}{3\,(\ln(2))^{1/3}} \ge \frac{\ln(c)^{1/4}}{4 (\ln(2)/4)^{1/4}},
\]
where the latter inequality holds for sufficiently large $c$.

In the latter case, we have that $p + 2 \in P_2$, and hence $p + 2 = qr$ for $q, r$ prime and, without loss of generality, $q \leq r$. Note that since $p+2$ is odd, both $q, r$ are odd; additionally, we have $p \nmid p + 2$, since $p \ge 3$, and hence $p$ does not lie in the set $\{q, r\}$. Therefore, the primes dividing $abc=2pqr$ are exactly $\{2,p,q,r\}$, which give $\omega = 4$. We note that the subcase $q = r$ collapses to $\omega = 3$ and gives an even stronger bound than the twin prime case above, so we assume $q < r$ for the remainder of the derivation. Applying (\ref{eq3}) gives
\[
q_{\mathrm{DGM}}(a, b, c) = \frac{\ln c}{4({\ln 2\cdot\ln p\cdot\ln q\cdot\ln r})^{1/4}}.
\]
Noting that $\ln(q) + \ln(r) = \ln(qr) = \ln(c)$, AM-GM applied to $\ln(q), \ln(r)$ gives $\sqrt{\ln(q) \ln(r)} \leq \ln(c)/2$. Hence, together with $p < c$, we have
\[
\ln (2)\cdot\ln (p)\cdot\ln (q)\cdot\ln (r) \;\le\; \frac{\ln (2)}{4}\,(\ln (c))^3.
\]
Therefore, taking fourth roots,
\[
q_{\mathrm{DGM}}(a,b,c) \;\ge\; \frac{\ln (c)}{4\,(\ln(2)/4)^{1/4}(\ln (c))^{3/4}} \;=\; \frac{(\ln (c))^{1/4}}{4\,(\ln (2)/4)^{1/4}},
\]
and the desired inequality holds in either case. Since $|S|=\infty$, $c=p+2$ is unbounded along such triples corresponding to $p\in S$, so the right-hand side tends to $\infty$. Thus, $\limsup_{c\to\infty}s_c=\infty$ and Theorem \ref{C_abc_DGM} holds.
\end{proof}
We remark briefly on the exponent in this proof. Our exponent on the logarithmic term is $1/4$, which derives from the AM-GM usage; however, the proof of Chen's theorem implies that the small prime factor $q$ in the proof above satisfies that $\log(q)/\log(p + 2)$ is bounded above by $1/2 + o(1)$.\footnote{This follows from asymptotics under the requirement $p^{1/10} < q \leq p^{1/2} \leq r \leq p^{0.9}$ \cite{chen1973}.} Under such size control of $q$, the bound improves to $q_{\text{DGM}} (a, b, c) \sim (\ln(c))^{1/3 - \eps}$ along this family, which matches those of the twin prime family. We leave a fully unconditional refinement of the critical exponent of divergence of the DGM quality as an open problem.\footnote{One might define a \textit{running average} $r(c_0) := \sum_{x \in A} q_{DGM}(x)/|A|$, where $A = \{\text{triples\ } (a, b, c): c \leq c_0\}$; this experimentally tends to a constant near $0.528$ as $c \ra \infty$.}

\subsection{Selected Experimental Observations}

Figure \ref{fig:comparison} demonstrates the evaluation of $q_{\mathrm{DGM}}$ on the classes of triples in the previous section, with the highest value being $4543.95$ from the Mersenne family. Table \ref{Table_2} lists all triples $(1,b,c)$ with $c\le 140{,}000$ and $q_{\mathrm{DGM}}\ge 1.6$; we note that the maximum $\omega$ is $4$, such that triples with small overall numbers of prime divisors are prioritized. Table \ref{T_comparing_abc_to_dgm}, comparatively, evaluates the known triples corresponding to $\arg \max_{(a, b, c)} q_s(a, b, c)$ for the standard quality under $q_{\mathrm{DGM}}$: all are DGM-hits, but \emph{none} are of extremely high DGM quality.

\begin{figure}[!t]
    \centering
    \includegraphics[width=0.6\linewidth]{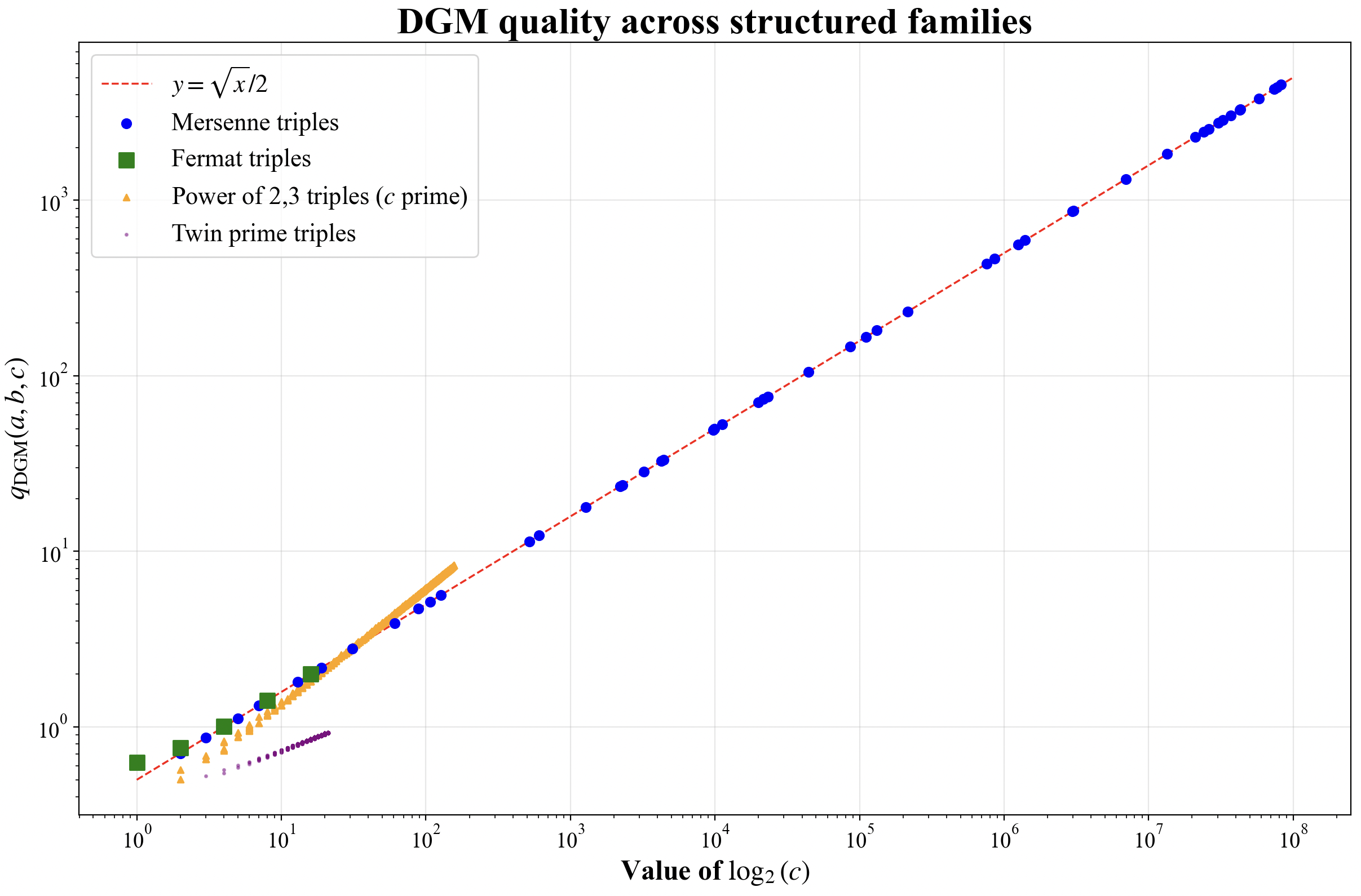}
    \caption{Comparative attainment of high-quality triples through distinct methods, enumerated in Section \ref{S_families}. The Mersenne method yields significantly higher quality triples due to \textit{known} enumerated Mersenne primes.}
    \label{fig:comparison}
\end{figure}

\begin{table}[t]
\centering\small
\begin{tabular}{|c|c|c|c|c|}
\hline
$a$ & $b$ & $c$ & $\omega$ & $q_{\mathrm{DGM}}$ \\
\hline
1 & 4374   & 4375   & 4 & 1.68658 \\
\textcolor{blue}{1} & \textcolor{blue}{8191}   & \textcolor{blue}{8192}   & \textcolor{blue}{2} & \textcolor{blue}{1.80279} \\
1 & 13121  & 13122  & 3 & 1.63528 \\
1 & 23327  & 23328  & 3 & 1.70077 \\
1 & 52488  & 52489  & 3 & 1.79101 \\
1 & 59048  & 59049  & 4 & 1.6593  \\
1 & 62207  & 62208  & 3 & 1.80963 \\
\textcolor{red}{1} & \textcolor{red}{65536}  & \textcolor{red}{65537}  & \textcolor{red}{2} & \textcolor{red}{2 (rd.)} \\
1 & 73727  & 73728  & 3 & 1.82815 \\
\textcolor{blue}{1} & \textcolor{blue}{131071}   & \textcolor{blue}{131072}   & \textcolor{blue}{2} & \textcolor{blue}{2.06155} \\
1 & 139967 & 139968 & 3 & 1.8972  \\
1 & 139968 & 139969 & 3 & 1.8972 \\
\hline
\end{tabular}
\caption{Selected triples with $q_{\mathrm{DGM}}\ge 1.6$, $a=1$, $c\le 140{,}000$. Blue triples correspond to Mersenne triples, and the red triple is the largest known Fermat triple.}
\label{Table_2}
\end{table}

\begin{table*}[t]
    \centering
    \begin{tabular}{|c|c|c|c|c|c|}
        \hline
        $a$ & $b$ & $c$ & \textbf{Standard Quality} & \textbf{DGM Quality} & $\omega$ \\
        \hline
        $2$ & $3^{10} \cdot 109$ & $23^5$ & $1.6299$ & $2.1424$ & $4$ \\
        $11^2$ & $3^2 \cdot 5^6 \cdot 7^3$ & $2^{21} \cdot 23$ & $1.6260$ & $1.8226$ & $6$ \\
        $19 \cdot 1307$   & $7 \cdot 29^2 \cdot 31^8$ & $2^8 \cdot 3^{22} \cdot 5^4$ & $1.6235$ & $2.0388$ & $8$ \\
        $283$ & $5^{11} \cdot 13^{2}$ & $2^8 \cdot 3^8 \cdot 17^3$ & $1.5808$ & $1.9809$ & $6$ \\
        $1$ & $2 \cdot 3^7$ & $5^4 \cdot 7$ & $1.5679$ & $1.6866$ & $4$ \\
        $7^3$ & $3^{10}$ & $2^{11} \cdot 29$ & $1.5471$ & $1.8386$ & $4$ \\
        $7^2 \cdot 41^2 \cdot 311^3$ & $11^{16}\cdot 13^2 \cdot 79$ & $2\cdot3^3\cdot5^{23}\cdot953$ & $1.5444$ & $1.9180$ & $10$\\
        \hline
    \end{tabular}
    \caption{Best $q_s$ triples under $q_{\mathrm{DGM}}$. The seventh-highest $q_s$ triple has surprisingly low $q_{\mathrm{DGM}}$ due to large $\omega$.}
    \label{T_comparing_abc_to_dgm}
\end{table*}

\vspace{-3mm}

\section{DGM Quality Class}
\label{S_DGM_Class}

\subsection{Motivation and Definition}

Noting that the DGM Quality asymptotically tends to infinity, while the standard quality conjecturally tends to $1$ in a $\lim\sup$ fashion, we produce tunable variants of the DGM quality using parameters $\alpha, \beta$, as follows.

\begin{definition}[DGM Quality Class]\label{D_DGM_class}
For specific $\alpha\ge 0$, $\beta>0$ and $U=\{p_i^{\omega^\alpha}:p_i \mid abc, p_i \in \mathbb{P}\}$, we set
\[
q_C(a,b,c;\alpha,\beta) := \frac{\ln(c)}{\bigl(\ln(\mathrm{DGM}(U))\bigr)^\beta}.
\]
\end{definition}
Note that $q_C(\cdot;1,1)$ reduces to $q_{\mathrm{DGM}}$, and hence $\alpha$ corresponds to a finer influence of $\omega$ itself, whereas $\beta$ controls overall growth of the denominator.

Fix $\alpha, \beta$ as above. By noting that
\begin{align*}
    \ln(\mathrm{DGM}(U)) &= \exp\bigl(\tfrac{1}{\omega}\textstyle\sum_i\ln\ln(p_i^{\omega^\alpha})\bigr)\\
&= \exp\bigl(\alpha\ln\omega+\tfrac{1}{\omega}\textstyle\sum_i\ln\ln p_i\bigr)\\
&= \omega^\alpha\cdot\sqrt[\omega]{\textstyle\prod_i\ln p_i},
\end{align*}
it follows that 
\begin{align}
\label{eq_primefact}
    q_C(a,b,c;\alpha,\beta) = \frac{\ln c}{\bigl(\omega^\alpha\cdot\sqrt[\omega]{\prod_{i=1}^\omega \ln p_i}\bigr)^\beta}
\end{align}
over all $abc$-triples ($a, b, c$), per a similar rewriting of our DGM quality. Therefore, an analogous result to Theorem \ref{T_13} may be established.

\begin{theorem}\label{T_6.3}
For all $abc$-triples and $\alpha \in [0,1]$, $\beta>0$,
\[
q_C(a,b,c;\alpha,\beta) > \frac{\ln(c)}{(\omega^{\alpha - 1} \ln(abc))^{\beta}} \ge \frac{1}{3(\omega^{\alpha-1})^\beta}>\frac
{1}{3}.
\]
\end{theorem}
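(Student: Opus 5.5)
The plan is to follow the proof of Theorem \ref{T_13}, starting from the rewritten form (\ref{eq_primefact}). Write $G=\sqrt[\omega]{\prod_{i=1}^\omega \ln p_i}$, so that the denominator of $q_C$ is $(\omega^\alpha G)^\beta$.

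\textbf{First inequality.} Apply AM-GM to the positive reals $\ln p_1,\dots,\ln p_\omega$ to get $G\le \tfrac{1}{\omega}\sum_i \ln p_i$. Lemma \ref{L_ln_primes_DGM} applied to $s=abc$ then gives $\sum_i\ln p_i\le \ln(abc)$. Since $t\mapsto t^\beta$ is increasing for $\beta>0$, combining these yields
\[
(\omega^\alpha G)^\beta \le \bigl(\omega^{\alpha-1}\ln(abc)\bigr)^\beta .
\]
The strictness should come from AM-GM. The primes $p_i$ are distinct, so the $\ln p_i$ are pairwise distinct, and AM-GM is strict as soon as $\omega\ge 2$. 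Now $\omega\ge2$ holds for every $abc$-triple except $(1,1,2)$, since $c\ge 2$ and $b\ge 2$ are coprime when $c\ge3$. That degenerate triple gives $\omega=1$ and equality throughout, so it must either be excluded explicitly or the first ``$>$'' weakened to ``$\ge$''. I would state this case separately.

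\textbf{Second and third inequalities.} Next use $a,b<c$, so that $\ln(abc)<3\ln c$. This gives
\[
\frac{\ln c}{\bigl(\omega^{\alpha-1}\ln(abc)\bigr)^\beta} > \frac{\ln c}{3^\beta(\ln c)^\beta\,(\omega^{\alpha-1})^\beta} = \frac{(\ln c)^{1-\beta}}{3^\beta(\omega^{\alpha-1})^\beta}.
\]
For the final step, $\alpha\le 1$ and $\omega\ge1$ give $\omega^{\alpha-1}\le 1$, hence $(\omega^{\alpha-1})^\beta\le1$. So the last lower bound is at least $1/3$, and it is strictly larger whenever $\omega\ge2$ and $\alpha<1$. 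When $\alpha=1$ it equals $1/3$, so strictness must instead be carried over from the earlier strict step.

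\textbf{Main obstacle.} The delicate step is matching the middle expression $1/\bigl(3(\omega^{\alpha-1})^\beta\bigr)$ for general $\beta$, because the computation above produces the extra factor $(\ln c)^{1-\beta}/3^{\beta-1}$. For $\beta=1$ this factor is $1$ and everything matches exactly. For $\beta<1$ the factor is at least $1$ once $3\le \ln c$, and in general one needs a lower bound on $\ln c$ such as $\ln c\ge \ln 2$. For $\beta>1$ the factor decays as $c\to\infty$, so the chain cannot hold uniformly in $c$.

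My plan is therefore to prove the stated chain rigorously for $\beta=1$ and to verify the range $0<\beta\le 1$ by checking the finitely many small-$c$ triples. For $\beta>1$ I would flag that the second inequality needs to be replaced by the bound $(\ln c)^{1-\beta}/\bigl(3^\beta(\omega^{\alpha-1})^\beta\bigr)$ that the argument actually delivers. This is where I expect the statement may need reinterpretation.
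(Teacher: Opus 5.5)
Your argument is the paper's argument: AM--GM on the $\ln p_i$, then Lemma~\ref{L_ln_primes_DGM}, then $\ln(abc)<3\ln c$ and $\omega^{\alpha-1}\le 1$. Your caveat about $\beta$ is correct, and it pinpoints a real flaw in the paper's own proof. The paper's closing substitution silently takes $\beta=1$; in general it yields $(3\ln c)^{1-\beta}/\bigl(3(\omega^{\alpha-1})^{\beta}\bigr)$, not $1/\bigl(3(\omega^{\alpha-1})^{\beta}\bigr)$.

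Two sharpenings.

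For $0<\beta<1$ you need no finite check. The extra factor is exactly $(3\ln c)^{1-\beta}$, which is at least $1$ for every $c\ge 2$ because $3\ln 2>1$. Your threshold $\ln c\ge 3$ is far stronger than necessary.

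For $\beta>1$ the statement is actually false, not just out of reach of this method. The middle inequality is equivalent to $(\ln abc)^{\beta}\le 3\ln c$. Since $\ln(abc)\ge\ln c$, it fails as soon as $(\ln c)^{\beta-1}>3$. Even the final conclusion $q_C>1/3$ fails:
\begin{itemize}
\item $(1,2,3)$ at $\alpha=1$, $\beta=3$ gives $q_C=\ln 3/\bigl(2\sqrt{\ln 2\,\ln 3}\bigr)^{3}\approx 0.207$;
\item $(5,6,11)$ at $\alpha=1$, $\beta=6/5$ gives $q_C\approx 0.329$.
\end{itemize}

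So the correct result is the chain for $0<\beta\le 1$, with the first inequality non-strict at $(1,1,2)$ and the last one an equality when $\alpha=1$. That is exactly the version you propose to prove.
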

\begin{proof}
We apply AM-GM, such that $\sqrt[\omega]{\prod\ln p_i}\le\tfrac{1}{\omega}\sum\ln p_i$; Lemma \ref{L_ln_primes_DGM} gives that $q_C(a, b, c; \alpha, \beta) > \ln(c)/((\omega^{\alpha - 1} \ln(abc))^{\beta})$. This bound holds independent of the restriction on $\alpha$. Additionally, substituting $\ln(abc) < 3 \ln(c)$ and $\omega^{\alpha - 1} \leq 1$ for $\alpha \leq 1$, we have the desired inequality.
\end{proof}

We may also establish a few elementary results with respect to the variation of $\alpha$ while $\beta$ is fixed. Indeed, for $0 \leq \alpha_1 < \alpha_2$ and some fixed $\beta > 0$, we have that $q_C(\cdot; \alpha_1, \beta) > q_C(\cdot; \alpha_2, \beta)$, since the denominator of equation (\ref{eq_primefact}) is smaller for $\alpha_1$ while $\beta$ is held constant. Indeed, for the extreme case $\alpha = 0$, we observe $q_C(a,b,c;0,1)=\omega\cdot q_{\mathrm{DGM}}$. In this case, we observe that the quality $q_C(a,b,c;0,1)$ diverges (as a consequence of Theorem \ref{C_abc_DGM}) and is bounded below by $1$ (independently resolving the cases $\omega \ge 3$, where we apply Theorem \ref{T_13}; $\omega = 2$, where we note that Theorem \ref{T_6.3} with $\alpha = 0$ yields $q_C(a, b, c;0, 1) > \omega/2 = 1$; and $\omega = 1$, which gives a single triple). Additionally, a natural extension of Theorem \ref{T_3_4} gives $q_C(a,b,c;0,1)\ge\sqrt{n}$ for Mersenne triples, and similarly for triples of the form in Theorem \ref{T_Chen_DGM}, such that divergence is guaranteed.

We may also consider the variation of $\beta$ under fixed values of $\alpha$. We note that the increase of $\beta$ slows growth of the quality metric monotonically. That is, $\beta=2$ keeps $q_C(\cdot;1,2)\le 2$ for all examined triples; the high-quality triples for $\beta=2$ thereby form a refined subset of those for $\beta=1$. Thus, the asymptotic control of $\beta$ lends itself to the following conjecture.

\begin{conjecture}\label{Smooth_class_ABC_Analogue}
For every $\alpha\ge 0$, there exists $\beta=\beta(\alpha)>0$ such that
\[
\limsup_{c\to\infty}\max\{q_C(a,b,c;\alpha,\beta):(a,b,c)\text{ abc-triple}\}=L
\]
for some finite, nonzero $L$.
\end{conjecture}

\subsection{Phase Transitions in DGM Parameter Regimes}
\label{S_pt}
Conjecture \ref{Smooth_class_ABC_Analogue} asserts that, for each $\alpha$, there exists some $\beta$ giving a finite, nonzero limit. We investigate partial justifications of this claim through variants with respect to specific values of $\beta$ that give \textit{phase transitions} in the asymptotic behavior of the DGM quality class. We harness explicit families from Section \ref{S_families}, identifying a \textit{critical} $\beta$ from which the asymptotic quality transitions from divergence to convergence to a finite value.

Let $\mathcal{F}$ be a family of $abc$-triples such that $\omega(abc)=\omega$ is constant (motivated by Section \ref{S_DGM_Class}), and $k$ of the $\omega$ primes used grow on the order of $\Theta(c)$, such that the other $\omega-k$ primes are uniformly bounded. Call this the $(\omega,k)$-\textit{condition}. For instance, Mersenne triples have $(\omega,k)=(2,1)$; twin-prime triples $(2,p,p+2)$ satisfy $(\omega, k) = (3,2)$; and power of $2,3$ triples (with $c$ prime) have $(\omega, k) = (3,1)$. We utilize this principle to establish the following.

\begin{theorem}[Phase transition for $q_C$ Class]\label{T_phase_transition}
Suppose there exists an infinite family $\mathcal{F}$ of $abc$-triples satisfying the $(\omega,k)$-condition, for fixed $\omega, k$. Then, it follows that the critical exponent is $\beta_0=\omega/k$, such that
\[
\lim_{c\to\infty,\,(a,b,c)\in\mathcal{F}} q_C \;=\; \begin{cases} \infty & \beta<\omega/k,\\[2pt] \dfrac{1}{\omega^{\alpha\omega/k} \cdot K^{1/k}} & \beta=\omega/k,\\[6pt] 0 & \beta>\omega/k, \end{cases}
\]
for a finite constant $K$.
\end{theorem}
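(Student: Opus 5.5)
We will work directly from the closed form (\ref{eq_primefact}), which expresses $q_C(a,b,c;\alpha,\beta)$ as $\ln c$ divided by $\omega^{\alpha\beta}\bigl(\prod_{i=1}^{\omega}\ln p_i\bigr)^{\beta/\omega}$. Along the family $\mathcal{F}$ the prime count $\omega$ is constant, so the factor $\omega^{\alpha\beta}$ is a fixed constant. The whole problem therefore reduces to finding the asymptotic size of $\prod_i \ln p_i$ in terms of $\ln c$.

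First, we split the $\omega$ primes dividing $abc$ into two groups.
\begin{itemize}
\item \emph{The $k$ large primes.} The $(\omega,k)$-condition gives $p_i=\Theta(c)$, that is, $C_1 c\le p_i\le C_2 c$ for fixed constants $C_1,C_2>0$. Hence $\ln p_i=\ln c+O(1)=\ln c\,(1+o(1))$ as $c\to\infty$. This is the same approximation used in the proof of Theorem \ref{T_twin_DGM}, where $\ln(p+2)=\ln p+\mathcal{O}(1/p)$.
\item \emph{The $\omega-k$ bounded primes.} These are uniformly bounded, so they range over a finite set of primes. We set $K:=\prod_{\text{bounded } p_i}\ln p_i$.
\end{itemize}
To make $K$ a genuine constant, we note that only finitely many choices of the bounded primes are possible. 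So either the bounded primes are fixed along $\mathcal{F}$ (as in the Mersenne, twin-prime and power-of-$2,3$ examples, where they are $\{2\}$, $\{2\}$ and $\{2,3\}$), or, by pigeonhole, we pass to an infinite subfamily on which they are fixed. We interpret the statement with respect to such a subfamily.

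Combining the two groups gives
\[
\prod_{i=1}^{\omega}\ln p_i = K\,(\ln c)^k\,(1+o(1)).
\]
Substituting into (\ref{eq_primefact}) yields
\[
q_C=\frac{(\ln c)^{1-k\beta/\omega}}{\omega^{\alpha\beta}K^{\beta/\omega}}\,(1+o(1)).
\]
The trichotomy now follows by reading off the sign of the exponent $1-k\beta/\omega$:
\begin{itemize}
\item if $\beta<\omega/k$, the exponent is positive and, since $\ln c\to\infty$ along the infinite family, $q_C\to\infty$;
\item if $\beta>\omega/k$, the exponent is negative and $q_C\to 0$;
\item if $\beta=\omega/k$, the power of $\ln c$ cancels, and the limit is $\omega^{-\alpha\omega/k}K^{-1/k}$, since $\beta/\omega=1/k$.
\end{itemize}

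The main obstacle is rigor in the first two steps rather than any hard estimate. We must check that $\Theta(c)$ growth is strong enough: the relative error $O(1)/\ln c$ must vanish, and it does. We must also deal with the possibility that the bounded primes vary, which would make ``$K$'' non-constant and could make the limit at $\beta=\omega/k$ fail to exist. We handle the latter by the pigeonhole reduction to a subfamily with fixed bounded primes, or by assuming this is part of the $(\omega,k)$-condition, as in all the paper's examples. In the divergent and vanishing regimes, boundedness of $K$ between $(\ln 2)^{\omega-k}$ and a fixed constant already suffices without any subsequence.
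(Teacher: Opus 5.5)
Your proposal is correct and follows essentially the same route as the paper. You split the primes into the $\omega-k$ bounded ones, with $K$ the product of their logarithms, and the $k$ primes with $\ln q_j=\ln c\,(1+o(1))$; you substitute into (\ref{eq_primefact}) to get $\omega^{-\alpha\beta}K^{-\beta/\omega}(\ln c)^{1-k\beta/\omega}(1+o(1))$; and you read the trichotomy off the sign of the exponent. Your pigeonhole reduction to a subfamily with fixed bounded primes is a worthwhile extra bit of care: the paper simply asserts that $K$ is constant along $\mathcal{F}$, but the $(\omega,k)$-condition as stated only makes those primes bounded, not fixed, which matters for the existence of the limit at $\beta=\omega/k$.
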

\begin{proof}
Let $(a, b, c) \in \mathcal{F}$, and write the primes dividing $abc$ as $\{p_1,\dots,p_{\omega-k},q_1,\dots,q_k\}$, such that the primes $p_i$ are all bounded and we have that $\ln(q_j) = \ln (c) \cdot (1 + o(1))$ for each $j \in \{1, \dotsc, k\}$. Set $K := \prod_{i=1}^{\omega-k}\ln p_i $, which is constant with respect to $(a, b, c)$. Applying (\ref{eq_primefact}) gives
\[
q_C(a, b, c; \alpha, \beta) = \frac{\ln (c)}{\bigl(\omega^\alpha\sqrt[\omega]{\prod_i\ln (p_i)\cdot\prod_j\ln (q_j)}\bigr)^\beta}.
\]
Since $\ln q_j=\ln c\cdot(1+o(1))$ for $j=1,\dots,k$, the inner product factors as $\textstyle\prod_{i}\ln(p_i) \prod_j \ln(q_j) = K\cdot(\ln (c))^k\cdot(1+o(1))$. Substitution gives
\begin{align*}
q_C(a, b, c; \alpha, \beta) &= \frac{\ln (c)}{\omega^{\alpha\beta}\,\left(\prod_{i=1}^{\omega-k}\ln p_i \right)^{\beta/\omega}(\ln (c))^{k\beta/\omega} (1+o(1))}\\
 &= \omega^{-\alpha\beta}K^{-\beta/\omega}(\ln (c))^{1-k\beta/\omega}(1+o(1)).
\end{align*}
Noting that all terms but the $\ln(c)$ term are constant in the resulting expression, it follows that $\mathrm{sgn}(1-k\beta/\omega)$ determines the asymptotic behavior of $q_C(\cdot; \alpha, \beta)$ on such triples. Indeed, at $\beta=\omega/k$, $\ln(c)$ vanishes and the limit equals $\omega^{-\alpha\omega/k}K^{-1/k}$, as desired. Note that, assuming this result, the resulting $\lim\sup q_C(a, b, c_0)$ over triples $(a, b, c_0)$, as $c_0 \ra \infty$, must tend to $\infty$ for $\beta < \omega/k$ and must be bounded below by $\omega^{- \alpha \omega/k} K ^{-1/k}$ at $\beta = \omega/k$.
\end{proof}

\begin{corollary}\label{C_4_5}
At the critical $\beta_0=\omega/k$, the (conditional) limit values along specified families are as follows:
\end{corollary}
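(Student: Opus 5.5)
The corollary is obtained by specializing Theorem \ref{T_phase_transition} to the three explicit families already introduced in Section \ref{S_families}. For each family I will identify the pair $(\omega,k)$ and the set of bounded primes, and then read off the constant $K=\prod_{i=1}^{\omega-k}\ln p_i$. The limit at the critical exponent $\beta_0=\omega/k$ is then $\omega^{-\alpha\omega/k}K^{-1/k}$. Because the theorem's hypothesis is the infinitude of the family, each limit is conditional on exactly that infinitude: Mersenne primes, primes of the form $2^i+3^j$, and twin primes, respectively.

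\textbf{Mersenne triples} $(1,2^n-1,2^n)$. Here $\omega=2$, the bounded prime is $2$, and $2^n-1=c-1$ grows like $c$, so $k=1$. This gives $K=\ln 2$ and $\beta_0=2$, so the limit is $2^{-2\alpha}(\ln 2)^{-1}$. In this case I can also check it exactly rather than only asymptotically. From (\ref{eq_primefact}),
\[
q_C=\frac{\ln c}{2^{2\alpha}\ln 2\cdot\ln(c-1)},
\]
and this tends to the stated value since $\ln(c-1)/\ln c\to 1$.

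\textbf{Power of $2,3$ triples} $(2^i,3^j,c)$ with $c$ prime. Here $\omega=3$, the bounded primes are $2$ and $3$, and $k=1$. This gives $K=\ln 2\ln 3$ and $\beta_0=3$, so the limit is $3^{-3\alpha}(\ln 2\ln 3)^{-1}$. The computation is again exact, since $\ln c$ cancels identically. The same pattern extends to the general fixed-prime families of Theorems \ref{T_power_of_p_q} and \ref{T_main}, with $K$ the product of the logarithms of the fixed primes.

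\textbf{Twin-prime triples} $(2,p,p+2)$. Here $\omega=3$ and $k=2$, with $K=\ln 2$ and $\beta_0=3/2$, so the limit is $3^{-3\alpha/2}(\ln 2)^{-1/2}$. This is the one case where the $(1+o(1))$ factor genuinely matters. I will handle it through the estimate $\ln p=\ln c\,(1+O(1/(c\ln c)))$, as in the proof of Theorem \ref{T_twin_DGM}.

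The main obstacle is a mismatch between the hypothesis of Theorem \ref{T_phase_transition} and these families. The theorem requires each growing prime to be of order $\Theta(c)$, but what the proof actually uses is only $\ln q_j=\ln c\,(1+o(1))$. In the $p^i+q^j$ families only $c$ itself grows; the growing exponents enter through $c$ alone, so the condition holds. For Chen-type triples $(2,p,qr)$, however, $\ln q$ and $\ln r$ are not asymptotic to $\ln c$, and the theorem does not apply directly. I would therefore state explicitly that the corollary covers only families in which each varying prime has logarithm asymptotic to $\ln c$, and exclude the $P_2$ case or treat it as a remark. I expect verifying this logarithmic condition family by family to be the only nontrivial step.
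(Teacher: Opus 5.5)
Your proposal is correct and follows the paper's route: the corollary is just Theorem \ref{T_phase_transition} with $(\omega,k,K)=(2,1,\ln 2)$, $(3,1,\ln 2\ln 3)$, $(3,2,\ln 2)$ substituted into $\omega^{-\alpha\omega/k}K^{-1/k}$, and your direct checks via (\ref{eq_primefact}) reproduce the same limits. The obstacle you anticipate does not arise here: the growing primes in the three tabulated families ($c-1$; $c$; $c-2$ and $c$) are all trivially $\Theta(c)$, and the Chen-type family is not part of the corollary. One small correction: the twin-prime case is not the only one where the $(1+o(1))$ factor matters, since the Mersenne case likewise needs $\ln(c-1)/\ln c\to 1$.
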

\begin{center}\small
\begin{tabular}{|l|c|c|c|c|}
\hline
Family & $\omega$ & $k$ & $\beta_c$ & $\lim_{c} q_C$ at $\beta_0$ \\
\hline
Mersenne Triple & 2 & 1 & $2$ & $1/(2^{2\alpha}\ln(2))$ \\
Power of $2,3$ Triples & 3 & 1 & $3$ & $1/(3^{3\alpha}\ln(2) \ln(3))$ \\
Twin primes $(2,p,p\!+\!2)$ & 3 & 2 & $3/2$ & $1/(3^{3\alpha/2}(\ln(2))^{1/2})$ \\
\hline
\end{tabular}
\end{center}

The above corollary follows from direct substitution into Theorem \ref{T_phase_transition} under appropriate choices of $K$. Thus, the DGM Quality class  provides a parametrized tuning with respect to the families in Section \ref{S_DGM_Quality}, such that for suitable $\beta$, the analogue of Conjecture \ref{Smooth_class_ABC_Analogue} appears to hold. The significance of Theorem \ref{T_phase_transition} is therefore as follows: the critical $\beta_0$ effectively gives us a \textit{saturation condition} for the relevant families of triples, comparing the number of primes \textit{used} for the family $(\omega$) to the number that grow ($k$). As $\beta$ is increased, families progressively saturate in a manner of pruning the highest-quality triples; indeed, crossing $\beta \ge \beta_0 = \omega/k$ for the corresponding family ensures that such a family is not considered within the corresponding $\limsup_{c \ra \infty} q_C(\cdot, c; \cdot,  \beta)$ in Conjecture \ref{Smooth_class_ABC_Analogue}. 

Although this result does not establish Conjecture \ref{Smooth_class_ABC_Analogue}, it provides a lower bound on $s_c$ for each such family; extending this result to a complete proof would require control of $\omega(abc)$ for all triples, perhaps through Evertse's S-unit equation theorem \cite{granville2002}. In the restricted case of $\omega = 2$, however, an upper bound may be demonstrated. We now establish this $\limsup$ result in this case for the DGM Class, invoking Milailescu's Theorem, as follows.

\begin{theorem}[$\omega=2$ Situation]\label{T_omega2_trichotomy}
For $\alpha\ge 0$, let $s_c^{(2)}(\alpha,\beta) := \sup\{q_C(a,b,c;\alpha,\beta):\omega(abc)=2\};$ then,
\[
\limsup_{c\to\infty} s_c^{(2)}(\alpha,\beta) \;=\;
\begin{cases}
+\infty & \beta<2\\
\frac{1}{2^{2\alpha}\,\ln 2} & \beta=2\\
0 & \beta>2;
\end{cases}
\]
the cases are conditional on the infinitude of Mersenne primes (or, equivalently, of Fermat primes).   
\end{theorem}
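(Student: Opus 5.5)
The plan is to \emph{classify} all $abc$-triples with $\omega(abc)=2$, and then evaluate $q_C$ explicitly along the families that appear.

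\textbf{Classification.} Let $(a,b,c)$ be an $abc$-triple with $\omega(abc)=2$.
\begin{itemize}
\item If $a\ge 2$, then $a,b,c\ge 2$ are pairwise coprime. Each contributes at least one prime not dividing the others, so $\omega\ge 3$. Hence $a=1$.
\item Since $c=b+1\ge 3$ and $\gcd(b,c)=1$, the two primes split as $b=p^m$ and $c=q^n$ with $p\neq q$. So $q^n-p^m=1$.
\item By Mih\u{a}ilescu's theorem (Catalan's conjecture), the only solution with $m,n\ge 2$ is $3^2-2^3=1$, giving the single triple $(1,8,9)$.
\item Otherwise $m=1$ or $n=1$. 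One of $b,c$ is even, so $2\in\{p,q\}$.
  \begin{itemize}
  \item If $c=2^n$, then $b=2^n-1=p^m$. Mih\u{a}ilescu forces $m=1$ unless $(p^m,2^n)=(1,2)$, which is excluded since $\omega=1$ there. So $b$ is a Mersenne prime, giving a Mersenne triple.
  \item If $b=2^m$, then $c=2^m+1=q^n$. Mih\u{a}ilescu forces $n=1$ apart from $(1,8,9)$. So $c$ is a Fermat prime, giving a Fermat triple $(1,2^m,2^m+1)$.
  \end{itemize}
\end{itemize}
Thus, up to one exceptional triple, the $\omega=2$ triples are exactly the Mersenne and Fermat triples. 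This step is where the real content lies; the main care needed is in disposing of the small exponent cases cleanly.

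\textbf{Evaluation.} Apply \eqref{eq_primefact} with $\omega=2$.

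For a Mersenne triple,
\[
q_C=\frac{n\ln 2}{2^{\alpha\beta}\bigl(\ln 2\cdot\ln(2^n-1)\bigr)^{\beta/2}}.
\]
Using $\ln(2^n-1)=n\ln 2\,(1+o(1))$, this equals
\[
2^{-\alpha\beta}(\ln 2)^{-\beta}(n\ln 2)^{1-\beta/2}(1+o(1)).
\]

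For a Fermat triple,
\[
q_C=\frac{\ln c}{2^{\alpha\beta}(\ln 2\cdot \ln c)^{\beta/2}}=2^{-\alpha\beta}(\ln 2)^{-\beta/2}(\ln c)^{1-\beta/2}.
\]
This is the $(\omega,k)=(2,1)$ case of Theorem~\ref{T_phase_transition}.

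\textbf{Trichotomy.} The sign of $1-\beta/2$ gives the three regimes.
\begin{itemize}
\item For $\beta<2$, $q_C$ diverges along either family.
\item For $\beta=2$, $q_C$ tends to $2^{-2\alpha}(\ln 2)^{-1}$.
\item For $\beta>2$, $q_C$ tends to $0$.
\end{itemize}

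\textbf{From limits to $\limsup$.} The classification shows the supremum $s_c^{(2)}$ at each large $c$ is taken over at most one triple, namely a Mersenne or a Fermat triple with that $c$. Therefore $\limsup_{c\to\infty}s_c^{(2)}$ is exactly the limit computed along the union of the two families. This gives both the upper and the lower bounds, not merely a lower bound as in Theorem~\ref{T_phase_transition}. The single exception $(1,8,9)$ does not affect the $\limsup$.

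\textbf{Conditionality.} The values are meaningful (nonvacuous) precisely when one of the two families is infinite. This is the stated hypothesis on the infinitude of Mersenne or of Fermat primes. Without it, $s_c^{(2)}$ is eventually a supremum over the empty set, and I would flag this convention explicitly.
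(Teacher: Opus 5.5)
Your proposal is correct and follows essentially the paper's route. You reduce to $a=1$ by coprimality, split as $b=p^m$, $c=q^n$, use Mih\u{a}ilescu's theorem to isolate $(1,8,9)$ and land on Mersenne and Fermat triples, and then read the trichotomy off the sign of $1-\beta/2$ in \eqref{eq_primefact}. Your explicit remark that the classification also supplies the matching upper bound, so the $\limsup$ is a genuine limit along the union of the two families, sharpens a point the paper leaves implicit.

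Two small repairs are needed:
\begin{itemize}
\item In the Mersenne asymptotic the prefactor should be $(\ln 2)^{-\beta/2}$, not $(\ln 2)^{-\beta}$. As written it would give $2^{-2\alpha}(\ln 2)^{-2}$ at $\beta=2$, contradicting the limit you state.
\item The step ``if $a\ge 2$ then $a,b,c\ge 2$'' needs a WLOG $a\le b$. For example, $(2,1,3)$ has $\omega=2$ and is covered only through the $a\leftrightarrow b$ symmetry.
\end{itemize}
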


\begin{proof}
We first discuss the structure of triples with $\omega = 2$. If $(a, b, c)$ is such a triple, then let the two primes be of the form $\{p_1,p_2\}$, $p_1 \neq p_2$. Since $a,b$ are coprime, we may write $a=p_1^i$ and $b=p_2^j$ for some $i,j\ge 0$ up to relabeling. Hence, we have $c=p_1^i+p_2^j$, which must be of the form $c=p_1^m p_2^n$ for some $m,n\ge 0$. Additionally, the coprime constraint guarantees that either $c = 1$, which is impossible (as $a, b \ge 1$), or that one of $a, b$ must be $1$. In the latter case, assume that $a = 1$, forcing the equation $1 + p_2^j = p_1^m p_2^n$. Thus, either $j = 0$, giving the triple $(1, 1, 2)$, or $n = 0$ by coprimality. In the $n = 0$ situation, we have $b=p_1^m-1=p_2^j$, such that $p_2^j+1=p_1^m$. By Mihailescu's theorem, the only solution to this equation with indices $m,j\ge 2$ is given by $(p_1, m, p_2, j) = (2, 3, 3, 2)$, corresponding to $(a, b, c) = (1, 8, 9)$ \cite{mihailescu2004primary}. The case $j = 1$ gives Mersenne triples by parity considerations, such that $p_1 = 2$ and $p_2 = 2^m - 1$. Moreover, the case $m = 1$ gives Fermat triples by parity on $p_2$, such that $p_2 = 2$ and $p_1 = 2^j + 1$. The situation where $b = 1$ gives identical triples up to symmetry in $a, b$. Hence, the full class of triples of this form are Fermat triples and Mersenne triples, with the exception of $(1, 1, 2)$.

Therefore, an infinite sequence of $abc$-triples with $\omega = 2$ and $c \ra \infty$ must contain an infinite subsequence drawn from either Mersenne triples or Fermat triples. In either case, the primes dividing $abc$ lie in $\{2,p\}$ where $p \sim c$ (up to $\pm 1$). Thus, by equation (\ref{eq_primefact}),
\begin{align*}
q_C (a, b, c) &= \frac{\ln(c)}{\bigl(2^\alpha\sqrt{\ln (2)\cdot\ln (p)}\bigr)^\beta} =  \frac{(\ln (c))^{1-\beta/2}}{2^{\alpha\beta}(\ln (2))^{\beta/2} (1+o(1))},
\end{align*}
by $\ln(p)=\ln(c)(1 + o(1))$. Thus, we have a trichotomy dependent on $\sgn(1 - \beta/2)$: if $\beta < 2$, then $q_C(a, b, c; \alpha, \beta) \ra \infty$ along this family, if it is indeed infinite; if $\beta = 2$, then $q_C \ra 1/(2^{2\alpha}\ln 2)$ along this family; and if $\beta > 2$, then $q_C(a, b, c; \alpha, \beta) \ra_{c \ra \infty} 0$. All three results are conditional on infinitude of this family. Moreover, in the case $\beta < 2$, $\limsup_{c \ra \infty} s_c \ge \limsup_{c \ra \infty} s_c^{(2)}$ forces that $\limsup_{c \ra \infty} s_c = \infty$, conditionally, in this case.
\end{proof}

We note that $\beta_0=2$ in the DGM Class thereby behaves in a functionally analogous manner to the critical value of $\eps \ra 0$ in the standard formulation of the conjecture (Conjecture \ref{C_abc_conjecture}), such that the statement that finitely many triples exceed the critical value corresponds to the conjecture itself.

The $\omega$-restriction in Theorem \ref{T_omega2_trichotomy} is essential; in particular, a bound that is uniform in \textit{all} $\omega$ on a quality that is functionally similar to the standard quality is essentially equivalent to the conjecture itself \cite{waldschmidt2015,granville2002}. However, to make progress on \textit{unconditionally} establishing boundedness claims, we appeal to the results established through Chen's Theorem and in Theorem \ref{T_Chen_DGM}, as well as boundedness conditions that invoke $abc$ itself.\footnote{We remark that Vojta's conjecture would settle the uniform case unconditionally if established \cite{vojta1987}.}
\begin{theorem}[Unconditional lower bound]\label{T_uncond_LB}
For every $\alpha\ge 0$ and all $\beta<4/3$,
\[
\limsup_{c\to\infty} s_c = \infty,
\]
\end{theorem}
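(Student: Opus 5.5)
We use the unconditional Chen family $(2,p,p+2)$ with $p\in S$ from Theorem \ref{T_Chen_DGM}, replacing the DGM quality by $q_C(\cdot;\alpha,\beta)$ through equation (\ref{eq_primefact}). Here $s_c$ means the maximum of $q_C(\cdot;\alpha,\beta)$ over $abc$-triples with the given $c$. It is enough to show that $q_C(2,p,p+2;\alpha,\beta)\to\infty$ along $p\in S$ whenever $\beta<4/3$. Every such triple satisfies $\omega\in\{3,4\}$: $\omega=3$ when $p+2$ is prime or the square of a prime, and $\omega=4$ when $p+2=qr$ with $q<r$. So the factor $\omega^{\alpha\beta}$ in the denominator is at most $4^{\alpha\beta}$, a constant, and $\alpha$ plays no role in the asymptotics.

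\textbf{Case $p+2$ prime.} With $\omega=3$ we have $\prod\ln p_i=\ln 2\cdot\ln p\cdot\ln(p+2)\le \ln 2\,(\ln c)^2$. Hence
\[
q_C\ge \frac{(\ln c)^{1-2\beta/3}}{3^{\alpha\beta}(\ln 2)^{\beta/3}}.
\]
This diverges for $\beta<3/2$, and in particular for $\beta<4/3$.

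\textbf{Case $p+2=q^2$.} Again $\omega=3$, and now $\prod\ln p_i=\ln2\cdot\ln p\cdot\tfrac12\ln c\le \tfrac{\ln 2}{2}(\ln c)^2$. The same bound results, up to the constant.

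\textbf{Case $p+2=qr$ with $q<r$.} Here $\omega=4$. As in the proof of Theorem \ref{T_Chen_DGM}, AM-GM on $\ln q+\ln r=\ln c$ together with $p<c$ gives
\[
\ln2\cdot\ln p\cdot\ln q\cdot\ln r\le \tfrac{\ln 2}{4}(\ln c)^3.
\]
Taking the fourth root, raising to the power $\beta$, and substituting into (\ref{eq_primefact}) yields
\[
q_C\ge \frac{(\ln c)^{1-3\beta/4}}{4^{\alpha\beta}(\ln 2/4)^{\beta/4}}.
\]
This diverges exactly when $\beta<4/3$, which is where the threshold comes from.

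\textbf{Combining the cases.} In every case
\[
q_C\ge C(\alpha,\beta)\,(\ln c)^{\min(1-2\beta/3,\;1-3\beta/4)}=C(\alpha,\beta)\,(\ln c)^{1-3\beta/4},
\]
with a positive exponent for $\beta<4/3$. Theorem \ref{T_Chen} makes $S$ infinite, so $c=p+2$ is unbounded along the family. Therefore $s_c\ge q_C(2,p,c)\to\infty$ along this subsequence, giving $\limsup s_c=\infty$.

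The main subtlety is not a hard estimate. It is checking that the worst case, $\omega=4$ with $q\approx r\approx\sqrt c$, is the only thing setting the threshold, and that the uniform bound requires no size information on $q$ beyond AM-GM. A sharper threshold of $3/2$ would need Chen's size control on $q$, which we deliberately avoid.
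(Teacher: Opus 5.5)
Your proof is correct and uses the same ingredients as the paper: the Chen family $(2,p,p+2)$, the AM--GM bound $\ln q\,\ln r\le(\ln c)^2/4$, and the resulting exponent $1-3\beta/4$.

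The one real difference is in your favour. The paper's proof uses only the $\omega=4$ subfamily $(2,p,qr)$ and asserts that it is infinite. Theorem \ref{T_Chen} as quoted only gives infinitely many $p$ with $p+2\in\mathbb{P}\cup P_2$, and does not say which alternative occurs infinitely often. Your uniform bound covers all three cases (prime, square of a prime, product of two distinct primes), with exponent $\min(1-2\beta/3,\,1-3\beta/4)=1-3\beta/4$ for $\beta>0$. It therefore gives divergence along all of $S$ without that extra input.

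Two minor points. First, discard $p=2$, since $(2,2,4)$ is not an $abc$-triple. Second, your closing remark does not hold as stated. Any size restriction of the form $q>p^{\theta}$ with fixed $\theta>0$ keeps $\ln q\asymp\ln c$; this includes the range $p^{1/10}<q\le p^{1/2}$ cited in the paper. The $\omega=4$ members then still have $q_C$ of exact order $(\ln c)^{1-3\beta/4}$, so Chen-type size control cannot by itself raise the threshold to $3/2$. That improvement is only available conditionally, e.g.\ on infinitely many twin primes.
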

where $s_{c_0}=\sup_{\text{$abc$ triples} (a,b,c_0)}\{q_C(a,b,c;\alpha,\beta)\}$.

\begin{proof}
We apply the second family used in the proof of Theorem \ref{T_Chen_DGM}, which is an infinite family of triples $(2, p, qr)$ with $\omega = 4$, such that three of the primes used grow as $\Theta(c)$, $\Theta(\sqrt{c}),\Theta(\sqrt{c})$. Applying the bound in this theorem for such triples and $\omega = 4$, we have
\[
q_C(a, b, c;\alpha,\beta) \;\ge\; \frac{(\ln (c))^{1-3\beta/4}}{4^{\alpha\beta}\,(\ln (2)/4)^{\beta/4}}.
\]
For $\beta<4/3$, we have $1-3\beta/4>0$, so the $\frac{(\ln (c))^{1-3\beta/4}}{4^{\alpha\beta}\,(\ln (2)/4)^{\beta/4}} \ra \infty$ as $c=p+2\to\infty$ along the infinite set $S$ from Chen's Theorem.
\end{proof}

We remark that, under the twin prime conjecture, the threshold of Theorem \ref{T_uncond_LB} tightens to $\beta < 3/2$. We now invoke a conditional upper bound on $q_C$ that derives from the $abc$-conjecture itself.

\begin{theorem}[Upper bound, assuming $abc$]\label{T_cond_UB}
Assume Conjecture \ref{C_abc_conjecture}, and fix $\omega_0\ge 2$. Then, for all $\varepsilon>0$, $\alpha\ge 0$, $\beta\ge\omega_0$, and $abc$-triples with $\omega(abc)=\omega_0$,
\[
\limsup_{c\to\infty,\,\omega=\omega_0} q_C(a,b,c;\alpha,\beta)
 \le (1+\varepsilon)^{\beta/\omega_0}\,\omega_0^{\beta/\omega_0-\alpha\beta} K(\beta, \omega_0),
\]
where $K(\beta, \omega_0) := (\ln (2))^{-\beta(1-1/\omega_0)}$. Thus, $\limsup_{c \ra \infty} q_C(a, b, c; \alpha, \beta)  < \infty$ for $\beta = \omega_0$ and vanishes for all $\beta>\omega_0$.
\end{theorem}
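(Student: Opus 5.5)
The plan is to bound the denominator of equation (\ref{eq_primefact}) from below, using Conjecture \ref{C_abc_conjecture} to control the \emph{sum} $S:=\sum_{i=1}^{\omega_0}\ln p_i=\ln(\mathrm{rad}(abc))$. The only difficulty is turning that sum bound into a bound on the \emph{product} $\prod_i \ln p_i$. Fix $\varepsilon>0$. By Conjecture \ref{C_abc_conjecture}, only finitely many $abc$-triples have $q_s>1+\varepsilon$. So for all but finitely many triples with $\omega(abc)=\omega_0$ (in particular for all sufficiently large $c$),
\[
\ln(c)\le (1+\varepsilon)\,S, \qquad\text{i.e.}\qquad S\ge \frac{\ln(c)}{1+\varepsilon}.
\]
Since a $\limsup$ ignores finitely many exceptions, it suffices to bound $q_C$ on the triples satisfying this inequality.

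The key step, and the one I expect to be the main obstacle, is the passage from sum to product. The AM--GM step used in Theorem \ref{T_6.3} runs in the wrong direction here, since it bounds the product from above. I would instead use a pigeonhole argument combined with the trivial bound $p_i\ge 2$.
\begin{itemize}
\item The largest of the $\omega_0$ summands satisfies $\max_i \ln p_i\ge S/\omega_0$.
\item Each of the remaining $\omega_0-1$ factors is at least $\ln 2$.
\end{itemize}
Hence
\[
\prod_{i=1}^{\omega_0}\ln p_i \;\ge\; (\ln 2)^{\omega_0-1}\,\frac{S}{\omega_0}\;\ge\;(\ln 2)^{\omega_0-1}\,\frac{\ln(c)}{\omega_0(1+\varepsilon)}.
\]
A sharper constant is available from the extremal configuration $\ln p_i=\ln 2$ for $i<\omega_0$, but the pigeonhole version reproduces exactly the factor $\omega_0^{\beta/\omega_0}$ appearing in the statement, so I would use it.

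Next I would take $\omega_0$-th roots, multiply by $\omega_0^{\alpha}$, raise to the power $\beta$, and substitute into (\ref{eq_primefact}). This gives
\[
q_C(a,b,c;\alpha,\beta)\;\le\;(1+\varepsilon)^{\beta/\omega_0}\,\omega_0^{\beta/\omega_0-\alpha\beta}\,(\ln 2)^{-\beta(1-1/\omega_0)}\,(\ln(c))^{1-\beta/\omega_0}.
\]
The middle factors are exactly $\omega_0^{\beta/\omega_0-\alpha\beta}K(\beta,\omega_0)$. Because $\beta\ge\omega_0$, the exponent $1-\beta/\omega_0$ is non-positive, and $\ln(c)\ge 1$ for $c\ge 3$. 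So the factor $(\ln(c))^{1-\beta/\omega_0}$ is at most $1$, and taking the $\limsup$ yields the displayed bound.

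Finally, the two consequences follow from the sign of $1-\beta/\omega_0$. For $\beta=\omega_0$ the power of $\ln(c)$ is exactly $0$, so the bound is a finite constant and the $\limsup$ is finite. For $\beta>\omega_0$ the factor $(\ln(c))^{1-\beta/\omega_0}$ tends to $0$ while every other factor is independent of $c$. Since $q_C>0$, the $\limsup$ along $\omega=\omega_0$ is $0$. This conclusion holds for each fixed $\varepsilon$, and $\varepsilon$ can then be taken arbitrarily small.
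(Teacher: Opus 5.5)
Your proposal is correct and follows essentially the same route as the paper: the paper also bounds the largest prime via $\mathrm{rad}(abc)\le p_m^{\omega_0}$ (your pigeonhole step), uses $\ln p_i\ge\ln 2$ for the remaining primes, and substitutes into (\ref{eq_primefact}) to obtain the same bound carrying the factor $(\ln c)^{1-\beta/\omega_0}$. Your observation that this factor is at most $1$ for $c\ge 3$ when $\beta\ge\omega_0$ makes explicit the passage to the displayed constant, which the paper leaves implicit.
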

%No corresponding uniform-in-$\omega$ statement is available: along Mersenne triples (also conditional on LPW) $q_{\mathrm{DGM}}\to\infty$, demonstrating that abc alone does not bound $q_C$ uniformly in $\omega$.

\begin{proof} Let $(a, b, c)$ satisfy $\omega(abc) = \omega_0$. Let $p_1,\dots,p_{\omega_0}$ be the prime divisors of $abc$, with $p_{m} := \max \{p_i\}_{i = 1}^{\omega_0}$. Since $\mathrm{rad}(abc)=\prod_{i =1}^{\omega_0} p_i\le p_{m}^{\omega_0}$, we have $\ln(p_m) \ge \frac{\ln(\mathrm{rad}(abc))}{\omega_0}$.
Let $\eps > 0$. The $abc$-conjecture gives $\ln c\le(1+\varepsilon)\ln(\mathrm{rad}(abc))$ for $c$ greater than some $c_0$; hence, $\ln p_{m}\ge\ln c/((1+\varepsilon)\omega_0)$ for such $ c> c_0$. Combining this result with $\ln (p_i) \ge\ln (2)$ for the remaining primes, we have
\[
\textstyle\prod_{i = 1}^{\omega_0} \ln (p_i) \;\ge\;(\ln (2))^{\omega_0-1}\cdot\frac{\ln (c)}{(1+\varepsilon)\omega_0},
\]
such that if we let $G = \left(\textstyle\prod_{i = 1}^{\omega_0} \ln (p_i)  \right)^{1/\omega}$, it follows that
\[
G\;\ge\;(\ln 2)^{1-1/\omega_0}\cdot\bigl((1+\varepsilon)\omega_0\bigr)^{-1/\omega_0}(\ln c)^{1/\omega_0}.
\]
Substituting into $q_C(a, b, c; \alpha, \beta)=\ln (c)/(\omega_0^\alpha \cdot G)^\beta$,
\[
q_C(a, b, c; \alpha, \beta) \le \frac{(1+\varepsilon)^{\beta/\omega_0}\,\omega_0^{\beta/\omega_0-\alpha\beta}}{(\ln (2))^{\beta(1-1/\omega_0)}}\,(\ln (c))^{\,1-\beta/\omega_0}
\]
Hence, for $\beta = \omega_0$, the exponent $1 - \beta /\omega_0 \leq 0$, such that the right-hand side is bounded; moreover, the right-hand bound vanishes when $\beta > \omega_0$.
\end{proof}

Theorems \ref{T_omega2_trichotomy}, \ref{T_uncond_LB}, and \ref{T_cond_UB} give the following phase diagram with respect to the value of $\beta$:

\begin{center}\small
\begin{tabular}{|l|l|}
\hline
Region & $\limsup_{c \ra \infty} s_c$ \\
\hline
$\beta<4/3$ & $\infty$ unconditionally (Thm \ref{T_uncond_LB}) \\
$4/3\le\beta<2$ & $\infty$ if $\exists$ inf. Mersenne primes \\
$\beta= \omega_0$, for $\omega(abc)=\omega_0$ & finite, given \textit{abc} (Thm \ref{T_cond_UB})\\
$\beta>\omega_0$, for $\omega(abc)=\omega_0$ & $0$, given \textit{abc} (Thm \ref{T_cond_UB})\\
\hline
\end{tabular}
\end{center}

We note that, even assuming the $abc$-conjecture, there does not exist a bound on $\limsup_{c \ra \infty} s_c$ that is uniform, since the critical $\beta_0=\omega/k$ for a family of triples $(a, b, c)$ with $k$ primes growing as $\Theta(c)$ primes grows linearly with $\omega$ and thereby unbounded as $\omega \ra \infty$. That is, a uniform bound on $\limsup_{c \ra \infty} q_C$ would be a statement strictly stronger than the $abc$-conjecture itself. 

An additional bound may be established via \textit{smoothness} considerations on the relevant $abc$-triples.

\begin{theorem}[Smooth-Prime Bound]\label{T_cond_UB_Baker}
    For fixed $\omega_0 \ge 3$, $A > 0$, let $\mathcal{T}_{\omega_0, A}$ comprise of $abc$-triples with $\omega(abc)=\omega_0$ and $P(abc) \le c^A$, where $P(n)$ is the largest prime dividing $n$. Then for all 
$\alpha \ge 0,\beta \ge \omega_0$,
\[
\limsup_{\substack{(a,b,c)\in\mathcal{T}_{\omega_0,A}, c\to\infty}} 
q_C(a,b,c;\alpha,\beta) \;<\; \infty.
\]
\end{theorem}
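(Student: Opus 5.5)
The plan is to reduce the claim to a lower bound on the geometric-mean factor in (\ref{eq_primefact}), exactly as in the proof of Theorem \ref{T_cond_UB}. There the $abc$-conjecture was used only to produce $\ln p_m \gg \ln c$ for the largest prime $p_m$; here that input must be replaced by an unconditional estimate valid on $\mathcal{T}_{\omega_0,A}$. Write $p_1,\dots,p_{\omega_0}$ for the primes dividing $abc$ and $G=\bigl(\prod_i \ln p_i\bigr)^{1/\omega_0}$, so that $q_C=\ln c/(\omega_0^{\alpha}G)^{\beta}$. For $\beta\ge\omega_0$ it then suffices to show
\[
\textstyle\prod_{i=1}^{\omega_0}\ln p_i \;\ge\; \kappa(\omega_0,A)\,\ln c
\]
for all sufficiently large $c$ in the family. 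Given this, $q_C\le \omega_0^{-\alpha\beta}\kappa^{-\beta/\omega_0}(\ln c)^{1-\beta/\omega_0}$, which is bounded, and the argument finishes as in Theorem \ref{T_cond_UB}.

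To obtain the displayed inequality I would use the theory of linear forms in logarithms, which is the natural unconditional substitute for $abc$ and the same tool behind Theorem \ref{T_Stewart}. Write $c=\prod_{p\mid c}p^{e_p}$, so that $\ln c=\sum_{p\mid c}e_p\ln p\le \omega_0\max_p e_p\ln p$. It then suffices to bound each $e_p\ln p$ for $p\mid c$.

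For such a $p$, consider $\Lambda=-a/b$, which is a product of powers of the at most $\omega_0-1$ primes dividing $ab$. Then $e_p=v_p(1-\Lambda)=v_p(c)$. Yu's $p$-adic version of Baker's theorem gives
\[
e_p \le C(\omega_0)\,\frac{p}{\ln p}\,\Bigl(\prod_{q\mid ab}\ln q\Bigr)\ln E,
\]
where $E$ is the largest exponent appearing in $a$ and $b$. Since $E\le \ln c/\ln 2$, the factor $\ln E$ is $O(\ln\ln c)$.

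The hypothesis $P(abc)\le c^{A}$ enters in the next step. I would use it to control $\ln p\le A\ln c$ and the heights entering Yu's constant, so that the only dependence on the triple is through $\prod\ln q$, the factor $p$, and $\ln\ln c$. Combining these gives
\[
\ln c\ \ll_{\omega_0,A}\ P\,\Bigl(\prod_i\ln p_i\Bigr)\ln\ln c .
\]

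The main obstacle is the factor $p$, together with the residual $\ln\ln c$, in the $p$-adic estimate. As written, the bound only yields $\prod\ln p_i\gg \ln c/(P\ln\ln c)$. Even combined with Theorem \ref{T_Stewart} in its $\omega_0$-dependent form, this gives only $\prod\ln p_i\gg\ln\ln c$, which suffices for $\beta$ large but not at $\beta=\omega_0$.

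My first attempt to remove the factor $p$ would be to split according to whether the dominant contribution to $\ln c$ comes from a large prime or from a small one.
\begin{itemize}
\item If some $p\mid c$ with $e_p\ln p\ge\ln c/\omega_0$ has $e_p=1$, then directly $\ln p\ge \ln c/\omega_0$. This gives the desired inequality with $\kappa=(\ln 2)^{\omega_0-1}/\omega_0$, by the same computation as in Theorem \ref{T_cond_UB}.
\item Otherwise the dominant prime appears to a high power. In that case I would apply the archimedean Baker bound to the pair $(a,b)$ after interchanging roles, so that the dominant prime power lies in the larger of $a,b$. The archimedean estimate carries no factor $p$, and the smallness of $c-b=a$ forces $\ln a\ll(\prod\ln p_i)\ln\ln c$.
\end{itemize}
The leftover $\ln\ln c$ loss is the step I am least sure can be eliminated. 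If it cannot, the argument proves the statement for every $\beta>\omega_0$, since the $\ln\ln c$ factor is then absorbed by the negative power $(\ln c)^{1-\beta/\omega_0}$. The boundary case $\beta=\omega_0$ would in that case need a sharper input, presumably a Baker bound without the logarithmic factor.
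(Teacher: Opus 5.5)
\textbf{Verdict: the proposal has a genuine gap.} Your reduction is right, and it is the same one the paper uses. Everything rests on $\prod_i\ln p_i\ge\kappa(\omega_0,A)\ln c$, i.e.\ the paper's inequality (\ref{eq5}). Once that holds, $G\ge\ln 2$ and $\omega_0-\beta\le 0$ give $q_C\le C(\omega_0)\,\omega_0^{-\alpha\beta}(\ln 2)^{\omega_0-\beta}$.

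You never prove that inequality, so the proposal is not a proof. At $\beta=\omega_0$ you concede this, but your fallback for $\beta>\omega_0$ is not established either. In your second case, the archimedean estimate for $c-b=a$ (with $a\le b$) bounds $\ln(c/a)$; that is, it says $a$ cannot be much \emph{smaller} than $c$. It gives no upper bound on $\ln a$. You are left with $\ln c\le \ln a+O\bigl((\prod_i\ln p_i)\ln\ln c\bigr)$. The remaining term $\ln a=\sum_{p\mid a}v_p(c-b)\ln p$ is accessible only through the $p$-adic estimate, which brings the factor $p$ back.

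This is a real obstruction, not bookkeeping. Already for $\omega_0=4$, consider $x^i+2^jy^l=z^k$ with primes $x,y,z\approx\exp((\ln c)^{1/4})$ and $x^i<2^jy^l$. For such triples $q_C\asymp(\ln c)^{1-3\beta/16}$, which is unbounded for $\beta<16/3$, so the theorem requires excluding infinitely many of them. Yet the archimedean bound only controls $\ln(c/x^i)$, the $2$-adic bound only controls $j$, and every other $p$-adic bound carries a factor of size roughly $\exp((\ln c)^{1/4})$. Excluding them is a generalized Fermat problem that linear forms in logarithms do not reach.

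For $\omega_0=3$ your plan can be repaired when $\beta>\omega_0$. Either $a=1$, and the archimedean bound alone gives $\ln c\ll(\prod_i\ln p_i)\ln\ln c$. Or $a,b,c$ are prime powers, one of them a power of $2$, and the archimedean and $2$-adic bounds together give $\ln c\ll(\prod_i\ln p_i)(\ln\ln c)^{O(1)}$.

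Two smaller errors:
\begin{enumerate}
\item $\prod_i\ln p_i\gg\ln\ln c$ suffices for no fixed $\beta$, since it only yields $q_C\ll\ln c/(\ln\ln c)^{\beta/\omega_0}$.
\item The hypothesis $P(abc)\le c^A$ cannot help. It bounds the $\ln p_i$ from above, while you need a lower bound. It is vacuous for $A\ge 1$. For $A<1$ it only discards triples having a prime $>c^A$, which already satisfy $\prod_i\ln p_i>(\ln 2)^{\omega_0-1}A\ln c$.
\end{enumerate}

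\textbf{Comparison with the paper.} The paper supplies the missing piece as a single black box: it quotes a theorem of Gy\H{o}ry on $S$-unit equations as giving (\ref{eq5}), $\ln c\le C(\omega_0)\prod_i\ln p_i$, and then finishes in one line. Be aware, however, that to my knowledge the effective $S$-unit bounds of Gy\H{o}ry and of Gy\H{o}ry--Yu carry, besides $\prod_i\ln p_i$, a factor of the size of one of the largest primes of $S$. That is precisely the loss you ran into. So the factor-free form (\ref{eq5}) does not seem to be what those results provide, and the example above shows the difference is substantive.

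The paper's use of $P(abc)\le c^A$ also runs backwards. The inequality $(\ln c)^{1-\omega_0}\le C(\omega_0)A^{\omega_0}$ is a lower bound on $\ln c$. And $G\le K(\omega_0,A)$ with $\omega_0-\beta\le 0$ bounds $G^{\omega_0-\beta}$ from below. Granted (\ref{eq5}), only $G\ge\ln 2$ is needed.

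So your doubt about the boundary case is well founded. Neither your argument nor the paper's citation yields the key inequality unconditionally. The only complete argument the paper offers at $\beta=\omega_0$ is the $abc$-conditional one of Theorem~\ref{T_cond_UB}.
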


\begin{proof}
Let $(a,b,c)\in\mathcal{T}_{\omega_0,A}$; the prime factors dividing $abc$ lie in the set $S := \{p_1,\ldots,p_{\omega_0}\}$. 
Since $a+b=c$ with $\gcd(a,b)=1$, the pair $(u, v) = (a/c,b/c)$ satisfies an $S$-unit equation $u+v=1$, since $a/c, b/c$ are $S$-units. By a result due to Györy, there exists a constant $C(\omega_0)$, depending only on $\omega_0$, such that
\begin{equation}\label{eq5}
    \ln c \leq C(\omega_0)\prod_{i=1}^{\omega_0} \ln(p_i) =C(\omega_0)\,G^{\omega_0},
\end{equation}
where $G := \bigl(\prod_{i=1}^{\omega_0}\ln p_i\bigr)^{1/\omega_0}$ \cite{gyory1979explicit}. Applying $p_i \le P(abc) \le c^A$ for each $i$ gives $G \le A\ln c$. By (\ref{eq5}), we have $ \ln c \;\le\; C(\omega_0)\,(A\ln c)^{\omega_0} \;=\; C(\omega_0)\,A^{\omega_0}(\ln c)^{\omega_0}$, and hence $(\ln c)^{1-\omega_0} \le C(\omega_0) A^{\omega_0}$, or, equivalently,
\begin{equation}\label{eq_G_bound}
    G \;\le\; A\ln c \;\le\; A\cdot\bigl(C(\omega_0)A^{\omega_0}\bigr)^{\frac{1}{\omega_0-1}} 
    \;=:\; K(\omega_0, A),
\end{equation}
a constant depending only on $\omega_0$ and $A$. Substituting equation (\ref{eq5}) into (\ref{eq_primefact}) gives that
\begin{align*}
    q_C = \frac{\ln c}{\omega_0^{\alpha\beta}\,G^\beta} \leq 
    \frac{C(\omega_0)}{\omega_0^{\alpha\beta}}\,G^{\omega_0-\beta};
\end{align*}
since $G \le K(\omega_0, A)$ is a constant depending only on $\omega_0$ and $A$, which are both fixed with respect to triples, and 
$\omega_0 - \beta \le 0$ for $\beta \ge \omega_0$, the right side is bounded above by 
$C(\omega_0)\,\omega_0^{-\alpha\beta}\,K(\omega_0,A)^{\omega_0-\beta}$, 
which is finite. Hence, $\limsup_{c \ra \infty} q_C$ on this family is bounded above for all $\beta \ge \omega_0$, as desired.
\end{proof}

\subsection{Mapping Principles for Phase Transitions}

We now progress from phase transition results for constant $\omega$ to asymptotics of $q_C$ without restrictions on fixed $\omega$. As intuition, we define a map $\rho: (\Z^+)^3 \ra \mathcal{E}$ from triples to the set of semistable elliptic curves $\mathcal{E}$ by mapping each $abc$-triple to a corresponding semistable Frey elliptic curve, given in Weierstrass form \cite{silverman2009arithmetic, frey1986links}:
\[
     \rho(a, b, c) \mapsto E_{a, b, c}: y^2 = x(x - a)(x + b)
\]
Additionally, we use invariants of $E_{a,b,c}$ to evaluate  $q_C$, including the minimal discriminant $\Delta_E = 2^{-8} (abc)^2$ and the conductor $N_E$, which equals $\mathrm{rad}(abc) = \prod_{i=1}^{\omega} p_i$. 

Moreover, it must be noted that the Szpiro ratio $L = \ln|\Delta_E|/\ln N_E$ satisfies 
$L = \frac{2\ln(abc)+O(1)}{\ln N} \approx \frac{2\ln c}{\ln N} \;=\; 2 q_s(a, b, c)$ 
\cite{szpiro1981proprietes}. Szpiro's conjecture asserts $L \le 6+\varepsilon$ for all 
semistable elliptic curves over $\mathbb{Q}$, which would imply $q_s \lesssim 3$; more 
generally, bounding $L \le 2\sigma$ is equivalent to $\ln(c) \lesssim \sigma\ln(N)$ \cite{granville2002, szpiro1981proprietes}. 
This motivates the parametrized result below; we include a correction $-\lambda\omega\ln\omega$ 
that arises from the gap between $\ln(abc)$ and $\ln c$. That is, the bound on the Szpiro ratio $L \le 2\sigma$ gives $2\ln c \;\le\; 2\sigma\ln N - 2\ln(ab) + O(1)$. To estimate $\ln(ab)$, we use a heuristic given by the prime number theorem: if the $\omega$ primes dividing $abc$ are typical primes near their 
median size, $\ln N = \sum_{i=1}^{\omega}\ln p_i \sim  \omega\ln\!\left(\frac{\ln c}{\omega}\right) 
    \;=\; \omega\ln(\ln (c)) - \omega\ln(\omega)$; this gives $\ln(ab) \le \omega\ln P(abc) \sim \omega\ln\ln c$ and suggests the  correction is of order $\omega\ln\omega$. We thus impose the heuristic ansatz \[
    \ln (c) \;\le\; \sigma\ln (N) - \lambda\omega \ln(\omega),
\]
for positive $\lambda$, noting that this is implied by Szpiro's conjecture when $\sigma = 3$ and $\lambda$ is 
an absolute constant.\footnote{Note: the correction term is $o(\ln c_n)$ whenever 
$\omega_n \in o(\ln c_n / \ln(\ln (c_n)))$, which holds under the assumption $\omega_n \sim \delta(\ln c_n)^\kappa$ with $\kappa < 1$, as noted in the theorem itself.}

\begin{theorem}[Szpiro-Parametrized Phase Transition]
\label{T-szpiro}
Let $\mathcal{F} = \{(a_n, b_n, c_n)\}_{n=1}^{\infty}$ be a sequence of $abc$-triples with $c_n \to \infty$, and let $\omega_n = \omega(a_n b_n c_n)$. Assume that $\ln c_n \le \sigma \ln N_n - \lambda \omega_n \ln \omega_n$, for constants $\sigma \ge 1$ and $\lambda \ge 0$, with the Szpiro bound $L \approx 2\sigma$. If $\omega_n \sim \delta \cdot (\ln c_n)^\gamma$ for constants $\delta > 0$ and $0 < \gamma < 1$, then we have
\begin{align*}
    \limsup_{(a_n, b_n, c_n) \in \mathcal{F}, c_n \ra \infty} q_C = \begin{cases}
        \frac{1}{\delta^{1/\gamma} (\ln(2))^{\beta}} & \alpha \beta = 1/\gamma \\
        0 & \alpha \beta > 1/\gamma.
    \end{cases}
\end{align*}
\iffalse
\begin{enumerate}
    \item \textbf{Sub-Critical Regime ($\alpha\beta < \frac{1}{\gamma}$):} The dampening penalty is insufficient, yielding $\limsup_{n \to \infty} q_C = \infty$.
    \item \textbf{Critical Boundary ($\alpha\beta = \frac{1}{\gamma}$):} The logarithmic growth is perfectly neutralized, yielding a finite, non-zero geometric boundary constant:
    \[
        \limsup_{n \to \infty} q_C = \frac{1}{\delta^{1/\gamma} (\ln 2)^\beta}
    \]
    \item \textbf{Super-Critical Regime ($\alpha\beta > \frac{1}{\gamma}$):} The structural dampening dominates unconditionally, yielding $\lim_{n \to \infty} q_C = \frac{\ln c_n}{(\ln c_n)^{\gamma\alpha\beta}} = 0$.
\end{enumerate}
\fi
\end{theorem}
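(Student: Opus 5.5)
The plan is to work directly from the factored form (\ref{eq_primefact}). Write $G_n := \bigl(\prod_{i=1}^{\omega_n}\ln p_i\bigr)^{1/\omega_n}$ for the geometric mean of the logarithms of the primes dividing $a_nb_nc_n$. Then
\[
q_C(a_n,b_n,c_n;\alpha,\beta) = \frac{\ln c_n}{\omega_n^{\alpha\beta}\,G_n^{\beta}}.
\]
The two factors in the denominator will be controlled separately. The $\omega$-penalty $\omega_n^{\alpha\beta}$ is handled by the growth hypothesis $\omega_n\sim\delta(\ln c_n)^\gamma$. The geometric-mean factor $G_n^\beta$ is handled by the trivial estimate $\ln p_i\ge \ln 2$ for every prime, which gives $G_n\ge \ln 2$ uniformly in $n$.

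\textbf{Step 1 (upper bound).} Substituting the two estimates gives
\[
q_C \le \frac{\ln c_n}{\delta^{\alpha\beta}(\ln c_n)^{\gamma\alpha\beta}(1+o(1))\,(\ln 2)^\beta}
= \frac{(\ln c_n)^{1-\gamma\alpha\beta}}{\delta^{\alpha\beta}(\ln 2)^{\beta}}\,(1+o(1)).
\]
The behaviour now depends on the sign of $1-\gamma\alpha\beta$, exactly as in Theorem \ref{T_phase_transition}.
\begin{itemize}
\item If $\alpha\beta>1/\gamma$, the exponent is negative, so $q_C\to 0$. Since $q_C>0$, this gives $\limsup q_C = 0$.
\item If $\alpha\beta=1/\gamma$, the power of $\ln c_n$ cancels and $\delta^{\alpha\beta}=\delta^{1/\gamma}$. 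The bound then becomes $\limsup q_C\le 1/(\delta^{1/\gamma}(\ln 2)^\beta)$.
\end{itemize}

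\textbf{Step 2 (role of the Szpiro ansatz).} The ansatz $\ln c_n\le\sigma\ln N_n-\lambda\omega_n\ln\omega_n$ is used to check consistency, not to produce the bound. Together with $\omega_n = o(\ln c_n/\ln\ln c_n)$ (the footnote's condition, which holds for $\gamma<1$), it shows that the correction term is $o(\ln c_n)$. It also shows that $\ln N_n\gtrsim \ln c_n/\sigma$. So the family is nondegenerate: $N_n\to\infty$ while $\omega_n$ grows only like a power of $\ln c_n$. In particular, nothing forces $G_n$ to blow up in a way that would change the exponent $1-\gamma\alpha\beta$ computed above. The $1+o(1)$ from $\omega_n\sim\delta(\ln c_n)^\gamma$ is absorbed into the limit.

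\textbf{Main obstacle (matching lower bound at criticality).} At $\alpha\beta=1/\gamma$, the stated value is an equality for the $\limsup$. Proving that direction requires $G_n/\ln 2\to 1$ along a subsequence. This means that, in geometric mean, almost all of the $\omega_n$ primes must be $\ln$-comparable to $2$. That is not implied by the hypotheses: the ansatz bounds $\ln N_n=\omega_n\ln\mathrm{AM}$-type quantities from below, whereas it is the geometric mean that must be small.

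I would first try the following. Use $\ln N_n\le \ln(a_nb_nc_n)<3\ln c_n$ together with AM--GM to show that $G_n\le 3\ln c_n/\omega_n$. This at least gives a positive lower bound on $\limsup q_C$ of the same order, so the critical regime is genuinely finite and nonzero. I would then identify the constant $1/(\delta^{1/\gamma}(\ln2)^\beta)$ as the sharp value under the extremal configuration where the small primes dominate the geometric mean. If that cannot be forced from the hypotheses, I would present the critical case as the upper bound $\le 1/(\delta^{1/\gamma}(\ln 2)^\beta)$, attained when $G_n\to\ln 2$.
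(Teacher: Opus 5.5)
Your Step~1 is the paper's argument in a more economical form. The paper lower-bounds $G_n$ through Lemma~\ref{sqfree-lemma} and then inserts $\ln N_n \ge (\ln c_n - \lambda\omega_n\ln\omega_n)/\sigma$. It then lets the resulting extra factors $\sigma^{\beta/\omega_n}$, $\omega_n^{\beta/\omega_n}$ and $(\ln c_n-\lambda\omega_n\ln\omega_n)^{-\beta/\omega_n}$ tend to $1$, so it ends with exactly your estimate $G_n \gtrsim \ln 2$. You are therefore right that the Szpiro ansatz plays no essential role.

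This argument correctly gives $\limsup q_C = 0$ for $\alpha\beta>1/\gamma$, and $\limsup q_C \le 1/(\delta^{1/\gamma}(\ln 2)^\beta)$ at $\alpha\beta = 1/\gamma$. The paper then simply declares the bound ``sharp as $n\to\infty$ (due to the $o(1)$ term)'', which is not an argument. So you are also right that the reverse inequality at criticality is the real issue.

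Your proposal goes wrong in how it tries to close that gap. First, the AM--GM route gives $G_n < 3\ln c_n/\omega_n$, hence
\[
q_C > 3^{-\beta}\,\omega_n^{\beta-\alpha\beta}(\ln c_n)^{1-\beta} \sim 3^{-\beta}\delta^{\beta-1/\gamma}(\ln c_n)^{-(1-\gamma)\beta}\qquad(\alpha\beta = 1/\gamma).
\]
This tends to $0$ because $\gamma<1$, so it gives no positive lower bound.

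Second, and more seriously, the ``extremal configuration'' $G_n\to\ln 2$ cannot occur, because the $p_i$ are distinct and $\omega_n\to\infty$. For any fixed $M$, at most $\pi(M)$ of them are $\le M$, so
\[
\ln G_n \ge \frac{\pi(M)}{\omega_n}\ln\ln 2 + \Bigl(1-\frac{\pi(M)}{\omega_n}\Bigr)\ln\ln M \longrightarrow \ln\ln M .
\]
Hence $G_n\to\infty$; indeed $G_n \gtrsim \ln\omega_n \sim \gamma\ln\ln c_n$, which matches the paper's own heuristic remark after the proof. Since $\omega_n^{1/\gamma} = \delta^{1/\gamma}\ln c_n\,(1+o(1))$, at $\alpha\beta = 1/\gamma$ this yields $q_C = (1+o(1))/(\delta^{1/\gamma}G_n^\beta)\to 0$.

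So the critical-case equality cannot be proved from these hypotheses, because it is false. The correct conclusion is $\limsup q_C = 0$ for all $\alpha\beta\ge 1/\gamma$, with $1/(\delta^{1/\gamma}(\ln 2)^\beta)$ only a non-attained upper bound. Your Step~2 claim that nothing forces $G_n$ to blow up should be reversed accordingly. $G_n$ does blow up: too slowly to alter the exponent $1-\gamma\alpha\beta$, but fast enough to kill the critical constant.
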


Before evaluating this asymptotic behavior, we establish a lower bound on the denominator of $q_C$, as follows.

\begin{lemma}
\label{sqfree-lemma}
Let $N = \prod_{i=1}^\omega p_i$ squarefree, with $\omega \ge 2$ distinct prime factors. Then, $G := \left(\prod_{i=1}^\omega \ln p_i\right)^{1/\omega}$ satisfies
\[
    G \ge (\ln (2))^{1-1/\omega} \left( \frac{\ln (N)}{\omega} \right)^{1/\omega}.
\]
\end{lemma}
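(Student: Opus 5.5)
The plan is to isolate the largest prime and bound every other logarithmic factor from below by $\ln 2$, the same device used in the proof of Theorem \ref{T_cond_UB}. Order the primes so that $p_m := \max_i p_i$. Since $N$ is squarefree, Lemma \ref{L_ln_primes_DGM} holds with equality, so $\ln N = \sum_{i=1}^{\omega} \ln p_i$. Each summand is at most $\ln p_m$, which gives $\ln p_m \ge \ln(N)/\omega$.

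Next I bound the product of logarithms. For every index $i \neq m$ we have $p_i \ge 2$, and hence $\ln p_i \ge \ln 2 > 0$. Because all factors are positive, we may multiply these inequalities to obtain
\[
\prod_{i=1}^{\omega} \ln p_i \;\ge\; (\ln 2)^{\omega-1}\cdot \ln p_m \;\ge\; (\ln 2)^{\omega-1}\cdot\frac{\ln N}{\omega}.
\]
Taking $\omega$-th roots, which is monotone on the positive reals, then yields $G \ge (\ln 2)^{1-1/\omega}\,(\ln(N)/\omega)^{1/\omega}$, as claimed.

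No step here is a real obstacle. The only point needing care is that every factor is positive, so that the inequalities multiply correctly and the root is order-preserving. This holds because every prime is at least $2$, and so $\ln p_i \ge \ln 2 > 0$. The hypothesis $\omega \ge 2$ merely ensures there is at least one "other" prime, making the bound nontrivial; the argument in fact also works for $\omega = 1$.

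One could also note that the bound is not sharp in general, since a smarter argument might use $\ln p_i \ge \ln$ of the $i$-th prime. I would not pursue that refinement, however, because the stated form is exactly what is needed later to feed into the proof of Theorem \ref{T-szpiro}.
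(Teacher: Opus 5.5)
Your proposal is correct and follows essentially the same route as the paper: isolate the largest prime, use $\ln p_{\max} \ge \ln(N)/\omega$ (the paper phrases this as $p_\omega \ge N^{1/\omega}$), bound the remaining $\omega-1$ logarithms below by $\ln 2$, multiply, and take $\omega$-th roots. Your side remark that the argument also holds for $\omega = 1$, where it gives equality, is accurate.
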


\begin{proof}
Consider such an $N$, and write $2 \leq p_1 < p_2 < \dotsc < p_{\omega}$ upon ordering. Since $p_i \ge 2$, the first $\omega - 1$ terms give $\prod_{i =1 }^{\omega} \ln(p_i) \ge (\ln(2))^{\omega - 1}$. Additionally, the maximal prime factor satisfies $p_{\omega} \ge N^{1/\omega}$, such that $\ln(p_{\omega}) \ge \ln(N)/\omega$; combining these results gives $\prod_{i = 1}^{\omega} \ln(p_i) = \ln(p_{\omega}) \prod_{i = 1}^{\omega - 1}\ln(p_i) \ge \ln(2)^{\omega - 1}(\ln(N)/\omega)$; the result follows.
\end{proof}

We now utilize this result to establish the proof of the finer $\lim\sup$ condition on $q_C$.

\begin{proof}[Proof of Theorem \ref{T-szpiro}]
Let $(a_n, b_n, c_n) \in \mathcal{F}$, and suppose the desired conditions regarding $c_n$ and $\omega_n$ are satisfied. Substituting the bound provided by Lemma \ref{sqfree-lemma} into equation (\ref{eq_primefact}) gives
\begin{align*}
    q_C(a_n, b_n, c_n; \alpha, \beta) &\leq \frac{\ln (c_n)}{\omega_n^{\alpha\beta} \left( (\ln 2)^{1 - 1/\omega_n} \left(\frac{\ln N_n}{\omega_n}\right)^{1/\omega_n} \right)^\beta}\\ & = \frac{\omega_n^{\beta/\omega_n - \alpha\beta}}{(\ln (2))^{\beta(1 - 1/\omega_n)}} \cdot \frac{\ln (c_n)}{(\ln (N_n))^{\beta/\omega_n}}.
\end{align*}
By the bound on $\ln(c_n)$, rearrangement gives $\ln N_n \ge \frac{\ln (c_n) - \lambda \omega_n \ln (\omega_n)}{\sigma}$, such that
\begin{align*}
    q_C \le \frac{\sigma^{\beta/\omega_n}}{(\ln (2))^{\beta(1 - 1/\omega_n)} \omega_n^{\alpha\beta - \beta/\omega_n}} \cdot \frac{\ln (c_n)}{(\ln c_n - \lambda \omega_n \ln (\omega_n))^{\beta/\omega_n}}.
\end{align*}
Let us now evaluate the asymptotic limit of this bound as $n \ra \infty$ under the assumption $\omega_n \sim \delta (\ln c_n)^\gamma$ as $n \to \infty$. 

We firstly handle the constant factors. Since $\gamma > 0$, we have $\omega_n \ra \infty$ as $n \ra \infty$, such that $\lim_{n \to \infty} \frac{\beta}{\omega_n} = 0$. Hence, the constant terms satisfy $\lim_{n \to \infty} \sigma^{\beta/\omega_n} = 1$ and $\lim_{n \to \infty} (\ln 2)^{\beta(1 - 1/\omega_n)} = (\ln 2)^\beta$.

The rightmost quotient requires more careful analysis. Rewriting the compensatory scaling as
\[
    (\ln c_n - \lambda \omega_n \ln \omega_n)^{\beta/\omega_n} = \exp\left( \frac{\beta}{\omega_n} \ln(\ln c_n - \lambda \omega_n \ln \omega_n) \right),
\]
we note that since $\gamma < 1$, the value of $\omega_n \ln \omega_n \sim \delta \gamma (\ln c_n)^\gamma \ln(\ln(c_n))$ is sub-linear relative to $\ln(c_n)$. Hence, $\ln(\ln c_n - \lambda \omega_n \ln \omega_n) \sim \ln \ln c_n$. Thus, we have
\[
    \lim_{n \to \infty} \frac{\beta \ln \ln c_n}{\delta (\ln c_n)^\gamma} = 0,
\]
such that the scaling of the entire quotient, as $n \ra \infty$, satisfies $\frac{\ln (c_n)}{(\ln c_n - \lambda \omega_n \ln (\omega_n))^{\beta/\omega_n}} \ra \ln(c_n) (1 + o(1))$. Combining these results, we evaluate
\begin{align*}
    q_C(a_n, b_n, c_n; \alpha, \beta) &\lesssim \frac{1}{(\ln 2)^\beta} \cdot \frac{1}{(\delta (\ln c_n)^\gamma)^{\alpha\beta}} \cdot \ln (c_n) \\ &= \frac{1}{\delta^{\alpha\beta} (\ln 2)^\beta} (\ln (c_n))^{1 - \gamma \alpha \beta},
\end{align*}
with the bound being sharp as $n \ra \infty$ (due to the $o(1)$ term). Hence, when $\alpha \beta = 1/\gamma$, the $\ln(c_n)$ term vanishes, such that the $\lim\sup$ equals the constant term $\frac{1}{\delta^{1/\gamma} (\ln(2))^{\beta}}$. The result follows quickly for $\alpha \beta > 1/\gamma$.
\end{proof}
We remark that the $\alpha \beta < 1/\gamma$ case is less straightforward, since our derivation solely provides an upper bound. However, if there exists a subsequence of $\mathcal{F}$ such that the first $\omega_n$ primes are as small as possible, then the $i$-th prime in the expansion satisfies $\ln(p_i) \sim \ln(i \ln(i)) \sim \ln(i)$, such that $G_n := \left(\prod_{i = 1}^{\omega_n} \ln(p_i) \right)^{1/\omega_n} \sim \left( \prod_{i = 1}^{\omega} \ln(i)\right)^{1/\omega_n} \sim \ln(\omega_n)$ as a heuristic\footnote{Note that $\frac{1}{\omega_n} \sum_{i = 1}^{\omega_n} \ln(\ln(i)) \sim \frac{1}{\omega_n} \int_2^{\omega_n} \ln(\ln(x)) \, dx$. Integration and approximation of the $\int_2^{\omega_n} 1/\ln(x) \, dx$ term as $\mathrm{li}(\omega_n)\sim \omega_n/\ln(\omega_n)$ by the prime number theorem gives $\frac{1}{\omega_n} \sum_{i = 1}^{\omega_n} \ln(\ln(i)) \sim \ln \ln \omega_n$, and exponentiation gives the desired relationship.}. Thus, since $\omega_n \sim \delta \cdot (\ln(c_n))^{\gamma}$ by assumption, $G_n \sim \ln(\ln(c_n))$, which gives
\begin{align*}
    q_C \gtrsim \frac{\ln(c_n)}{\delta^{\alpha \beta} (\ln(c_n))^{\gamma \alpha \beta} (\ln(\ln(c_n)))^{\beta}} = \frac{\ln(c_n)^{1 - \gamma \alpha \beta}}{\delta^{\alpha \beta} (\ln(\ln(c_n)))^{\beta}},
\end{align*}
which diverges. Hence, if such a suitable family exists, the result may be extended to this domain as per intuition.

%The classical $abc$-quality of an $abc$-triple $(a,b,c)$ is governed by the ratio $\mathcal{R} = \frac{\ln c}{\ln \operatorname{rad}(abc)}$. By mapping our generalized quality index $q_C$ directly into this classical ratio, we uncover a hidden, exact constraint on the distribution of the bad reduction primes: the ratio of their geometric mean to their arithmetic mean. This proves that for an infinite family of Frey curves to violate the $abc$-conjecture, the prime factors must be geometrically balanced; a single massive prime outlier unconditionally forces the efficiency to zero.

We now seek to invoke this trichotomy to establish a connection between the \textit{standard} quality (Def. \ref{D_standard_quality}) and the metrics of our DGM class (Def. \ref{D_DGM_class}). To do so, we first invoke a lemma that relates the geometric and arithmetic means of distinct primes, which studies the effect of an outlier in the geometric \textit{packing} of the primes.

\begin{lemma}[Packing Primes]
\label{l_pack_primes}
Let $N = \prod_{i=1}^\omega p_i$ for primes $p_i$, corresponding to the conductor of the Frey curve $E_{a,b,c}$, with $\omega \ge 2$. Let $A := \frac{1}{\omega} \sum_{i = 1}^{\omega} \ln(p_i), G := \left( \prod_{i=1}^\omega \ln p_i \right)^{1/\omega}$. Defining the \emph{packing efficiency} as $\eta = \frac{G}{A}$, we have $0 < \eta < 1$; additionally, if the conductor satisfies $N = P \cdot N_0$, where $P$ satisfies $P \ra \infty$ while $N_0$ remains invariant, then $\eta$ decays at the same rate: $\eta = \mathcal{O}\left( (\ln P)^{\frac{1}{\omega} - 1} \right)$, and vanishes as $P \ra \infty$.
\end{lemma}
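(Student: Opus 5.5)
We would treat the two assertions of Lemma \ref{l_pack_primes} separately. Throughout, the $p_i$ are the distinct primes dividing $abc$, since $N$ is the conductor $\mathrm{rad}(abc)$. For the bounds $0<\eta<1$, positivity is immediate: each $p_i\ge 2$, so $\ln p_i\ge \ln 2>0$, and hence both $A$ and $G$ are strictly positive. For the upper bound we apply AM--GM to the positive reals $\ln p_1,\dots,\ln p_\omega$, exactly as in the proofs of Theorems \ref{T_13} and \ref{T_6.3}, which gives $G\le A$. The inequality is strict unless all $\ln p_i$ coincide. Because the primes are distinct and $\omega\ge 2$, the values $\ln p_i$ are pairwise distinct, so equality is excluded and $\eta<1$.

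For the decay statement, we read the hypothesis $N=P\cdot N_0$ as follows: $P$ is a prime (necessarily coprime to $N_0$, since $N$ is squarefree), and $N_0=\prod_{i=1}^{\omega-1}p_i$ is a fixed squarefree integer with $\omega-1$ prime factors. Thus $\omega$ is constant along the family. Set $K:=\prod_{i=1}^{\omega-1}\ln p_i$, a positive constant. This mirrors the $(\omega,k)$-condition with $k=1$ used in Theorem \ref{T_phase_transition}. Then
\[
G=(K\ln P)^{1/\omega},\qquad A=\frac{\ln P+\ln N_0}{\omega},
\]
so that
\[
\eta=\frac{\omega\,K^{1/\omega}(\ln P)^{1/\omega}}{\ln P+\ln N_0}\le \omega K^{1/\omega}(\ln P)^{\frac{1}{\omega}-1}.
\]
This gives $\eta=\mathcal{O}\bigl((\ln P)^{1/\omega-1}\bigr)$ with implied constant $\omega K^{1/\omega}$, depending only on $N_0$. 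In fact $\eta\sim \omega K^{1/\omega}(\ln P)^{1/\omega-1}$, which justifies the phrase ``at the same rate''. Since $\omega\ge 2$, the exponent satisfies $1/\omega-1\le -1/2<0$, so $\eta\to 0$ as $P\to\infty$.

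The computation itself is routine. The one step that needs care is fixing the interpretation of the hypothesis: we must insist that $P$ is a single prime coprime to $N_0$, so that $\omega$ stays fixed and $K$ is genuinely constant. If instead $P$ were allowed to be composite with a growing number of factors, $\omega$ would vary and the stated rate would no longer follow from this argument, so we would state that restriction explicitly.
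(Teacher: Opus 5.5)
Your proof is correct and follows essentially the same route as the paper: AM--GM with strictness coming from the distinctness of the primes gives $0<\eta<1$, and the explicit factorization $\eta=\omega K^{1/\omega}(\ln P)^{1/\omega}/(\ln P+\ln N_0)$ gives the decay, where your $K^{1/\omega}$ is exactly the paper's $G_0^{1-1/\omega}$. Your explicit requirement that $P$ be a single prime coprime to $N_0$, so that $\omega$ stays fixed, is the same assumption the paper makes implicitly when it writes $G=(\ln P)^{1/\omega}G_0^{1-1/\omega}$; your version has the small advantage of stating it openly.
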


\begin{proof}
We note that the notation $A, G$ derives from the arithmetic and geometric mean of the logarithmic primes. As such, the bound $0 < \eta < 1$ follows from the AM-GM inequality (with equality not being satisfied since the primes $p_i$ are distinct).

Let us now write $N = P \cdot N_0$ by assumption. Let $G_0$ be the geometric mean of the logarithms of the $\omega - 1$ primes dividing $N_0$. Then, we may write $A = \frac{\ln(N)}{\omega} = \frac{\ln(P) + \ln(N_0)}{\omega}$, extracting the role of $P$, and, similarly, $G = (\ln P)^{1/\omega} \cdot G_0^{1 - 1/\omega}$. Substituting these into the definition of $\eta$ yields:
\[
    \eta = \frac{\omega \cdot G_0^{1 - 1/\omega} \cdot (\ln P)^{1/\omega}}{\ln (P) + \ln (N_0)} =  \left( \omega \cdot G_0^{1 - 1/\omega} \right) \cdot \frac{(\ln (P))^{1/\omega - 1}}{1 + \frac{\ln (N_0)}{\ln P}}
\]
Because $N_0$ and $\omega$ are bounded, the prefactor is constant, and the denominator limits to $1$ as $P \ra \infty$. Hence, $\eta = \mathcal{O}\left( (\ln( P))^{\frac{1}{\omega} - 1} \right)$. Since $\omega \ge 2$, $\eta \ra 0$ as $P \ra \infty$.
\end{proof}

Motivated by this \textit{packing efficiency} as a comparative between the two metrics, we establish the following.

\begin{theorem}[Factorization of $q_s$]
\label{t-big}
Let $\mathcal{F} = \{(a_n, b_n, c_n)\}_{n=1}^\infty$ be a sequence of $abc$-triples, and let $N_n := \operatorname{rad}(a_n b_n c_n)$. For any $\alpha > 0$, the standard quality $q_{s,n} := \frac{\ln (c_n)}{\ln (N_n)}$ satisfies
\[
    q_{s,n} = q_C(a_n, b_n, c_n; \alpha, 1) \cdot \omega_n^{\alpha - 1} \cdot \eta_n
\]
Consequently, if the sequence occupies the boundary domain, such that $\omega_n \sim \delta (\ln c_n)^\gamma$ with $\alpha = 1/\gamma$, we have
\[
    \limsup_{n \to \infty} q_{s,n} \le \left( \frac{1}{\delta^{1/\gamma} \ln(2)} \right) \cdot \limsup_{n \to \infty} \left( \omega_n^{\frac{1-\gamma}{\gamma}} \cdot \eta_n \right)
\]
\end{theorem}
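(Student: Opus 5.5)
The plan is to first verify the displayed factorization as an exact algebraic identity, valid for every $abc$-triple, and then deduce the $\limsup$ bound from a bound on $q_C(\cdot;1/\gamma,1)$ in the regime of Theorem \ref{T-szpiro}.

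\textbf{Step 1: the identity.} For a triple with prime divisors $p_1,\dots,p_{\omega}$, put $N=\mathrm{rad}(abc)$, $A=\frac1\omega\sum\ln p_i=\ln(N)/\omega$ and $G=(\prod\ln p_i)^{1/\omega}$, as in Lemma \ref{l_pack_primes}. Taking $\beta=1$ in (\ref{eq_primefact}) gives $q_C(a,b,c;\alpha,1)=\ln(c)/(\omega^\alpha G)$. Multiplying by $\omega^{\alpha-1}$ and by $\eta=G/A=\omega G/\ln(N)$, the factors $G$ and $\omega^\alpha$ cancel. What remains is exactly $\ln(c)/\ln(N)=q_s$. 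No asymptotics are needed, so the identity holds for each $n$, and for any $\alpha>0$.

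\textbf{Step 2: bounding $q_C$ in the boundary regime.} Set $\alpha=1/\gamma$ and $\beta=1$, so $\alpha\beta=1/\gamma$. This is the critical case of Theorem \ref{T-szpiro}, which would give $\limsup q_C\le 1/(\delta^{1/\gamma}\ln 2)$. However, that theorem assumes the Szpiro-type ansatz $\ln c_n\le\sigma\ln N_n-\lambda\omega_n\ln\omega_n$, and the present statement does not. I will therefore avoid the ansatz and argue directly from Lemma \ref{sqfree-lemma}, which gives
\[
q_C\le \frac{\omega_n^{1/\omega_n-\alpha}}{(\ln 2)^{1-1/\omega_n}}\cdot\frac{\ln c_n}{(\ln N_n)^{1/\omega_n}}.
\]
Next I use the trivial bound $\ln N_n\ge\omega_n\ln 2$, so $(\ln N_n)^{-1/\omega_n}\le(\omega_n\ln 2)^{-1/\omega_n}$. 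Since $\omega_n\to\infty$ (because $\gamma>0$), the factors $\omega_n^{1/\omega_n}$, $(\ln 2)^{1/\omega_n}$ and $(\omega_n\ln2)^{-1/\omega_n}$ all tend to $1$. Hence
\[
q_C\le(1+o(1))\,\frac{\ln c_n}{\omega_n^{1/\gamma}\ln 2}=(1+o(1))\,\frac{\ln c_n}{\delta^{1/\gamma}\ln(c_n)\,\ln 2},
\]
using $\omega_n^{1/\gamma}\sim\delta^{1/\gamma}\ln c_n$. This yields $\limsup_n q_C(a_n,b_n,c_n;1/\gamma,1)\le 1/(\delta^{1/\gamma}\ln 2)$ unconditionally. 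If one prefers, one may simply cite Theorem \ref{T-szpiro} at this step under its hypotheses.

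\textbf{Step 3: combining.} With $\alpha=1/\gamma$, the factor $\omega_n^{\alpha-1}$ equals $\omega_n^{(1-\gamma)/\gamma}$. Apply the identity of Step 1 termwise. All three factors are nonnegative, so the inequality $\limsup(x_ny_n)\le\limsup x_n\cdot\limsup y_n$ holds whenever the right-hand side is not of the indeterminate form $0\cdot\infty$. This gives the claimed bound.

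The step I expect to need the most care is this final product-of-$\limsup$ step, in particular the convention when $\limsup(\omega_n^{(1-\gamma)/\gamma}\eta_n)=\infty$. In that case the bound should be read as trivially true. The other edge case is when the first factor's $\limsup$ is $0$, which cannot occur here because we only use an upper bound on it. I would state this convention explicitly. The remaining work is routine bookkeeping of the $o(1)$ factors.
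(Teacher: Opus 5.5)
Your proof is correct, but Step 2 takes a genuinely different and more elementary route than the paper. Step 1 is the same algebraic identity the paper derives: write $\ln c_n = q_C\,\omega_n^{\alpha}G_n$ and $\ln N_n=\omega_n A_n$, then divide.

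\textbf{Where the routes differ.} The paper obtains the bound on $q_C$ by citing Theorem \ref{T-szpiro}, asserting that $\limsup_n q_C(a_n,b_n,c_n;1/\gamma,1)$ equals $1/(\delta^{1/\gamma}\ln 2)$. This silently imports the Szpiro-type ansatz $\ln c_n\le\sigma\ln N_n-\lambda\omega_n\ln\omega_n$, which is not among the hypotheses of the statement. You noticed this and derived the needed upper bound unconditionally.

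\textbf{A further simplification.} Combining Lemma \ref{sqfree-lemma} with $\ln N_n\ge\omega_n\ln 2$ is just the trivial bound $G_n\ge\ln 2$. The factors $\omega_n^{\pm 1/\omega_n}$ and $(\ln 2)^{\pm 1/\omega_n}$ cancel exactly, so
$q_C(a_n,b_n,c_n;\alpha,1)\le \ln c_n/(\omega_n^{\alpha}\ln 2)$
holds for every triple. There are no $o(1)$ terms, and you do not need $\omega_n\to\infty$, which relied on $c_n\to\infty$ and was not stated.

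\textbf{What each route buys.} Your argument proves the statement exactly as written. It also shows that the Szpiro hypothesis plays no role in the upper-bound half of Theorem \ref{T-szpiro}. The paper's route only ties the result to the Frey-curve heuristic. It also claims an equality for the $\limsup$ that the inequality here does not need, and that the proof of Theorem \ref{T-szpiro} only establishes as an upper bound.

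\textbf{The product of $\limsup$s.} Your explicit treatment of this step is a point the paper skips. Because you bound by the positive constant $1/(\delta^{1/\gamma}\ln 2)$ rather than by $\limsup_n q_C$ itself, the case $\limsup_n\bigl(\omega_n^{(1-\gamma)/\gamma}\eta_n\bigr)=\infty$ makes the right-hand side $+\infty$ and is harmless. No $0\cdot\infty$ ambiguity can arise.
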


\begin{proof}
Since the DGM quality class satisfies $q_C(a_n, b_n, c_n; \alpha, 1) = \frac{\ln c_n}{\omega_n^\alpha G_n}$, per our definition of $G_n$, rearrangement gives $\ln c_n = q_C(a_n, b_n, c_n; \alpha, 1) \cdot \omega_n^\alpha \cdot G_n$. Therefore, by the definition of $A_n$ as in Lemma \ref{l_pack_primes}, we have $\ln N_n = \omega_n A_n$. Substitution into the standard quality in Def. \ref{D_standard_quality} gives
\[
    q_{s,n} = \frac{\ln (c_n)}{\ln (N_n)} = \frac{q_C(a_n, b_n, c_n; \alpha, 1) \cdot \omega_n^\alpha \cdot G_n}{\omega_n \cdot A_n}
\]
Hence, substitution of the packing efficiency $\eta_n$ immediately gives
\begin{align*}
    q_{s,n} = q_C(\cdot; \alpha, 1) \cdot \omega_n^{\alpha - 1} \cdot \eta_n.
\end{align*}
In the critical boundary, we assume that the prime count scales as $\omega_n = \delta (\ln c_n)^\gamma \bigl(1 + o(1)\bigr)$, such that $\alpha = 1/\gamma$. By Theorem \ref{T-szpiro}, the limit supremum of $q_C(a_n, b_n, c_n; 1/\gamma, 1)$ converges exactly to the boundary constant $\frac{1}{\delta^{1/\gamma} \ln 2}$. Hence, we have
\[
    \limsup_{n \to \infty} q_{s,n} \le \left( \frac{1}{\delta^{1/\gamma} \ln 2} \right) \cdot \limsup_{n \to \infty} \left( \omega_n^{1/\gamma - 1} \cdot \eta_n \right),
\]
as desired.

As an extension, if, additionally, the largest prime factor 
$P_n := P(N_n)$ satisfies $\ln P_n \in O(\ln c_n)$ (i.e., $P_n \sim c_n^\kappa$ 
for some $\kappa > 0$), then by Lemma~\ref{l_pack_primes}, we have $\eta_n = \mathcal{O}\!\left((\ln c_n)^{1/\omega_n - 1}\right),$ and hence $ \omega_n^{\frac{1-\gamma}{\gamma}}\cdot\eta_n 
=\mathcal{O}\left((\ln c_n)^{1/\omega_n - \gamma}\right) \ra 0$ as $n\to\infty$, since $\gamma > 0$. Consequently, we must have $ \limsup_{n\to\infty}\, q_{s,n}= 0.$
\end{proof}

Hence, our phase transitions build towards a correspondence between the \textit{standard} quality and the DGM quality class, mediated by the primes appearing in the factorization of families of triples.

\subsection{Sharp Packing Bounds}
  \label{S_packing_abc}

  The factorization of Theorem \ref{t-big} reduces at $\alpha=1$ to the exact identity
  $q_s = \eta\,q_{\mathrm{DGM}}$, with $\eta=G/A$ the packing efficiency of Lemma \ref{l_pack_primes}. Since $q_{\mathrm{DGM}}$ diverges (Theorem \ref{C_abc_DGM}) while $q_s$
  is bounded \textit{conjecturally}, the resulting comparison is governed by the decay of $\eta$; thus, extracting analytic content from this bound requires a lower bound on $\eta$ – which, in principle, serves as a reverse AM-GM inequality. We supply such a bound via the Specht ratio, which gives a restatement of the $abc$-conjecture in terms of the DGM quality.

  \begin{lemma}[Specht Implications]\label{T_specht}
  Let $(a,b,c)$ be an $abc$-triple with $\omega\ge 2$ and maximal prime divisor $P$,
  and set $h:=\ln(P)/\ln (2)$ and $S(h) = \frac{(h-1)\,h^{1/(h-1)}}{e\ln h}$ as the Specht ratio, for $h > 1$; we define $S(1) = 1$. Then, $q_s(\cdot) = \eta\,q_{\mathrm{DGM}}(\cdot)$ with $1/S(h)\le \eta\le 1$, and therefore
  \[
      q_s(a,b,c)\;\le\; q_{\mathrm{DGM}}(a,b,c)\;\le\; S(h)\,q_s(a,b,c).
  \]
  Observe $S(h)=\frac{h}{e\ln h} \cdot (1+o(1))$ for large $h$.
  \end{lemma}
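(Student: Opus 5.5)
The plan is to reduce everything to a reverse AM--GM inequality applied to the numbers $x_i := \ln(p_i)$, where $p_1,\dots,p_\omega$ are the primes dividing $abc$. First, I will record the identity $q_s = \eta\, q_{\mathrm{DGM}}$. By equation (\ref{eq3}), $q_{\mathrm{DGM}} = \ln(c)/(\omega G)$, where $G = (\prod_i x_i)^{1/\omega}$. Also $\ln(\mathrm{rad}(abc)) = \sum_i x_i = \omega A$. Dividing these gives
\[
q_s = \frac{\ln c}{\omega A} = \frac{G}{A}\cdot\frac{\ln c}{\omega G} = \eta\, q_{\mathrm{DGM}},
\]
which is the case $\alpha = 1$ of Theorem \ref{t-big}. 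The upper bound $\eta \le 1$ is AM--GM, exactly as in Lemma \ref{l_pack_primes}. It immediately yields $q_s \le q_{\mathrm{DGM}}$.

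For the lower bound on $\eta$, I will invoke Specht's theorem in Tominaga's form. If $x_1,\dots,x_\omega \in [m,M]$ with $0<m\le M$, then
\[
\frac{x_1+\cdots+x_\omega}{\omega} \le S(M/m)\,(x_1\cdots x_\omega)^{1/\omega},
\]
where $S(t) = \frac{(t-1)t^{1/(t-1)}}{e\ln t}$ for $t>1$ and $S(1)=1$. Every prime divisor satisfies $2 \le p_i \le P$, so each $x_i \in [\ln 2, \ln P]$. Taking $m = \ln 2$ and $M = \ln P$ gives $M/m = h$, hence $A \le S(h)\, G$, i.e.\ $\eta \ge 1/S(h)$.

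Since $\omega \ge 2$ forces $P \ge 3$, we have $h > 1$, so the convention $S(1)=1$ is never needed except as a harmless limit. If the smallest prime exceeds $2$, the true ratio $\ln P/\ln p_{\min}$ is smaller than $h$. I will then use that $S$ is increasing on $[1,\infty)$, which is part of the standard statement of Specht's ratio, or simply note that enlarging the interval $[m,M]$ keeps the hypothesis valid. Rearranging $q_{\mathrm{DGM}} = q_s/\eta \le S(h)\, q_s$ completes the two-sided inequality.

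Finally, for the asymptotic form, I will write $h^{1/(h-1)} = \exp(\ln h/(h-1)) = 1+o(1)$ and $(h-1)/h = 1+o(1)$ as $h\to\infty$. Together these give $S(h) = \frac{h}{e\ln h}(1+o(1))$.

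The only real obstacle is the correct statement and citation of Specht's reverse AM--GM with the exact constant $S(h)$, including its monotonicity in $h$. Once that is granted, the rest is a direct substitution into the identity from Theorem \ref{t-big}. If I needed a self-contained argument, I would first use convexity of $\ln$ to reduce to the extremal configuration in which every $x_i$ equals $m$ or $M$. I would then optimize over the fraction of the $x_i$ that equal $M$, which produces exactly the Specht expression.
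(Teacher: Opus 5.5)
Your proposal is correct and follows essentially the same route as the paper. Both arguments use the identity $q_s=\eta\,q_{\mathrm{DGM}}$ from the $\alpha=1$ case of Theorem \ref{t-big}, AM--GM for $\eta\le 1$, and Specht's reverse AM--GM applied to $\ln p_i\in[\ln 2,\ln P]$ to get $\eta\ge 1/S(h)$. Your version is slightly more careful in three places: enlarging the interval to $[\ln 2,\ln P]$ makes the monotonicity of $S$ unnecessary, you check that $\omega\ge 2$ forces $h>1$, and you verify the stated asymptotic via $h^{1/(h-1)}\to 1$.
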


  \begin{proof}
  The identity $q_s=\eta\,q_{\mathrm{DGM}}$ is the case $\alpha=1$ of Theorem \ref{t-big}, since $q_C(\cdot;1,1)=q_{\mathrm{DGM}}(\cdot)$. The upper
  bound $\eta\le 1$ is given by the AM-GM inequality, as noted in Lemma
  \ref{l_pack_primes}. For the lower bound, we apply Specht's inequality to the
  positive reals $\ell_i := \ln p_i$, which lie in $(\ln 2,\ln P)$ with maximal comparative ratio being bounded by $\ln(P)/\ln(2) = h$ by monotonicity \cite{specht1960theorie}. The arithmetic ($A$) and geometric ($G$) means of the $\ell_i$ satisfy $A/G \leq S(h)$, such that $\eta = G/A \ge 1/S(h)$. Substituting $\eta\in(1/S(h),1)$ into $q_s=\eta\,q_{\mathrm{DGM}}$ gives $q_s\le q_{\mathrm{DGM}}\le S(h)\,q_s$.
  \end{proof}

  \begin{theorem}[Packing-efficiency formulation of $abc$]\label{T_abc_eta}
  The $abc$-conjecture (Conjecture \ref{C_abc_conjecture}) is equivalent to the assertion that, for every $\varepsilon>0$,
  \[
      q_{\mathrm{DGM}}(a,b,c)\;\le\;\frac{1+\varepsilon}{\eta(a,b,c)}
  \]
  for all but finitely many $abc$-triples.
  
  Thus, the $abc$-conjecture implies that for every $\varepsilon>0$ and all but finitely many triples,
  \[
      q_{\mathrm{DGM}}(a,b,c)\;\le\;(1+\varepsilon)\,S\!\Bigl(\tfrac{\ln (P)}{\ln (2)}\Bigr)
      \;=\;(1+\varepsilon)\,\frac{\ln (P)}{e\ln (2) \cdot \,\ln\ln P}\,(1+o(1)),
  \]
  so that $q_{\mathrm{DGM}} \in \mathcal{O}\!\bigl(\ln P(abc)/\ln\ln P(abc)\bigr)$.
  \end{theorem}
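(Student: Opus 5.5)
The plan rests on the exact identity $q_s = \eta\,q_{\mathrm{DGM}}$ from Theorem \ref{t-big} at $\alpha=1$, which Lemma \ref{T_specht} records. The equivalence then becomes a pointwise algebraic rewriting, and the consequence follows by inserting the Specht lower bound on $\eta$. I will separate the triples with $\omega\ge 2$, where $\eta$ is defined as in Lemma \ref{l_pack_primes}, from the degenerate case $\omega\le 1$. Since $a+b=c$ with pairwise coprimality, $\omega=1$ forces $abc$ to be a prime power, which only happens for $(1,1,2)$. So the degenerate case contributes at most one triple and does not affect any ``all but finitely many'' statement.

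\textbf{Step 1 (equivalence).} For any $abc$-triple with $\omega\ge2$ we have $\eta>0$ by Lemma \ref{l_pack_primes}. Hence
\[
q_s(a,b,c)\le 1+\varepsilon \iff \eta\,q_{\mathrm{DGM}}(a,b,c)\le 1+\varepsilon \iff q_{\mathrm{DGM}}(a,b,c)\le \frac{1+\varepsilon}{\eta(a,b,c)}.
\]
This holds for each individual triple. So the exceptional set for one inequality is exactly the exceptional set for the other, up to the single degenerate triple. Conjecture \ref{C_abc_conjecture}, phrased as ``finitely many triples with $q_s>1+\varepsilon$ for each $\varepsilon>0$,'' is therefore literally equivalent to the stated assertion. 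A minor point to handle is the strict-versus-nonstrict boundary, $q_s>1+\varepsilon$ against $\le$. This is harmless: the conjecture quantifies over all $\varepsilon>0$, so finiteness for $\varepsilon$ implies finiteness for every $\varepsilon'>\varepsilon$, and conversely.

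\textbf{Step 2 (consequence).} Assuming $abc$, Step 1 gives $q_{\mathrm{DGM}}\le(1+\varepsilon)/\eta$ for all but finitely many triples. Lemma \ref{T_specht} gives $1/\eta\le S(h)$ with $h=\ln P/\ln 2$, which yields $q_{\mathrm{DGM}}\le(1+\varepsilon)S(\ln P/\ln 2)$. For the asymptotic form, substitute $h=\ln P/\ln 2$ into $S(h)=\frac{h}{e\ln h}(1+o(1))$, using $\ln h=\ln\ln P-\ln\ln 2=\ln\ln P\,(1+o(1))$. This produces $\frac{\ln P}{e\ln 2\,\ln\ln P}(1+o(1))$, and hence the $\mathcal{O}(\ln P/\ln\ln P)$ statement.

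\textbf{Main obstacle.} The main obstacle is the Specht step, not the algebra. I need to confirm that Specht's inequality applies with ratio $h=\max\ell_i/\min\ell_i$, and that $S$ is increasing in $h$ on $(1,\infty)$. Then the true ratio $\ln p_{\max}/\ln p_{\min}\le \ln P/\ln 2$ may be replaced by $h$. My approach is to cite monotonicity of the Specht ratio from \cite{specht1960theorie}; failing that, I would verify $\frac{d}{dh}\ln S(h)>0$ directly.

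I also need the $o(1)$ to be uniform as $P\to\infty$. This is fine, because for triples with bounded $P$ only finitely many $c$ occur: there are finitely many $S$-unit solutions for each fixed $S$ (Győry), so such triples fall into the finite exceptional set. For the remaining triples $P\to\infty$ along any infinite family.
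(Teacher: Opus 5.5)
Your proposal is correct and follows essentially the same route as the paper: the pointwise identity $q_s=\eta\,q_{\mathrm{DGM}}$ makes the equivalence immediate, and the Specht bound $1/\eta\le S(\ln P/\ln 2)$ together with $S(h)=\frac{h}{e\ln h}(1+o(1))$ gives the consequence. The extra care points you flag are harmless and in fact largely unnecessary. No strict-versus-nonstrict issue arises, since $q_s\le 1+\varepsilon$ is exactly the negation of $q_s>1+\varepsilon$. Likewise, Specht's inequality only requires every $\ln p_i$ to lie in $[\ln 2,\ln P]$, so monotonicity of $S$ is not needed.
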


  \begin{proof}
  By Lemma \ref{T_specht}, $q_s=\eta\,q_{\mathrm{DGM}}$ exactly; thus, the inequalities $q_s\le 1+\varepsilon$ and $q_{\mathrm{DGM}}\le(1+\varepsilon)/\eta$ are equivalent over each $abc$-triple. The former inequality holds for all but finitely many triples precisely when $\limsup_{c\to\infty}q_s(a, b, c) \le 1$, which establishes the equivalence. Now, let us assume $abc$. Then for all but finitely many triples, $q_s(a,b,c) \le 1+\varepsilon$, and using $\eta\ge 1/S(h)$ from Lemma \ref{T_specht},
  \[
      q_{\mathrm{DGM}} (a, b, c)=\frac{q_s (a, b, c)}{\eta (a, b, c)}\le\frac{1+\varepsilon}{\eta (a, b, c)}\le(1+\varepsilon)\,S(h),
  \]
  with the asymptotic result following from the bound on $S(h)$ for large $h$. Since $P\le c$ forces $\ln P \leq \ln c$, the bound $q_{\mathrm{DGM}} \in \mathcal{O}(\ln P/\ln\ln P)$ follows.
  \end{proof}

\subsection{Power-Mean Interpolation}\label{S_qt_interpolation}

As a last piece of analysis on the DGM quality itself, we note that the standard quality $q_s$ and the DGM quality $q_{\mathrm{DGM}}$ are the endpoints of a one-parameter family that is obtained by replacing the geometric mean in the denominator of $q_{\mathrm{DGM}}$ with a generalized power mean (effectively, a $p$-norm). Indeed, for $t\in[0,1]$, let us set $\kappa =1-t$ and let $M_{\kappa}$ be the power mean of the set $\{\ln p_i\}_{i=1}^\omega$, such that
\[
M_{\kappa}(\{\ln p_i\}) = \begin{cases}\bigl(\tfrac{1}{\omega}\sum_i(\ln p_i)^{\kappa} \bigr)^{1/\kappa} & 0<\kappa \le 1\\[2pt] \sqrt[\omega]{\prod_i\ln p_i} & \kappa =0,\end{cases}
\]
such that
\[
q_t(a,b,c) \;=\; \frac{\ln c}{\omega\cdot M_{1-t}(\{\ln p_i\})}.
\]
satisfies that $q_0 = q_s$ and $q_1 = q_{\mathrm{DGM}}$ (since $M_1$ recovers the arithmetic mean, and $M_0$ recovers the geometric mean). By Maclaurin's inequality, $M_{\kappa}$ is non-decreasing in ${\kappa}$, hence $q_t$ is non-decreasing in $t$ on every triple; this establishes $q_s\le q_{\mathrm{DGM}}$ pointwise.

We note, briefly, that for our $(\omega,k)$-families, as defined in Section \ref{S_pt}, specific bounds may be generated on the limiting value of $q_t$ on these families. Indeed, for $t \in [0, 1)$, the same methods as in Section \ref{S_pt} give that $q_t(a_n, b_n, c_n) \ra 2^{1/(1 - t)}/2$ on Mersenne triples $a_n, b_n, c_n$ (which have $(\omega, k) = (2, 1)$), and $q_t(a_n, b_n, c_n) \ra (3/2)^{1/(1 - t)}/3$ on twin prime triples (which have $(\omega, k) = (3, 2)$). In general, we have $q_t(a_n, b_n, c_n) \ra (\omega/k)^{1/(1 - t)}/\omega$ for all such $\omega, k, t$, and corresponding families.\footnote{Explicitly, this occurs since we have $k$ primes growing as $\Theta(\ln(c))$ while the other $\omega - k$ primes are bounded, such that $(M_{\kappa}(\ln(\{p_i\}))^{\kappa} = 1/\omega (k (\ln(c))^{\kappa} + O(1))$, and $M_{\kappa} (\ln(\{p_i\})) \sim (k/\omega)^{1/\kappa} \ln(c)$. Substitution into $q_t(\cdot)$ yields the limiting form.} Hence, divergence only occurs as $t \ra 1$ from below; this occurs since, for $\kappa > 0$, $M_{\kappa}$ is dominated by the largest term $(\ln(c))^{\kappa}$, except at $\kappa = 0$.

One might consider the question of when $\limsup_{c \ra \infty} q_t(a, b, c)$ is attainable for specific $t \in (0,1)$ and choices of triples. To attain this, we may consider triples $(1, p - 1, p)$, such that the number of primes $\omega$ diverges while the primes dividing $p-1$ themselves grow slowly. For instance, for triples on which $\omega(p-1)\sim\ln\ln p$ and $P(p-1)$ is bounded by $p^{\delta(p)}$ for some function $\delta$, the formulation of $q_t(\cdot)$ gives
\begin{equation*}
q_t(1,p-1,p) \;=\; \frac{\omega^{1/(1-t)-1}}{\bigl((\omega-1)\delta(p)^{1-t}+1\bigr)^{1/(1-t)} (1+o(1))};
\end{equation*}
thus, if $\delta$ grows as $\mathcal{O}(\omega^{-1/(1-t)})$, we have $q_t\sim(\ln\ln p)^{t/(1-t)}$, which diverges as $p \ra \infty$. The former condition requires $\delta(p) \ra 0$ at the rate 
$\delta(p)=(\ln\ln p)^{-1/(1-t)}(1+o(1))$, which appears to be beyond the strongest known unconditional results, which yield $P(p-1) \leq p^{0.5366}$ with $\omega$ bounded \cite{baker2014}.

%Conversely, the $abc$-conjecture itself gives \textit{upper} bounds on $q_t$ that depend on $\omega$ and $t$. By Maclaurin's inequality $M_{1-t}\le M_1=\mathrm{AM}$, and concentrating mass on a single large prime maximizes $\mathrm{AM}/M_{1-t}$; explicit computation yields
%\frac{M_1}{M_{1-t}} \;\le\; \omega^{t/(1-t)}
%for any $\omega$-tuple of positive reals $\ge\ln 2$. Combined with $q_s\le 1+\epsilon$ under abc:
On the other hand, the $abc$-conjecture itself gives \textit{upper} bounds on $q_t$, for fixed $\omega$.
\begin{theorem}\label{T_qt_UB}
Assuming the $abc$-conjecture, for every $\eps>0$, $t\in[0,1)$, $\omega_0\ge 2$, and $abc$-triple with $\omega(abc)=\omega_0$, we have
\[
\limsup_{c\to\infty,\,\omega=\omega_0} q_t(a,b,c) \leq (1+\eps) \cdot \omega_0^{t/(1-t)}.
\]
\end{theorem}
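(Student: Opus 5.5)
The plan is to factor $q_t$ through the standard quality $q_s$, in the same way that Theorem \ref{t-big} and Lemma \ref{T_specht} factor $q_{\mathrm{DGM}}$ through $q_s$, and then to bound the remaining ratio of power means by an elementary inequality that depends only on $\omega$.

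Write $\kappa = 1-t \in (0,1]$ and $\ell_i := \ln p_i > 0$. Since $\ln N = \sum_i \ell_i = \omega M_1(\{\ell_i\})$, the definition of $q_t$ gives the exact identity
\[
q_t(a,b,c) \;=\; \frac{\ln c}{\omega M_\kappa(\{\ell_i\})} \;=\; q_s(a,b,c)\cdot \frac{M_1(\{\ell_i\})}{M_\kappa(\{\ell_i\})}.
\]
For $t=0$ the ratio equals $1$, so the claim is just Conjecture \ref{C_abc_conjecture}. For $t \in (0,1)$ it remains to show the reverse power-mean bound
\[
\frac{M_1}{M_\kappa} \;\le\; \omega^{1/\kappa - 1} \;=\; \omega^{t/(1-t)}.
\]

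To prove this bound, I would use the monotonicity of $\ell^p$ quasi-norms: for $0<\kappa\le 1$ and nonnegative $x_i$ one has
\[
\Bigl(\sum_i x_i^\kappa\Bigr)^{1/\kappa} \;\ge\; \sum_i x_i.
\]
This follows from concavity or subadditivity of $u\mapsto u^\kappa$: normalize so that $\sum_i x_i = 1$; then each $x_i\le 1$ gives $x_i^\kappa \ge x_i$, so $\sum_i x_i^\kappa \ge 1$. Applying it to the $\ell_i$ gives
\[
M_\kappa \;=\; \omega^{-1/\kappa}\Bigl(\sum_i \ell_i^\kappa\Bigr)^{1/\kappa} \;\ge\; \omega^{-1/\kappa}\sum_i \ell_i \;=\; \omega^{1-1/\kappa} M_1,
\]
which is exactly the bound on the ratio. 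Equality is approached when a single $\ell_i$ dominates, consistent with the Mersenne computation preceding the theorem. The pointwise inequality $q_t \le \omega^{t/(1-t)} q_s$ therefore holds for every triple, with no asymptotics involved.

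Finally, assume the $abc$-conjecture and fix $\varepsilon>0$. All but finitely many $abc$-triples satisfy $q_s \le 1+\varepsilon$. Restricting to $\omega(abc)=\omega_0$, every such triple with $c$ sufficiently large then satisfies $q_t \le (1+\varepsilon)\,\omega_0^{t/(1-t)}$, and taking the $\limsup$ gives the stated bound. The only step with real content is the reverse power-mean inequality, and it is elementary. The point needing care is the direction of the inequality: Maclaurin's inequality gives $M_\kappa \le M_1$, whereas here we need the opposite comparison, weakened by the factor $\omega^{1-1/\kappa}$. This is also why the bound depends on fixing $\omega_0$, since the constant $\omega^{t/(1-t)}$ is unbounded as $\omega \to \infty$.
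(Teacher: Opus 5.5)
Your proposal is correct and follows the paper's route: the exact identity $q_t = q_s\cdot M_1/M_{1-t}$, an $\ell^1$ versus $\ell^{1-t}$ comparison giving $M_1/M_{1-t}\le\omega_0^{t/(1-t)}$, and then $q_s\le 1+\varepsilon$ from the $abc$-conjecture for all but finitely many triples. The inequality you prove, $\sum_i \ell_i \le \bigl(\sum_i \ell_i^{\kappa}\bigr)^{1/\kappa}$ with no extra factor, is the right unnormalized form. The paper's displayed inequality carries a spurious factor $\omega_0^{t/(1-t)}$, and after dividing by $\omega_0$ that version would only yield $M_1\le\omega_0^{2t/(1-t)}M_{1-t}$, so your derivation of the key bound is the cleaner of the two.
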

\begin{proof}
Fix $\eps > 0$, $t \in (0,1)$. Then, we know $q_t(a, b, c) = q_s(a, b, c) \cdot (M_1/(M_{1 - t}))$ by definition. Additionally, we have $M_1/M_{1-t} \le \omega_0^{t/(1-t)}$, 
since applying the $(\ell^1-\ell^{1 - t})$-norm inequality to the vector 
$(\ln p_1, \ldots, \ln p_{\omega_0})$ gives
\[
    \sum_{i=1}^{\omega_0} \ln p_i \;\le\; \omega_0^{t/(1-t)} 
    \left(\sum_{i=1}^{\omega_0}(\ln p_i)^{1-t}\right)^{1/(1-t)};
\]
dividing both sides by $\omega_0$ yields $M_1 \le \omega_0^{t/(1-t)} M_{1-t}$. 
Therefore, combined with the $abc$-conjecture giving $q_s \le 1 + \eps$, the bound 
$q_t \le (1+\eps)\cdot\omega_0^{t/(1-t)}$ follows. Note that, in particular, for bounded $\omega$, $q_t(\cdot)$ is bounded by an explicit function of $t$, increasing pointwise as $t \ra 1$.
\end{proof}

\subsection{Experimental Evaluation}
\label{Data_class}

We present selected experimental results corresponding to the DGM quality class. Figure \ref{Class_1} compares $q_C(\cdot;\alpha,1)$ for $\alpha\in\{0,0.5,1\}$. Starred points are triples $(a, b, c_0; \cdot)$ with higher quality than any others with the same $c_0$ entry. We note $\alpha=0.5$ privileges triples of moderate roundness. Table \ref{Class_merge} demonstrates \textit{record-setting triples} $(1, b, c_0)$; that is, triples with strictly larger quality than any triples with $c < c_0$. We note that decreasing $\alpha$ empirically gives strictly smaller record-setting triples. Finally, Table \ref{Class_8_short} demonstrates that increasing $\beta$ produces nested subsets of record-setting triples, when restricted to the form $(1, b, c)$.

\begin{figure}[t]
\centering
\includegraphics[width=0.7\columnwidth]{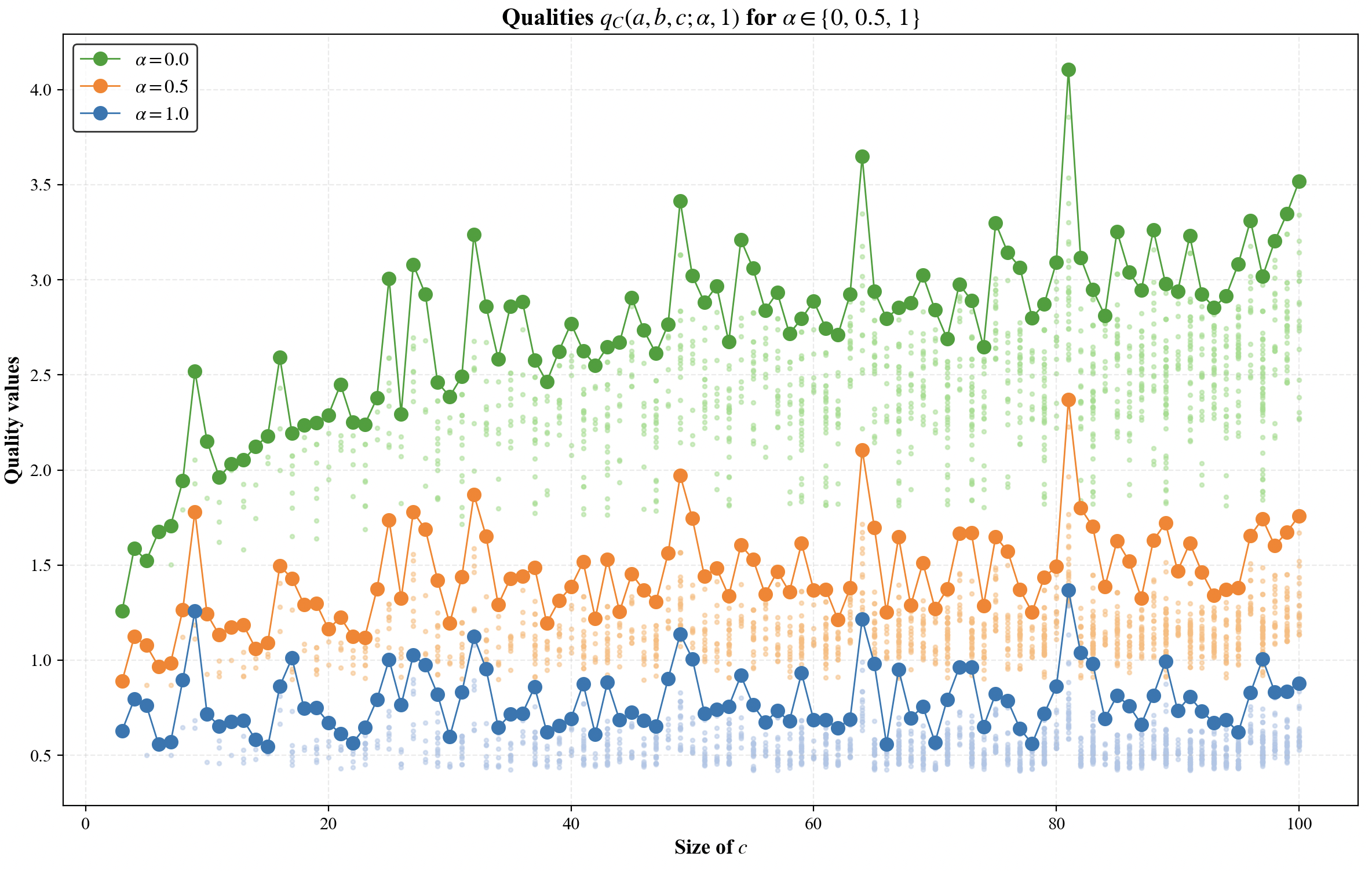}
\caption{Illustration of $q_C(a, b, c;\alpha,1)$ for $\alpha=0,0.5,1$, with highest-qualities for a given $c$ interpolated. \label{Class_1}}
\end{figure}

\begin{table}[t]
\centering\small
\begin{tabular}{|r|r|c|c|c|}
\hline
$b$ & $c$ & $q_C(\cdot; 0,1)$ & $q_C(\cdot; 0.5,1)$ & $q_C(\cdot; 1,1)$ \\
\hline
8     & 9     & 2.518 & 1.792 & 1.259 \\
80    & 81    & 4.106 & 2.678 & 1.369 \\
242   & 243   & 4.494 & 3.017 & 1.498 \\
512   & 513   & 4.768 & 3.239 & 1.589 \\
4374  & 4375  & 6.746 & 4.253 & 1.687 \\
8191  & 8192  & ---   & ---   & 1.803 \\
62207 & 62208 & ---   & ---   & 1.810 \\
65536 & 65537 & ---   & ---   & 2.000 \\
131071 & 131072 & --- & ---   & 2.062 \\
\hline
\end{tabular}
\caption{Record-setting triples $(1,b,c)$for $c\le 200000$ and $\beta=1$; triples that are no-longer record-setters are ---.}
\label{Class_merge}
\end{table}

\begin{table*}[t]
\centering\small
\begin{tabular}{|r|r|c|c|c|c|c|c|c|}
\hline
$b$ & $c$ & $\beta=0.75$ & $\beta=1$ & $\beta=1.25$ & $\beta=1.5$ & $\beta=1.75$ & $\beta=2$ & $\beta=3$ \\
\hline
2     & 3     & 1.217 & 1.259 & 1.303 & 1.348 & 1.394 & 1.443 & 1.653 \\
8     & 9     & 2.434 & 2.518 & 2.605 & 2.695 & 2.789 & 2.885 & 3.307 \\
80    & 81    & 4.177 & 4.106 & 4.037 & 3.970 & 3.903 & 3.837 & 3.586 \\
675   & 676   & 5.257 & 4.894 & 4.556 & 4.241 & 3.948 & ---   & ---   \\
2400  & 2401  & 6.613 & 6.264 & 5.932 & 5.619 & 5.322 & 5.040 & 4.056 \\
4374  & 4375  & 7.123 & 6.746 & 6.390 & 6.052 & 5.732 & 5.429 & 4.369 \\
71874 & 71875 & 8.015 & 7.172 & 6.418 & ---   & ---   & ---   & ---   \\
\hline
\end{tabular}
\caption{Record-setting triples $(1,b,c)$, $c\le 200{,}000$, for varying $\beta$ at $\alpha=1$. Larger $\beta$ yields strictly smaller record-setting sets of triples, as triples that are no longer record-setters are given ---.}
\label{Class_8_short}
\end{table*}
\iffalse
\section{Incorporating Smoothness}
\label{S_Smoothness}

\subsection{Motivation and Definition}

Some DGM-Class metrics admit large primes in their record triples (such triples are not \emph{smooth}). We divide by the largest prime divisor, raised to a power $\psi$, to penalize large primes.

\begin{definition}[Smooth DGM Quality Class]\label{D_Smooth_Class}
Let $\alpha\ge 0$, $\beta>0$, $\psi\ge 0$. With $P,\omega,U$ as in Def.\ \ref{D_DGM_class} and $p_{\max}=\max(P)$,
\[
q_{SC}(a,b,c;\alpha,\beta,\psi) = \frac{q_C(a,b,c;\alpha,\beta)}{p_{\max}^\psi}.
\]
\end{definition}

For $\psi=0$, $q_{SC}=q_C$. For $\psi\ge 0.5$, smoothness dominates; for $\psi>1$, quality values shrink rapidly. The metric $q_{SC}(\cdot;0.5,1,0.25)$ exhibits asymptotic behavior qualitatively similar to the standard quality $q_s$ (see Figure \ref{Smooth_2}).
\fi

\section{Triple-Finding Algorithms}
\label{S_Algorithms}

Within this section, we briefly present approaches to the calculation of high-quality triples for the DGM quality; we hope that our methods, particularly in Section \ref{S_cyclotomic}, might generalize to other manners of determining high-quality triples computationally for variant quality measures. We set the maximum value of $c$ for triples $(a, b, c)$ to be $h$, such that its digits are bounded as $d = \lfloor \log_{10}(h)\rfloor + 1$.

\subsection{Brute Force Algorithm}

The naïve approach enumerates all $abc$-triples with $c\le h$ and $a\le\lfloor c/2\rfloor$; computing the resulting prime factorizations allows for the determination of $q_{\text{DGM}}(a, b, c)$.

\begin{algorithm}[!t]
\caption{Brute Force Method}\label{A_Naive}
\begin{algorithmic}[1]
\For{$c=3$ to $h$}
  \For{$a=1$ to $\lfloor c/2\rfloor$ with $\gcd(a,c)=1$}
    \State $b\gets c-a$
    \State $\text{primes}\gets \bigcup_{i \in \{a, b, c\}} \textsc{primefact}(i)$
    \State Compute $q_{\mathrm{DGM}}(a,b,c)$ via \eqref{eq3}
    \State Determine if $q_{\mathrm{DGM}}(a, b, c)\ge q_{\min}$
  \EndFor
\EndFor
\end{algorithmic}
\end{algorithm}
For each $c_0\ge 3$, the number of triples $(a,b,c_0)$ with $a\le\lfloor c_0/2\rfloor$ is $\varphi(c_0)/2$. Summing via the totient summatory function $\Phi(n)=\sum_{k=1}^n\varphi(k)\sim n^2/(2\zeta(2))=3n^2/\pi^2$, the total triple count is $(\Phi(h)-2)/2>0.15h^2$, asymptotically \cite{hardywright}. Each triple requires three prime factorizations; hence, even with sieve methods for factorization, the algorithm is $\Omega(10^d)$. Our implementation struggled to find triples with $q_{\mathrm{DGM}} > 1.6489$, with $h > \mathcal{O}(10^6)$ being infeasible. Hence, \textit{families} of triples provide significantly more tractable methodologies, as below.

\subsection{Power of $p,q$ Algorithm}
Motivated by the power of $2, 3$ family in Section \ref{S_DGM_Quality}, we introduce the following algorithm.

\begin{algorithm}[!t]
\caption{Power of $p,q$ Method}\label{A_p_q}
\begin{algorithmic}[1]
\State $i_{\max} \gets \lceil\log_p(h/2)\rceil$, $j_{\max} \gets \lceil\log_q(h/2)\rceil$
\For{$i=1$ to $i_{\max}$, $j=1$ to $j_{\max}$}
    \State $a \gets p^i$, $b \gets q^j$, $c \gets a+b$
    \If{$\textsc{IsPrime}(c)$}
        \State Compute $q_{\mathrm{DGM}}(a,b,c)$ using \eqref{eq3}
        \State Determine if $q_{\mathrm{DGM}} \ge q_{\min}$
    \EndIf
\EndFor
\end{algorithmic}
\end{algorithm}
The total number of triples considered in this manner is $i_{\max}\cdot j_{\max}=O(\log^2 h)=O(d^2)$, which is polynomial in $d$; however, each triple requires a factorization of $c$ to compute the quality, which remains exponential in $d$. The algorithm remains costly, since testing with the AKS primality test remains $O(d^6)$; we observed that $h=10^9$ remains tractable, such that triples with $q_{\mathrm{DGM}}(a, b, c)>10$ may be determined \cite{agrawal2004primes}.

\subsection{Mersenne Prime Algorithm}

Motivated by the consideration of families of triples, Algorithm \ref{A_2_3} uses the 51 known Mersenne primes; by Theorem \ref{T_3_4}, no factorizations are required \cite{mersennelist}.

\begin{algorithm}[!t]
\caption{Mersenne Prime Algorithm}\label{A_2_3}
\begin{algorithmic}[1]
\State $n_{\max}\gets\log_2(h)$
\State $\text{MersenneList}\gets\text{precomputed}$
\For{each $n\in\text{MersenneList}$ with $n\le n_{\max}$}
  \State $b\gets 2^n-1$;\quad $c\gets 2^n$
  \State $q_{\mathrm{DGM}}\gets \sqrt{n}/2$ \Comment{Theorem \ref{T_3_4}}
  \If{$q_{\mathrm{DGM}}\ge q_{\min}$}
    \Return $(1,b,c,q_{\mathrm{DGM}})$
  \EndIf
\EndFor
\end{algorithmic}
\end{algorithm}

This algorithm remains $O(d)$, and finds Mersenne triples with up to 24 million digits and qualities of up to $\approx 4543.95$, as motivated by the high-quality considerations. Our methodology, however, can be generalized using the cyclotomic methodology that follows.

\subsection{Cyclotomic Algorithm}\label{S_cyclotomic}

A theoretical limitation of Algorithm \ref{A_2_3} is that it restricts to a single, highly restricted family. However, Theorem \ref{T_phase_transition} predicts that there exists an infinite family exhibiting distinct asymptotics with respect to critical exponent $\beta_0$, each for a corresponding $(\omega,k)$; thus, we may replace the base $b = 2$ in the Mersenne prime construction with another base $b \in \{2, 3, 5, 6, 7, 10, 11, \dotsc\} = \Z - \mathcal{P}$, where $\mathcal{P}$ is the subset of prime powers in $\Z$. We now present a new algorithm that exploits this, motivated by the cyclotomic structure of $\Z_{b^n-1}$.

The theoretical basis of our algorithm is as follows. For any base $b \ge 2$, $b^n - 1 = \prod_{d\mid n} \Phi_d(b)$, where $\Phi_d$ is the $d$-th cyclotomic polynomial. By Zsigmondy's theorem, for every $n>1$ (with the exception of $b=2,n=6$) the factor $\Phi_n(b)$ has a primitive prime divisor (that is, $p$ such that $p$ does not divide $b^m-1$ for any $m<n$). Consequently, $\omega(b^n - 1) \ge |\{d: d \mid n, \Phi_d(b) > 1\} \gtrsim \tau(n)$, for the divisor function $\tau(n)$. Thus, triples of the form $(1,b^n-1,b^n)$ that minimize $\omega$ arise when $n$ has few divisors; thus, the minimum is attained when $\Phi_n(b)$ is itself prime and $n\in\mathbb{P}$ (so that $b^n-1=(b-1)\Phi_n(b)$). For $b > 2$, this yields the set of \emph{generalized repunit primes} $(b^n-1)/(b-1)$ that are given in the Cunningham project \cite{mersennelist}. Since the Cunningham project provides precomputed factorizations of $b^n\pm 1$ for thousands of exponents $n$ and $b \neq 12$, which give the values of $\omega(b^n-1)$ in $O(1)$ time, the per-triple cost of evaluating $q_{\mathrm{DGM}}$ is $O(1)$, identically to Algorithm \ref{A_2_3}. Thus, the asymptotic complexity of this algorithm remains $O(d)$, although $b$ may itself vary   33.

\begin{algorithm}[!t]
\caption{Cyclotomic Algorithm\label{A_cyclo}}
\hspace*{\algorithmicindent}\textbf{Input:} $h$, base list $B\subseteq\{2,3,5,6,7,\dots\} \subset \Z - \mathcal{P}$.
\begin{algorithmic}[1]
\For{$b\in B$}
  \State $n_{\max}\gets\lfloor\log_b h\rfloor$
  \State $T_b\gets\textsc{CunninghamTable}(b)$ \Comment{known $\omega(b^n-1)$}
  \For{$n$ with $1\le n\le n_{\max}$ and $n\in T_b$}
    \State $a\gets 1$;\quad $b'\gets b^n-1$;\quad $c\gets b^n$
    \State $\{\omega,\text{primes}\}\gets T_b[n]$ \Comment{$O(1)$ lookup}
    \State Compute $q_{\mathrm{DGM}}$ via \eqref{eq3}
    \If{$q_{\mathrm{DGM}}\ge q_{\min}$}
      \State return $(1, b^n - 1,b^n , q_{\mathrm{DGM}})$
    \EndIf
  \EndFor
\EndFor
\end{algorithmic}
\end{algorithm}

\begin{table*}[t]
\centering
\begin{tabular}{|l|c|c|c|c|}
\hline
\textbf{Property} & \textbf{Alg.\ \ref{A_Naive} (Brute)} & \textbf{Alg.\ \ref{A_p_q} ($p,q$)} & \textbf{Alg.\ \ref{A_2_3} (Mers.)} & \textbf{Alg. \ref{A_cyclo} \textbf{(Cyclo.)}} \\
\hline
Highest $q_{\mathrm{DGM}}$ & 1.6849 & 10.0424 & 4543.9502 & $14060$ \\
Digits of $c$ in largest triple & 5 & 9 & 24{,}000{,}000 & 1{,}068{,}673 \\
Largest computed $c$ & $10{,}000$ & $924{,}291{,}401$ & $2^{82{,}589{,}933}$ & $7^{1{,}264{,}699}$ \\
Median time of execution & 16.41\,s & 0.77\,s & 0.00168\,s & $0.005\,$s \\
Fact. ($F)$/primality test. $(P)$ & $C\cdot 10^{2d} (F)$ & $C\cdot d^2 (P)$ & $0$ & $0$\\
Complexity & $O(10^d)$ & $O(d^{\tau}), \tau \leq 8$ & $O(d)$ & $O(|B|d)$ \\
\hline
\end{tabular}
\caption{Algorithm performance comparison (executed on Python 3.8.2 with a 1.1\,GHz CPU).\label{Alg_tab}}
\end{table*}

We note that, for $b = 2$, Algorithm \ref{A_cyclo} reproduces Algorithm \ref{A_2_3};
however, for $b = 3$, the analogous family corresponds to base-3 generalized repunit
primes (OEIS A028491). By Theorem \ref{T_phase_transition} with $(\omega,k)=(3,1)$,
these triples are critical at $\beta_0=3$ and give high-quality triples in the
regime where Mersenne triples have already saturated, due to a larger value of
$\omega/k$. More generally, every base $b\in\{3,5,6,7,10,11,\dots\}$ in the Cunningham
project (OEIS A028491, A004061, A004062, A004063, A004023, A005808) yields a
single-large-prime family with $\omega\in\{3,4\}$ and $k=1$, for which $q_{\mathrm{DGM}}$
grows asymptotically as $ q_{\mathrm{DGM}}(a, b, c) \sim \mathcal{O}((\ln c)^{(\omega-1)/\omega})$, which is strictly greater than the $\sqrt{\ln c}/2$ rate for Mersenne triples (which correspond to $\omega = 2$).
Thus, with respect to optimizing for the DGM Quality \textit{Class}, Algorithm
\ref{A_cyclo} populates a strictly larger portion of the $(\alpha,\beta)$-plane with
high-quality DGM triples, at the same $O(d)$ asymptotic cost. Evaluating
$q_{\mathrm{DGM}}$ on the largest currently known generalized-repunit prime in OEIS
A004063 – that is, base $b=7$ at $n=1{,}264{,}699$, with $\omega=4$ and $(\omega-1)/\omega = 3/4$ –
yields $q_{\mathrm{DGM}}\approx 14{,}060$, exceeding the largest Mersenne quality
$\sqrt{82{,}589{,}933}/2 \approx 4{,}544$, despite using a triple $(a, b, c)$ with $c$ having roughly $65$ times fewer digits.

To corroborate the $(\omega, k)$-phase transition analogy, we also may evaluate the behavior of the quality metric on such families for $\alpha = 1, \beta = 2$. Given Theorem \ref{T_phase_transition}, the Mersenne family saturates precisely at this choice, since $\omega/k = 2$ for all triples in this family; hence $q_C(a, b, c; 1, 2)$ limits to $1/(4 \ln(2)) \approx 0.36$ for triples in the Mersenne family. However, using base $6$ in Algorithm \ref{A_cyclo} (with $\omega = 3,$ such that $\beta_0 = 3$ as in Theorem \ref{T_phase_transition}) gives triples with $q_C(a, b, c; 1, 2) \approx 62.4$, base $7$ gives triples of quality $\approx 80.4$; the empirical growth of the latter family is $q_C(a, b, c; 1, 2) \sim \mathcal{O}(n^{1/2})$. 

In Table \ref{Alg_tab}, we summarize the performance of all four algorithms. The $O(d)$ complexity of the Mersenne and cyclotomic methods, together with the pure number of Mersenne triples and known generalized repunit primes, generate an advantage over Algorithms \ref{A_Naive},\ref{A_p_q}, with $\operatorname*{arg\,max}_{(a, b, c)} q_{\text{DGM}} (a, b, c)$ over our triples of consideration resulting from the base-$7$ triple discussed previously. 

\subsection{Parametric Hulls over the DGM Class}

As extensional material, we observe that the DGM class from Section \ref{S_DGM_Class} provides a continuum of metrics indexed by $(\alpha,\beta)$, such that the highest-quality triples depend on \textit{where} in this continuum of metrics the current choice of $q_{C}(\cdot; \alpha, \beta)$ lies. 
For instance, a triple ($a_0, b_0, c_0)$ of particularly high-quality – and potentially higher-quality than any triples $(a, b, c)$ with $c < c_0$ - at $\beta=1$ need not be so at $\beta=2$, as the nested record sets of Tables \ref{Class_merge}-\ref{Class_8_short} demonstrate. 
Hence, rather than rescanning the pool of triples at each parameter value, we determine high-quality triples using a structural feature of the DGM class. We write $G=(\prod_i\ln p_i)^{1/\omega}$ as in Section \ref{S_DGM_Class} and invoke equation \eqref{eq_primefact}, such that

  \[
      \ln (q_C(a,b,c;\alpha,\beta))=\ln(\ln (c)) -\beta\ln (G) -(\alpha\beta)\ln\omega,
  \]
  which, in the coordinates $(u,v):=(\beta, \alpha\beta)$, is a bilinear map in $(\alpha, \beta)$ with intercept $\ln(\ln (c))$ and gradient $(-\ln G,-\ln\omega)$. Thus, for a fixed triple $(a, b, c)$ (which fix $\omega, G$), variation of the parameters $\alpha, \beta$ over $\alpha \ge 0, \beta > 0$ gives a plane determined the corresponding parameters $\{u>0,\,v\ge 0\}$ (the image of the set $\{\beta>0,\,\alpha\ge 0\}$ under the bijection $\alpha \gets v/u, \beta \gets u$). If such a plane is embedded in a space homeomorphic to $\R^3$ with the \textit{height} of a point on the plane corresponding to the value of $ \ln (q_C(a, b, c; \alpha, \beta))$ for fixed $(a, b, c)$ and varying $\alpha, \beta$, then comparing two triples at a specific $(\alpha, \beta)$ amounts to comparing the vertical heights of the resulting planes.

  Determining \textit{optimal} choices of triples for each set of parameters $(\alpha, \beta)$ follows from this geometric construction. Since maximizing $q_C$ is functionally equivalent to maximizing
  $\ln (q_C)$, the highest attainable log-quality over a set $\mathscr{T} := \{T_i\}$ of $n$ $abc$-triples $(a_i, b_i, c_i)$ is the pointwise maximum
  \[
      F(u,v) := \max_i\bigl(\ln(\ln (c_i)) -u\ln G_i-v\ln\omega_i\bigr),
  \]
  which is precisely the upper envelope of the $n$ planes. The triple that realizes such a maximum at the point $(u,v)$ is therefore a \textit{record-setter} for the DGM quality (i.e., its quality is higher than that of any triple with smaller $c$) for such an $(\alpha,\beta)$ pair. A triple whose plane lies below the envelope everywhere is never optimal, since if the triple $T_i$ is maximized at no choice of parameter, then for every
  $(\alpha,\beta)$ some $T_j \in \mathscr{T}$ satisfies $\ln (q_C(T_j; \alpha, \beta))>\ln (q_C(T_i; \alpha, \beta))$, so $T_i$ is dominated
  within the class. The record-setting triples are therefore exactly
  those contributing a face to the broad envelope; equivalently, such triples form the vertices of the lower convex hull of the points $(\ln G_i,\ln(\omega_i),\ln\ln c_i)$. This method generates a complete set of record-setters that  supersedes the case-by-case analysis exhibited in scenarios such as Tables \ref{Class_merge}–\ref{Class_8_short}.

  Moreover we observe that projecting such a record-setting envelope onto the $(u,v)$-plane partitions the parameter region into convex domains, one per record-setting triple, across which the \textit{optimal} triple to maximize the resulting $q_C(\cdot, u, v)$ remains constant. We note that the boundaries of each domain are the precise loci at which such a record changes; these boundaries are the counterpart of the transitions in Tables \ref{Class_merge}–\ref{Class_8_short}. For triples $i,j$, the boundary between the corresponding region in the $(u, v)$ domain is given by $T_i$ and $T_j$ is
  the implicit form
  \[
      \ln\ln c_i-\ln\ln c_j=u\,(\ln (G_i)-\ln (G_j))+v\,(\ln(\omega_i)-\ln(\omega_j)),
  \]
  which corresponds to a hyperbola in the $(\alpha, \beta)$-plane. Finally, we note that the regions mirror the phase transitions from Thm. \ref{T_phase_transition}. Along an $(\omega,k)$-family, the asymptotics $G\sim K^{1/\omega}(\ln c)^{k/\omega}$ from the result give
  \[
      \ln q_C(a,b,c;\alpha,\beta)=\Bigl(1-\tfrac{k\beta}{\omega}\Bigr)\ln(\ln (c))
          -\alpha\beta\ln(\omega) -\frac{\beta}{\omega}\ln (K),
  \]
  such that the phase of the family has a critical exponent $\beta_0 = \omega/k$ is precisely the parameter at which the corresponding plane for the $(\omega, k)$-family drops out of the upper envelope in the geometric picture.

  Computationally, the upper envelope of $n$ planes has combinatorial complexity $O(n)$ and is
  obtained in $O(n\log n)$ time as a three-dimensional convex hull, after which the record-setter for each $(\alpha,\beta)$ or $(u, v)$ follows by planar point location in $O(\log n)$ \cite{deberg2008}. A single such computation resolves the entire parameter plane at once, whereas a naive sweep costs $O(n)$ per parameter value. We emphasize that this formalism is solely an organizing \textit{principle} rather than a generator of high-quality triples, as it consumes the output of any of Algorithms \ref{A_p_q}-\ref{A_cyclo} and returns the complete, exact record structure of the triples over $(\alpha,\beta)$, together with the minimal set of triples that are themselves optimal.

\iffalse
\section{Future Extensions to Other Quality Measures}
\label{S_Future}
\subsection{Divisor Quality}
\begin{definition}\label{D_DQ}\end{definition}
\subsection{Harmonic Mean Quality}
\begin{definition}\label{D_HQ}\end{definition}
\fi

\section{Conclusion}
We defined novel quality metrics that prioritize triples with small $\omega$, using the Doubly Geometric Mean (Def. \ref{D_DGM_Quality}). We analyzed the resulting quality metric to determine high-quality triples and asymptotics, developing characterizations of families yielding high-quality triples, including the Mersenne family (Theorem \ref{T_3_4}), the twin prime family (Theorem \ref{T_twin_DGM}), and fixed sequences of primes (Theorem \ref{T_main}). We also invoked Chen's Theorem (Thm. \ref{T_Chen}) to develop a proof of the asymptotic analogue of the $abc$-conjecture for our quality metric (Theorem \ref{T_Chen_DGM}). We also developed linear-time algorithms for determination of high-quality triples (Algs. \ref{A_2_3}, \ref{A_cyclo}).

We extended our quality metrics via a parametrization through the DGM quality class in Sec. \ref{S_DGM_Class} to modulate asymptotic behavior. Upon establishing phase transition results that determined asymptotics with respect to to parameter $\beta$ (Theorems \ref{T_phase_transition}, \ref{T_omega2_trichotomy}, \ref{T_uncond_LB}, \ref{T_cond_UB}), with various degrees of control on families of $abc$-triples and $\omega$ itself, we extended such results to develop asymptotics motivated by Szpiro bounds and relationships to the standard quality in the $abc$-conjecture itself (Theorems \ref{T-szpiro}, \ref{t-big}). A full proof of \ref{Smooth_class_ABC_Analogue} remains open, and finer results regarding bounds on the class $q_C(\cdot, \alpha, \beta)$ may even offer insight towards proofs of the conjecture itself, given the asymptotic bounds in Theorem \ref{t-big}.

Our unconditional resolution of the $abc$-analogue via Chen's theorem (Theorem \ref{T_Chen_DGM}) illustrates a principle that the DGM modification brings the analogue of the conjecture within reach of existing sieve methods, suggesting that the standard $abc$-conjecture itself may lie on a continuum of quality metrics whose more tractable members can be unconditionally settled, via the critical $\beta_0 = \omega/k$ measuring the exact damping required for each family. We view our phase-transition framework as evidence that families of modified quality metrics may be the natural objects of study, such that finer analysis of phase-space parameters $(\alpha, \beta)$, with the relevant factorization of the standard quality $q_s$, may yield further progress toward the $abc$-conjecture itself.

\bibliographystyle{unsrt}
\bibliography{numbertheory}

@article{elkies2007,
  title={{The ABC’s of number theory}},
  author={Elkies, Noam D},
  journal={The Harvard College Mathematics Review},
  volume={1},
  number={1},
  pages={57-76},
  year={2007}
}

@inproceedings{waldschmidt2015,
  title={{Lecture on the $abc$ conjecture and some of its consequences}},
  author={Waldschmidt, Michel},
  booktitle={Mathematics in the 21st Century: 6th World Conference, Lahore, March 2013},
  pages={211-230},
  year={2015},
  organization={Springer}
}

@misc{desmit2022,
  title={{ABC Triples}},
  author={de Smit, Bart},
  year={2020},
  howpublished={\url{https://pub.math.leidenuniv.nl/~smitbde/abc/}}
}

@article{nitaj1993-an,
  title={{An algorithm for finding good $abc$-examples}},
  author={Nitaj, Abderrahmane},
  journal={Comptes rendus de l'Acad{\'e}mie des sciences. S{\'e}rie 1, Math{\'e}matique},
  volume={317},
  number={9},
  pages={811-815},
  year={1993}
}

@article{nitaj1993-algorithms,
  title={{Algorithms for finding good examples for the $abc$ and Szpiro conjectures}},
  author={Nitaj, Abderrahmane},
  journal={Experimental Mathematics},
  volume={2},
  number={3},
  pages={223-230},
  year={1993},
  publisher={Taylor \& Francis}
}

@misc{nitaj2022,
  title={{The $abc$ conjecture home page}},
  author={Nitaj, Abderrahmane},
  year={2010},
  url={https://web.archive.org/web/20000819203144/http://www.math.unicaen.fr/~nitaj/abc.html}
}

@phdthesis{horst2010,
  title={{Finding ABC-triples using elliptic curves}},
  author={van der Horst, J},
  year={2010},
  type={Masters thesis},
  school={Universiteit Leiden}
}

@article{granville2002,
  title={{It’s as easy as $abc$}},
  author={Granville, Andrew and Tucker, Thomas},
  journal={Notices of the AMS},
  volume={49},
  number={10},
  pages={1224-1231},
  year={2002}
}

@article{oesterle1988,
  title={{Nouvelles approches du “th{\'e}oreme” de Fermat}},
  author={Oesterl{\'e}, Joseph},
  journal={Ast{\'e}risque},
  volume={161-162},
  pages={165-186},
  year={1988}
}

@inproceedings{masser1985,
  title={{Open problems}},
  author={Masser, David W},
  booktitle={Proceedings of the symposium on Analytic Number Theory, London, 1985},
  year={1985},
  organization={Imperial College}
}

@article{stewart2001,
  title={{On the $abc$ conjecture, II}},
  author={Stewart, Cameron L and Yu, Kunrui},
  year={2001},
  journal={Duke 
  Mathematical Journal}, volume={108}, number={1}, pages={169--181}, year={2001}, publisher={Duke University Press}
}

@article{martin2016,
  title={{$abc$ triples}},
  author={Martin, Greg and Miao, Winnie},
  journal={Functiones et Approximatio Commentarii Mathematici},
  volume={55},
  number={2},
  pages={145-176},
  year={2016},
  publisher={Adam Mickiewicz University, Faculty of Mathematics and Computer Science}
}

@article{wiles1995,
  title={{Modular elliptic curves and Fermat's last theorem}},
  author={Wiles, Andrew},
  journal={Annals of mathematics},
  volume={141},
  number={3},
  pages={443-551},
  year={1995},
}

@book{mason1984,
  title={{Diophantine equations over function fields}},
  author={Mason, Richard C},
  volume={96},
  year={1984},
  publisher={Cambridge University Press}
}

@article{stothers1981,
  title={{Polynomial identities and Hauptmoduln}},
  author={Stothers, W Wilson},
  journal={The Quarterly Journal of Mathematics},
  volume={32},
  number={3},
  pages={349-370},
  year={1981},
  publisher={Oxford University Press}
}

@article{browkin1994,
  title={{Some remarks on the $abc$-conjecture}},
  author={Browkin, Jerzy and Brzezi{\'n}ski, Juliusz},
  journal={Mathematics of Computation},
  volume={62},
  number={206},
  pages={931-939},
  year={1994}
}

@article{deweger1989,
  title={{Algorithms for Diophantine equations}},
  author={De Weger, Benne MM},
  journal={CWI tracts},
  volume={65},
  year={1989},
  publisher={Centrum voor wiskunde en informatica}
}

@article{tsang2010,
  title={{Fermat numbers}},
  author={Tsang, Cindy and Stein, William},
  journal={University of Washington},
  year={2010}
}

@article{mersenne,
  title={Mersenne primes: History, theorems and lists},
  author={Caldwell, Chris K},
  journal={Prime Pages},
  year={2014},
  url={https://primes.utm.edu/mersenne/}
}

@misc{mersennelist,
  title={{Great Internet Mersenne Prime Search (GIMPS). List Of Known Mersenne Prime Numbers}},
  year={2022},
  url={https://www.mersenne.org/primes/}
}

@article{pomerance1981,
  title={{Recent developments in primality testing}},
  author={Pomerance, Carl},
  journal={The Mathematical Intelligencer},
  volume={3},
  number={3},
  pages={97-105},
  year={1981},
  publisher={Springer}
}

@book{hardywright,
  title={{An introduction to the theory of numbers}},
  author={Hardy, Godfrey Harold and Wright, Edward Maitland and others},
  year={1979},
  publisher={Oxford university press}
}

@inproceedings{gnfs,
  title={{Factoring integers with the number field sieve}},
  author={Buhler, Joe P and Lenstra, Hendrik W and Pomerance, Carl},
  booktitle={The development of the number field sieve},
  pages={50-94},
  year={1993},
  organization={Springer}
}

@article{hardy1923some,
  author    = {Hardy, Godfrey H. and Littlewood, John E.},
  title     = {Some problems of `Partitio Numerorum'; III: On the expression of a number as a sum of primes},
  journal   = {Acta Mathematica},
  volume    = {44},
  number    = {1},
  pages     = {1-70},
  year      = {1923},
  publisher = {Springer},
  doi       = {10.1007/BF02403921}
}

@article{Chen1973,
  title = {On the representation of a large even integer as the sum of a prime and the product of at most two primes},
  author = {Chen, Jing Run},
  journal = {Scientia Sinica},
  volume = {16},
  pages = {157-176},
  year = {1973},
  publisher = {Science Press}
}

@article{mihailescu2004primary,
  author    = {Mih{\u{a}}ilescu, Preda},
  title     = {Primary cyclotomic units and a proof of {C}atalan's conjecture},
  journal   = {Journal f{\"u}r die reine und angewandte Mathematik (Crelles Journal)},
  volume    = {2004},
  number    = {572},
  pages     = {167-195},
  year      = {2004},
  publisher = {Walter de Gruyter},
  doi       = {10.1515/crll.2004.048},
  issn      = {0075-4102}
}

@book{vojta1987,
  title={Diophantine approximations and value distribution theory},
  author={Vojta, Paul},
  series={Lecture Notes in Mathematics},
  volume={1239},
  year={1987},
  publisher={Springer-Verlag},
  address={Berlin, New York},
  doi={10.1007/BFb0072989}
}

@article{gyory1979explicit,
  author    = {Gy{\H{o}}ry, K{\'a}lm{\'a}n},
  title     = {Explicit upper bounds for the solutions of some {D}iophantine equations},
  journal   = {Publicationes Mathematicae Debrecen},
  volume    = {26},
  number    = {1-2},
  pages     = {119-132},
  year      = {1979},
  publisher = {Institutum Mathematicum Universitatis Debreceniensis}
}

@book{silverman2009arithmetic,
  title     = {The Arithmetic of Elliptic Curves},
  author    = {Silverman, Joseph H.},
  series    = {Graduate Texts in Mathematics},
  volume    = {106},
  edition   = {2nd},
  year      = {2009},
  publisher = {Springer},
  address   = {Dordrecht},
  doi       = {10.1007/978-0-387-09494-6},
  isbn      = {978-0-387-09494-6},
  url       = {https://link.springer.com/book/10.1007/978-0-387-09494-6}
}

@article{frey1986links,
  author   = {Frey, Gerhard},
  title    = {Links between stable elliptic curves and certain {D}iophantine equations},
  journal  = {Annales Universitatis Saraviensis. Series Mathematicae},
  volume   = {1},
  number   = {1},
  pages    = {1-40},
  year     = {1986}
}

@article{szpiro1981proprietes,
  author    = {Szpiro, Lucien},
  title     = {Propri{\'e}t{\'e}s num{\'e}riques du faisceau dualisant relatif},
  journal   = {Ast{\'e}risque},
  volume    = {86},
  pages     = {44-78},
  year      = {1981},
  publisher = {Soci{\'e}t{\'e} math{\'e}matique de France},
  zbl       = {0517.14006}
}

@article{baker2014,
  author    = {Baker, Roger C. and Harman, Glyn},
  title     = {Shifted primes without large prime factors},
  journal   = {Acta Arithmetica},
  volume    = {83},
  number    = {4},
  pages     = {331-361},
  year      = {1998},
  doi       = {10.4064/aa-83-4-331-361}
}

@article{agrawal2004primes,
  title={PRIMES is in P},
  author={Agrawal, Manindra and Kayal, Neeraj and Saxena, Nitin},
  journal={Annals of Mathematics},
  pages={781-793},
  year={2004},
  publisher={JSTOR},
  doi={10.4007/annals.2004.160.781}
}

@article{specht1960theorie,
  author    = {Specht, Wilhelm},
  title     = {Zur Theorie der elementaren Mittel},
  journal   = {Mathematische Zeitschrift},
  volume    = {74},
  number    = {1},
  pages     = {91-98},
  year      = {1960},
  publisher = {Springer},
  doi       = {10.1007/BF01180630},
  language  = {german}
}

@book{deberg2008,
  title = {Computational Geometry: Algorithms and Applications},
  author = {de Berg, Mark and Cheong, Otfried and van Kreveld, Marc and Overmars, Mark},
  edition = {3rd},
  year = {2008},
  publisher = {Springer},
  address = {Berlin, Heidelberg},
  doi = {10.1007/978-3-540-77974-2},
  isbn = {978-3-540-77974-2}
}

\end{document}